\theoremstyle{plain}
\newtheorem{theorem}{Theorem}[section]
\newtheorem{proposition}[theorem]{Proposition}
\newtheorem{lemma}[theorem]{Lemma}
\newtheoremstyle{clm}
  {}
  {}
  {\itshape}
  {}
  {\rmfamily}
  {\@.}
  { }
  {}
\theoremstyle{clm}
\newtheorem{clm}{Claim}
\numberwithin{clm}{theorem}
\newcommand{\clmqed}[1]{\renewcommand{\qedsymbol}{$\square_\text{Claim~\ref{#1}}$}}
\theoremstyle{definition}
\newtheorem{definition}[theorem]{Definition}
\newtheorem{example}[theorem]{Example}
\newtheorem{remark}[theorem]{Remark}
\newcommand{\AAA}{\ensuremath{\mathbb{A}}}
\newcommand{\IN}{\ensuremath{\mathbb{N}}}
\newcommand{\ZZ}{\ensuremath{\mathbb{Z}}}
\newcommand{\nset}[1]{\ensuremath{[{#1}]}}
\newcommand{\Nout}[2][]{\ensuremath{N_\mathrm{o}^{#1}({#2})}}
\newcommand{\GA}[1]{\ensuremath{\mathbb{A}({#1})}}
\newcommand{\divides}{\ensuremath{\mid}}
\newcommand{\card}[1]{\ensuremath{\lvert{#1}\rvert}}
\newcommand{\Htt}[1][t,t']{\ensuremath{\ifthenelse{\equal{#1}{}}{H}{H_{#1}}}}
\newcommand{\Mtt}[1][t,t']{\ensuremath{\ifthenelse{\equal{#1}{}}{M}{M_{#1}}}}
\newcommand{\Ltt}[1][t,t']{\ensuremath{\ifthenelse{\equal{#1}{}}{L}{L_{#1}}}}
\newcommand{\Ytt}[1][t,t']{\ensuremath{\ifthenelse{\equal{#1}{}}{Y}{Y_{#1}}}}
\newcommand{\Ztt}[1][t,t']{\ensuremath{\ifthenelse{\equal{#1}{}}{Z}{Z_{#1}}}}
\newcommand{\Deltatt}[1][t,t']{\ensuremath{\ifthenelse{\equal{#1}{}}{\Delta}{\Delta_{#1}}}}
\newcommand{\OMEGAtt}[1][t,t']{\ensuremath{\ifthenelse{\equal{#1}{}}{\Omega}{\Omega_{#1}}}}
\newcommand{\omegatt}[1][t,t']{\ensuremath{\ifthenelse{\equal{#1}{}}{\omega}{\omega_{#1}}}}
\newcommand{\Lambdatt}[1][t,t']{\ensuremath{\ifthenelse{\equal{#1}{}}{\Lambda}{\Lambda_{#1}}}}
\newcommand{\lambdatt}[1][t,t']{\ensuremath{\ifthenelse{\equal{#1}{}}{\lambda}{\lambda_{#1}}}}
\newcommand{\MG}[1][G]{\ensuremath{\ifthenelse{\equal{#1}{}}{M}{M_{#1}}}}
\newcommand{\PG}[1][G]{\ensuremath{\ifthenelse{\equal{#1}{}}{P}{P_{#1}}}}
\newcommand{\EG}[1][G]{\ensuremath{\ifthenelse{\equal{#1}{}}{E}{E_{#1}}}}
\newcommand{\OG}[1][G]{\ensuremath{\ifthenelse{\equal{#1}{}}{O}{O_{#1}}}}
\newcommand{\ZG}[1][G]{\ensuremath{\ifthenelse{\equal{#1}{}}{Z}{Z_{#1}}}}
\newcommand{\BG}[1][G]{\ensuremath{\ifthenelse{\equal{#1}{}}{B}{B_{#1}}}}
\newcommand{\omegaG}[1][G]{\ensuremath{\ifthenelse{\equal{#1}{}}{\omega}{\omega_{#1}}}}
\newcommand{\lambdaG}[1][G]{\ensuremath{\ifthenelse{\equal{#1}{}}{\lambda}{\lambda_{#1}}}}
\newcommand{\thn}[2]{\ensuremath{D_{#1}(#2)}}
\newcommand{\loopedge}{\ensuremath{\raisebox{-1.4pt}{\rotatebox{90}{$\circlearrowleft$}}}}
\DeclareMathOperator{\lcm}{lcm}
\DeclareMathOperator{\var}{var}
\begin{document}
\title[Associative spectra of graph algebras I]{Associative spectra of graph algebras I. \\ Foundations, undirected graphs, antiassociative graphs}

\author{Erkko Lehtonen}

\address[E. Lehtonen]%
   {Technische Universit\"at Dresden \\
    Institut f\"ur Algebra \\
    01062 Dresden \\
    Germany
    \and
    Centro de Matem\'atica e Aplica\c{c}\~oes \\
    Faculdade de Ci\^encias e Tecnologia \\
    Universidade Nova de Lisboa \\
    Quinta da Torre \\
    2829-516 Caparica \\
    Portugal}

\author{Tam\'as Waldhauser}

\address[T. Waldhauser]%
   {University of Szeged \\
    Bolyai Institute \\
    Aradi v\'ertan\'uk tere 1 \\
    H-6720 Szeged \\
    Hungary}

\begin{abstract}
Associative spectra of graph algebras are examined with the help of homomorphisms of DFS trees.
Undirected graphs are classified according to the associative spectra of their graph algebras; there are only three distinct possibilities: constant $1$, powers of $2$, and Catalan numbers.
Associative and antiassociative digraphs are described, and associative spectra are determined for certain families of digraphs, such as paths, cycles, and graphs on two vertices.
\end{abstract}

\maketitle


\section{Introduction}

Associativity is a fundamental property of binary operations, and one tends to take
it for granted, since the most frequently encountered operations are associative.
However, there are also many noteworthy operations that are not associative, such as
subtraction, cross product of vectors, implication, just to name a few.
For a systematic study of phenomena related to (non)associativity, one may
consider an arbitrary nonempty set $A$ together with a binary operation
$ x\cdot y$ on $A$. Let us emphasize that we denote the operation as multiplication
only for notational convenience; the operation can be \emph{any} map
$A \times A \to A,\ (x,y) \mapsto x \cdot y$. This yields the algebraic structure
$\AAA = (A; {\cdot})$, called a \emph{groupoid} (note that the term groupoid has
a different meaning in category theory).

Given such a groupoid, there are several ways of measuring how far our operation
is from being associative. For finite $\AAA$, a natural ``measure of nonassociativity"
is the number of triples $(a,b,c) \in A^3$ such that 
$(a \cdot b) \cdot c \neq a \cdot (b \cdot c)$. This notion was studied by  
A.~C.~Climescu \cite{Climescu-1947} as early as 1947, and later by T.~Kepka and M.~Trch 
in a long series of papers starting with \cite{KepkaTrch-1992}.
Another option is to count the minimum number of changes one has to make in the operation
table in order to make it associative \cite{KepkaTrch-1993}.

B.~Cs\'{a}k\'{a}ny suggested a third method, namely to look at how many of the 
identities that are consequences of associativity are (not) satisfied. 
If the operation is associative, then there is no need to use parentheses in a product 
$x_1 \cdot x_2 \cdot \ldots \cdot x_n$, as the result will be the same anyway,
but if the operation is not associative, then one must insert $n-2$ pairs of 
parentheses in order to make the product unambiguous.
The \emph{Catalan numbers} $C_{n-1} = \frac{1}{n} \binom{2n-2}{n-1}$ give the number
of ways of inserting parentheses (or round brackets) meaningfully, and each such 
\emph{bracketing} induces an $n$-variable function $A^n \to A$. For associative 
binary operations all these $n$-ary functions will be the same, but for arbitrary
operations we may get as many as $C_{n-1}$ functions. 
The \emph{associative spectrum} of $\AAA$ is the sequence 
$\{s_n(\AAA)\}_{n=1}^\infty$ that counts the number of different $n$-ary functions 
on $A$ arising from bracketings of the product $x_1 \cdot x_2 \cdot \ldots \cdot x_n$. 
If $\AAA$ is a semigroup (i.e., if $x \cdot y$ is associative), then $s_n(\AAA)=1$ for
all $n \in \IN$, and intuitively we can say that the faster the spectrum grows,
the less associative the operation is. 

The associative spectrum was introduced in 
\cite{CsaWal-2000}, and some basic properties and many examples of associative spectra
were presented. In particular, it was shown that the cross product and the implication
have a Catalan spectrum, hence they are as nonassociative as a binary operation can be.
We shall call such operations (groupoids) \emph{antiassociative}. The associative 
spectrum of the subtraction operation is given by $s_n=2^{n-2}$, thus subtraction
is somewhere between being associative and antiassociative. 
Examples of groupoids with constant and linear spectra were also given in \cite{CsaWal-2000}, furthermore, in \cite{LieWal-2009}
groupoids with polynomial spectra of arbitrary degrees were constructed.
It was also proved in \cite{LieWal-2009} that there exist a continuum of different
associative spectra (allowing infinite base sets, of course).
Similar questions were investigated in \cite{BraHobSil-2012,BraHobSil-2017,BraSil-2006},
where some of the earlier results were rediscovered  (with a different terminology).

In this paper we study associative spectra of certain  binary operations associated to graphs. Let us define a ``multiplication" on the vertices of a graph as follows: 
let $u \cdot v = u$ if there is an edge from $u$ to $v$ and let $u \cdot v = \infty$
otherwise (here $\infty$ is an external absorbing element). We define the arising 
\emph{graph algebras} more precisely in Section~\ref{sect prelim}, where we
also present the required background on bracketings and spectra.

For undirected graphs we obtain a full description of all possible associative spectra 
in Section~\ref{sect undirected}. It turns out that there are only three possibilities:
we have either $s_n=1$, $s_n=2^{n-2}$ or $s_n=C_{n-1}$. 
Note the sharp contrast between this result and the abundance of different 
(growth rates of) spectra presented in \cite{CsaWal-2000,LieWal-2009}.
In Theorem~\ref{thm:main-undirected} we also give explicit characterizations
of undirected graphs corresponding to each of the three spectra.

We determine antiassociative digraphs in Section~\ref{sect directed};
this together with the description of associative digraphs \cite{Poo-2000}
gives us at least a picture about the two extrema of the spectrum(!) of associative
spectra of digraphs. Finally, in Section~\ref{sect examples}
we compute the associative spectra of some concrete graphs such as cycles
and paths, and we also determine the spectra of graphs on two vertices.
A more detailed analysis of the associative spectra of general digraphs will be a topic of a forthcoming paper.

\section{Preliminaries}\label{sect prelim}

\subsection{General notation}

We denote by $\IN$ and $\IN_{+}$ the set of nonnegative integers and the set of positive integers, respectively.
For $a, b \in \IN$, let $[a,b] := \{i \in \IN \mid a \leq i \leq b\}$. (Thus $[a,b] = \emptyset$ if $a > b$.)
For $n \in \IN$, let $\nset{n} := [1,n] = \{1, \dots, n\}$.

\subsection{Directed graphs}

By a \emph{directed graph} (or \emph{digraph} or simply \emph{graph}) we mean a pair $G = (V, E)$, where $V = V(G)$ is a nonempty set of \emph{vertices} and $E = E(G) \subseteq V^2$ is a set of \emph{edges} (or the \emph{edge relation}).
A digraph $G' = (V', E')$ is a \emph{subgraph} of $G = (V,E)$ if $V' \subseteq V$ and $E' \subseteq E$; it is an \emph{induced subgraph} of $G$ if additionally $E' = E \cap (V' \times V')$.

If $e = (u,v) \in E$, then we say that $e$ is an edge from $u$ to $v$, and we sometimes denote this by $u \rightarrow v$.
In this case we also say that $u$ is an \emph{inneighbour} of $v$ and $v$ is an \emph{outneighbour} of $u$.
The \emph{outneigbourhood} of a vertex $u \in V(G)$, denoted by $\Nout[G]{u}$, is the set of all outneighbours of $u$ in $G$.
The concept of \emph{inneighbourhood} is defined analogously.
An edge of the form $(u,u)$ is called a \emph{loop} (on $u$) and sometimes denoted by $u\ \loopedge$.

A \emph{walk} of length $\ell$ from $u$ to $v$ in $G$ is a sequence $v_0, \dots, v_\ell$ of (not necessarily distinct) vertices such that $v_0 = u$, $v_\ell = v$, and there is an edge from each vertex to the next one (except for the last vertex, of course): $v_0 \rightarrow v_1 \rightarrow \dots \rightarrow v_\ell$.
If $v_0 = v_\ell$, then we say that the walk is \emph{closed.}
A \emph{path} (\emph{cycle}) is a (closed) walk in which the vertices are pairwise distinct (with the exception of the first and last vertex in case of a cycle).
A digraph without cycles is called \emph{acyclic.}

We say that a vertex $u$ is \emph{reachable} from $v$ if there exists a walk (equivalently, a path) from $v$ to $u$.
A pair of vertices $u$ and $v$ are said to be \emph{strongly connected} 
if each one of $u$ and $v$ is reachable from the other.
The relation of being strongly connected is an equivalence relation, and the induced subgraphs of its equivalence classes are called the \emph{strongly connected components} of $G$.
A digraph is \emph{strongly connected} if it has just one strongly connected component.
A one\hyp{}vertex graph with no edge is strongly connected (let us call this the \emph{trivial strongly connected graph});
apart from this trivial example, every vertex of a strongly connected digraph is contained in a cycle of nonzero length
(this includes the graph of one vertex with a loop on it).

A digraph with a symmetric edge relation is called an \emph{undirected graph.}
The strongly connected components of an undirected graph are called \emph{connected components.}
The \emph{underlying undirected graph} of a digraph $G = (V,E)$ is the undirected graph $(V,E')$, where the edge relation $E'$ equals the symmetric closure of $E$.

A \emph{tree} is an undirected graph in which any two vertices are connected by exactly one path.
A \emph{rooted directed tree} is a directed acyclic graph whose underlying undirected graph is a tree and that has a distinguished vertex, called a \emph{root}, from which all vertices are reachable.
Let $v$ be a vertex of a rooted directed tree $T$.
Unless $v$ is the root of $T$, it has a unique inneighbour, which is referred to as the \emph{parent} of $v$.
The outneighbours of $v$ are called \emph{children} of $v$.
A childless vertex is called a \emph{leaf.}
The vertices reachable from $v$ are called \emph{descendants} of $v$, and $v$ is called an \emph{ancestor} of any of its descendants.
The \emph{rooted induced subtree} of $T$ rooted at $v$, denoted by $T_v$, is the subgraph of $T$ induced by $v$ and all its descendants.

The \emph{depth} of a vertex $v$ in a rooted directed tree $T$ is the length of the (unique) path from the root to $v$, denoted by $d_T(v)$.
(Thus the root has depth $0$.)
The \emph{height} of $T$, denoted by $h(T)$, is the maximum of the depths of its vertices: $h(T) = \max \{d_T(v) \mid v \in V(T)\}$.

\subsection{Graph algebras}

Graph algebras were introduced by C.~R.~Shallon~\cite{Shallon}. We associate any digraph $G = (V,E)$ with an algebra $\GA{G} = (V \cup \{\infty\}; {\circ}, \infty)$ of type $(2,0)$, where $\infty$ is a new element distinct from the vertices, and the binary operation is defined by the following rule: for any $x, y \in V \cup \{\infty\}$,
\[
x \circ y :=
\begin{cases}
x, & \text{if $(x,y) \in E$,} \\
\infty, & \text{otherwise.}
\end{cases}
\]
The algebra $\GA{G}$ is called the \emph{graph algebra} of $G$.
Graph algebras provide a simple encoding of graphs as algebras, and
using this encoding, the algebraic properties of the graph algebra $\GA{G}$ can be seen as properties of the graph $G$ itself.

We are particularly interested in the satisfaction of identities by graph algebras.
Recall that a \emph{term} is, informally speaking, a well\hyp{}formed string comprising variables and function symbols from the language of algebras under consideration.
An \emph{identity} is an ordered pair $(t,t')$ of terms, usually written as $t \approx t'$.
An algebra $\mathbb{A}$ \emph{satisfies} an identity $t \approx t'$ if for all assignments of values to the variables occurring in $t$ and $t'$, the two terms get the same value when the function symbols are interpreted as the fundamental operations of $\mathbb{A}$.
An identity $t \approx t'$ is \emph{trivial} if $t = t'$.
Trivial identities are clearly satisfied by all algebras (of the given type).
For further details, see, e.g., \cite{DenWis}.

Let $t$ be a term in the language of graph algebras.
Denote by $\var(t)$ the set of variables occurring in $t$ and by $L(t)$ the leftmost variable occurring in $t$.
We say that $t$ is \emph{trivial} if it contains an occurrence of the constant symbol $\infty$; otherwise $t$ is \emph{nontrivial.}
Nontrivial terms are thus just groupoid terms.
To any nontrivial term $t$, we can associate a digraph $G(t) = (V,E)$, where $V = \var(t)$, and $(x_i, x_j) \in E$ if and only if $t$ has a subterm $(t_1 \circ t_2)$ with $L(t_1) = x_i$ and $L(t_2) = x_j$.

The following result is very helpful for determining whether a graph algebra satisfies an identity.

\begin{proposition}[{P\"{o}schel, Wessel \cite[Proposition~1.5(2)]{PosWes-1987}}]
\label{prop:phi}
Let $G = (V,E)$ be a digraph, and let $\GA{G}$ denote the corresponding graph algebra.
Let $t$ and $t'$ be nontrivial terms in the language of graph algebras, and assume that $\var(t) = \var(t')$ and $L(t) = L(t')$.
Then the following conditions are equivalent:
\begin{enumerate}[label=\upshape{(\roman*)}]
\item $\GA{G}$ satisfies $t \approx t'$;
\item for every map $\varphi \colon \var(t) \to V$, we have that $\varphi$ is a homomorphism of $G(t)$ into $G$ if and only if $\varphi$ is a homomorphism of $G(t')$ into $G$.
\end{enumerate}
\end{proposition}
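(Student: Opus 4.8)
The plan is to reduce everything to a single evaluation lemma that pins down the value of a nontrivial term in $\GA{G}$. Fix $G = (V,E)$ and a nontrivial term $t$, and for a map $\varphi \colon \var(t) \to V \cup \{\infty\}$ let $\varphi(t)$ denote the value obtained by interpreting $\circ$ as the operation of $\GA{G}$. I would first prove that $\varphi(t) = \varphi(L(t)) \in V$ whenever $\varphi$ is a homomorphism of $G(t)$ into $G$ (which in particular forces $\varphi(\var(t)) \subseteq V$), and $\varphi(t) = \infty$ in every other case. Intuitively this says that evaluating a product can never ``create'' a vertex other than the leftmost one: since $x \circ y \in \{x, \infty\}$ always, the value of any nontrivial term lies in $\{\varphi(L(t)), \infty\}$, and it avoids the absorbing element $\infty$ exactly when the assignment respects all the adjacencies recorded in $G(t)$.

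This lemma is proved by structural induction on $t$. The base case $t = x$ is immediate: $G(t)$ is a single isolated vertex, every map into $V$ is a homomorphism, and $\varphi(t) = \varphi(x)$. For the inductive step $t = t_1 \circ t_2$ I would use that $L(t) = L(t_1)$ together with the fact that the edge set of $G(t)$ is exactly $E(G(t_1)) \cup E(G(t_2)) \cup \{(L(t_1), L(t_2))\}$; this follows directly from the definition of $G(t)$, since the subterms of $t$ of the form $s_1 \circ s_2$ are $t$ itself and the corresponding subterms of $t_1$ and of $t_2$. Consequently $\varphi$ is a homomorphism of $G(t)$ into $G$ if and only if its restrictions to $\var(t_1)$ and $\var(t_2)$ are homomorphisms of $G(t_1)$ and $G(t_2)$ and, in addition, $(\varphi(L(t_1)), \varphi(L(t_2))) \in E$. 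Applying the induction hypothesis to $t_1$ and $t_2$ and then examining the outer product $\varphi(t) = \varphi(t_1) \circ \varphi(t_2)$ case by case yields the claim: if either factor already evaluates to $\infty$ then so does $t$ (here I use that $\infty$ is absorbing and that $(a, \infty) \notin E$ for every $a \in V$, which also covers assignments sending some variable to $\infty$); and if both factors equal their leftmost values $a = \varphi(L(t_1))$ and $b = \varphi(L(t_2))$, then $\varphi(t) = a \circ b$ equals $a = \varphi(L(t))$ precisely when $(a,b) \in E$.

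With the lemma in hand the proposition follows quickly. Since $\var(t) = \var(t')$ and $L(t) = L(t')$, for any $\varphi \colon \var(t) \to V \cup \{\infty\}$ the lemma computes both $\varphi(t)$ and $\varphi(t')$ as either the common value $\varphi(L(t))$ or $\infty$. These two values agree for every $\varphi$ precisely when, for every $\varphi$, $\varphi$ is a homomorphism of $G(t)$ into $G$ if and only if it is a homomorphism of $G(t')$ into $G$: if $\varphi$ is a homomorphism of exactly one of the two digraphs, then one side equals the vertex $\varphi(L(t)) \in V$ while the other equals $\infty$, so the identity fails, and in the remaining two cases both sides coincide. Finally, an assignment taking the value $\infty$ on some variable is a homomorphism of neither $G(t)$ nor $G(t')$ (a homomorphism must map into $V$) and makes both sides equal to $\infty$, so such assignments never distinguish the two terms. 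Hence the quantification over all $\varphi \colon \var(t) \to V \cup \{\infty\}$ may be replaced by quantification over $\varphi \colon \var(t) \to V$, which is exactly condition~(ii), establishing the equivalence with~(i).

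The only genuinely delicate point is the bookkeeping in the inductive step, where one must simultaneously track that the edge set of $G(t)$ splits as $E(G(t_1)) \cup E(G(t_2)) \cup \{(L(t_1), L(t_2))\}$, that $L(t) = L(t_1)$, and the behaviour of the absorbing element when one subterm evaluates to $\infty$ or some variable is sent to $\infty$; once the edge decomposition is correctly identified, the case analysis on the outer product is routine. Treating assignments that use $\infty$ is a minor but necessary subtlety, needed to confirm that enlarging the codomain from $V$ to $V \cup \{\infty\}$ leaves the homomorphism condition unaffected.
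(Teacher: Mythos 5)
The paper does not prove this proposition at all: it is imported verbatim from P\"oschel and Wessel \cite[Proposition~1.5(2)]{PosWes-1987} with a citation, so there is no in-paper argument to compare against. Your blind proof is correct and is essentially the standard argument behind the cited result. The evaluation lemma is exactly the right pivot: for a nontrivial term $t$ and any $\varphi \colon \var(t) \to V \cup \{\infty\}$, the value $\varphi(t)$ is $\varphi(L(t))$ if $\varphi$ is a homomorphism of $G(t)$ into $G$ and $\infty$ otherwise. Your inductive step rests on the edge decomposition $E(G(t_1 \circ t_2)) = E(G(t_1)) \cup E(G(t_2)) \cup \{(L(t_1),L(t_2))\}$, which does follow from the paper's definition of $G(t)$ (the binary subterms of $t_1 \circ t_2$ are $t$ itself and those of the two factors), and it remains valid even when $\var(t_1)$ and $\var(t_2)$ overlap --- worth keeping in mind, since the proposition is stated for arbitrary nontrivial terms, not just bracketings, so repeated variables are allowed and your homomorphism-restriction characterization survives that generality. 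You also handle the one genuinely delicate point correctly: assignments sending a variable to $\infty$ evaluate both sides to $\infty$ and are homomorphisms of neither tree, which justifies shrinking the quantification from $V \cup \{\infty\}$ to $V$ and yields condition~(ii) exactly, using $L(t) = L(t')$ to match the non-$\infty$ values. I see no gap.
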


\subsection{Associative spectra}

Let $B_n$ denote the set of \emph{bracketings} of size $n$, i.e., groupoid terms obtained from the string $x_1 \cdot x_2 \cdot \ldots \cdot x_n$ by inserting parentheses appropriately.
The number of bracketings of size $n$ is given by the $(n-1)$\hyp{}st Catalan number $C_{n-1} = \frac{1}{n} \binom{2n-2}{n-1}$.
If $\AAA = (A; {\cdot})$ is a groupoid, then the equational theory of $\AAA$ induces an equivalence relation $\sigma_n(\AAA)$ on $B_n$.
The sequence $\{\sigma_n(\AAA)\}_{n=1}^\infty$ is called the \emph{fine associative spectrum} of $\AAA$.
The \emph{associative spectrum} of $\AAA$ is the sequence $\{s_n(\AAA)\}_{n=1}^\infty$ of natural numbers defined by $s_n(\AAA) := \card{B_n / \sigma_n(\AAA)}$.
Equivalently, $s_n(\AAA)$ is the number of distinct term operations of $\AAA$ induced by the bracketings of size $n$.
Intuitively, the faster the associative spectrum grows, the less associative the operation is.
The groupoid $\AAA$ is a semigroup if and only if $s_n(\AAA) = 1$ for all $n \in \IN$.
On the other extreme we have the \emph{antiassociative groupoids} whose associative spectrum is given by the Catalan numbers: $s_n(\AAA) = \card{B_n} = C_{n-1}$.
These groupoids do not satisfy any nontrivial identity of the form $t_1 \approx t_2$ with $t_1, t_2 \in B_n$.

Since there exists only one bracketing of size $1$, namely $x_1$, and of size $2$, namely $(x_1 x_2)$, it is clear that $s_1(\AAA) = s_2(\AAA) = 1$ for every groupoid $\AAA$.
Therefore we may always assume that $n \geq 3$ when we consider bracketings of size $n$ or the $n$\hyp{}th component of an associative spectrum.

\subsection{DFS trees}

It turns out that the graphs associated with bracketings are particularly nice; they are rooted directed trees of a very special form.

\begin{figure}
\tikzset{every node/.style={circle,draw,inner sep=1.5,fill=black}, every path/.style={->,>=stealth,thick}, dfspath/.style={>->,>=stealth,thin,dotted}}
\scalebox{1}{
\begin{tikzpicture}[scale=1, baseline=(x1)]
\node[label={[label distance=1mm] below:{$x_1$}}] (x1) at (0,0) {};
\node[label={[label distance=1mm] below left:{$x_2$}}] (x2) at (-2,1) {};
\node[label={[label distance=1mm] above left:{$x_3$}}] (x3) at (-3,2) {};
\node[label={[label distance=1mm] above right:{$x_4$}}] (x4) at (-1,2) {};
\node[label={[label distance=0.7mm] above:{$x_5$}}] (x5) at (0,1) {};
\node[label={[label distance=1mm] below right:{$x_6$}}] (x6) at (1,1) {};
\node[label={[label distance=1mm] below right:{$x_7$}}] (x7) at (2,2) {};
\node[label={[label distance=0.7mm] above:{$x_8$}}] (x8) at (2,3) {};
\path (x1) edge (x2);
\path (x2) edge (x3);
\path (x2) edge (x4);
\path (x1) edge (x5);
\path (x1) edge (x6);
\path (x6) edge (x7);
\path (x7) edge (x8);
\draw [dfspath]
($(x1)+({180+atan(2)}:0.2)$)
-- ($(x2)+({180+atan(2)}:0.2)$)
arc ({180+atan(2)}:225:0.2)
-- ($(x3)+(225:0.2)$)
arc (225:45:0.2)
-- ($(x2)+(45:0.2)+(135:0.3)$)
arc (225:315:0.1)
-- ($(x4)+(135:0.2)$)
arc (135:-45:0.2)
-- ($(x2)+({(45-atan(0.5))/2}:{0.3/sin((45+atan(0.5))/2)})+(135:0.1)$)
arc (135:{180+atan(2)}:0.1)
-- ($(x1)+({90+(atan(2)/2)}:{0.3/sin(atan(2)/2)})+({180+atan(2)}:0.1)$)
arc ({180+atan(2)}:360:0.1)
-- ($(x5)+(180:0.2)$)
arc (180:0:0.2)
-- ($(x1)+(67.5:{0.25/sin(22.5)})+(180:0.05)$)
arc (180:315:0.05)
-- ($(x7)+(157.5:{0.3/sin(67.5)})+(315:0.1)$)
arc (315:360:0.1)
-- ($(x8)+(180:0.2)$)
arc (180:0:0.2)
-- ($(x7)+(0:0.2)$)
arc (0:-45:0.2)
-- ($(x1)+(315:0.2)+(225:0.1)$);
\end{tikzpicture}
}
\caption{The DFS tree of the bracketing $((x_1((x_2x_3)x_4))x_5)(x_6(x_7x_8))$.}
\label{fig:DFSpath}
\end{figure}
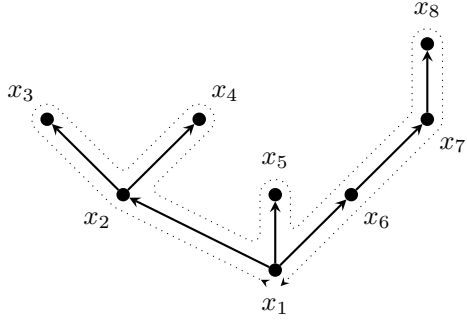

\begin{definition}
\label{def:DFS}
A \emph{DFS tree} of size $n$ is 
a rooted directed tree $T$ on the vertex set $X_n := \{x_1, x_2, \dots, x_n\}$ that has root $x_1$ and for every vertex $x_i \in X_n$, the induced subtree $T_{x_i}$ has vertex set of the form $X_{[i,i']} := \{x_j \mid j \in [i,i']\}$ for some $i' \in \nset{n}$ with $i' \geq i$.
\end{definition}

The name ``DFS tree'' stems from the fact that the vertices are labeled in an order in which they may be traversed by the depth\hyp{}first search (DFS) (see \cite[Section~22.3]{CLRS}) starting from the root.
Figure~\ref{fig:DFSpath} shows the DFS tree $G(t)$ of size $8$ that corresponds to the bracketing
$t=((x_1((x_2x_3)x_4))x_5)(x_6(x_7x_8))$. The dotted line shows the walk traversed by
the depth first search (using the convention that the search continues always with the leftmost
unvisited child). Note that the order of first occurrence of the vertices along this walk is
$x_1,x_2,\dots,x_8$.

\begin{lemma}[{cf.\ \cite[Theorem~22.7 (Parenthesis theorem)]{CLRS}}]
\label{lem:DFS}
Let $T$ be a rooted directed tree on $X_n$.
The following are equivalent.
\begin{enumerate}[label={\upshape (\roman*)}]
\item\label{lem:DFS:intervals}
$T$ is a DFS tree.
\item\label{lem:DFS:order} The sequence $x_1, x_2, \dots, x_n$ is a possible order in which the vertices of $T$ may be traversed by the depth\hyp{}first search starting from the root.
\end{enumerate}
\end{lemma}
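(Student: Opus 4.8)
The plan is to prove the two implications separately, treating \ref{lem:DFS:order}$\Rightarrow$\ref{lem:DFS:intervals} as a direct unwinding of the DFS recursion and \ref{lem:DFS:intervals}$\Rightarrow$\ref{lem:DFS:order} as the substantive direction, which rests on a ``tiling'' observation about how the subtrees of the children of a vertex partition its descendant set. First I would pin down the precise meaning of \ref{lem:DFS:order}: a depth-first search starting at $x_1$ processes a vertex by marking it discovered and then recursing into its children, in some order chosen at that vertex, exploring each child's rooted induced subtree completely before proceeding to the next child; the traversal order is the sequence of discovery events. Thus \ref{lem:DFS:order} asserts that there is a choice of a linear order of the children at each vertex for which the discovery sequence is exactly $x_1, x_2, \dots, x_n$.

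For \ref{lem:DFS:order}$\Rightarrow$\ref{lem:DFS:intervals} I would use the defining feature of the recursion: once $x_i$ is discovered, the search discovers precisely the vertices of $T_{x_i}$, and it does so in one uninterrupted block before returning to the parent of $x_i$, with no vertex outside $T_{x_i}$ discovered in the meantime. This is exactly the content of the parenthesis theorem and can also be seen by a short induction on the recursion. Since $x_i$ is the $i$-th vertex discovered and the $\card{V(T_{x_i})}$ vertices of $T_{x_i}$ are discovered consecutively starting with $x_i$ itself, the vertex set of $T_{x_i}$ must be $\{x_i, x_{i+1}, \dots, x_{i'}\}$ with $i' = i + \card{V(T_{x_i})} - 1$, which is the required interval condition.

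The heart of the argument is \ref{lem:DFS:intervals}$\Rightarrow$\ref{lem:DFS:order}. Assuming the interval condition, consider a vertex $x_i$ with $T_{x_i}$ on vertex set $X_{[i,i']}$. I would first establish the tiling claim: if the children of $x_i$ are $x_{j_1}, \dots, x_{j_k}$ with $j_1 < \dots < j_k$, then $j_1 = i+1$, the subtree $T_{x_{j_m}}$ has vertex set $X_{[j_m, j_{m+1}-1]}$ for $m < k$, and $T_{x_{j_k}}$ has vertex set $X_{[j_k, i']}$. Indeed, the subtrees of distinct children are disjoint and, together with $\{x_i\}$, cover $X_{[i,i']}$, so their vertex sets partition $X_{[i+1,i']}$; each such set is an interval $X_{[j_m, j_m']}$ by hypothesis, and disjoint intervals partitioning a single interval must line up consecutively and gap-free once sorted by their left endpoints $j_1 < \dots < j_k$, forcing $j_1 = i+1$, $j_m' + 1 = j_{m+1}$, and $j_k' = i'$. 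With the tiling claim in hand, I would order the children of every vertex by increasing index and prove, by induction on $\card{V(T_{x_i})}$, that the DFS started at $x_i$ discovers $x_i, x_{i+1}, \dots, x_{i'}$ in this order: after discovering $x_i$ the search recurses into $x_{j_1} = x_{i+1}$ and, by the induction hypothesis applied to the smaller subtree, traverses $X_{[i+1, j_2 - 1]}$ in increasing order, then recurses into $x_{j_2}$, and so on, concatenating the consecutive blocks to give exactly $x_i, \dots, x_{i'}$. Taking $i = 1$ yields the full traversal order $x_1, \dots, x_n$.

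I expect the main obstacle to be the tiling claim, specifically verifying cleanly that disjoint intervals whose union is itself an interval must be consecutive with the smallest one starting exactly at $i+1$; once this is in place, both the forward unwinding of the recursion and the concluding induction are routine.
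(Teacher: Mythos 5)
Your proposal is correct and follows essentially the same route as the paper's proof: the direction \ref{lem:DFS:order}$\Rightarrow$\ref{lem:DFS:intervals} is the same parenthesis-theorem observation, and for \ref{lem:DFS:intervals}$\Rightarrow$\ref{lem:DFS:order} your ``tiling claim'' is exactly the fact the paper asserts in passing ($i_1 = i+1$ and $i'_s = i_{s+1}-1$), followed by the same concatenation induction. The only differences are cosmetic: you induct on subtree size rather than height, and you spell out the interval-partition justification that the paper leaves implicit.
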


\begin{proof}
\ref{lem:DFS:intervals}~$\Rightarrow$~\ref{lem:DFS:order}:
Assume that $T$ satisfies condition \ref{lem:DFS:intervals}.
Condition \ref{lem:DFS:order} will follow if we prove that for each vertex $x_i \in X_n$, the vertices of the rooted induced subtree $T_{x_i}$ (by our assumption $V(T_{x_i}) = X_{[i,i']}$ for some $i' \geq i$) may be traversed by the depth\hyp{}first search in the order $x_i, x_{i+1}, \dots, x_{i'}$.
We proceed by induction on the height of subtrees.
The claim obviously holds for rooted induced subtrees of height $0$.
Assume that the claim holds for rooted induced subtrees of height at most $k$, and let $x_i \in X_n$ be a vertex such that $h(T_{x_i}) = k + 1$.
Let $x_{i_1}, x_{i_2}, \dots, x_{i_\ell}$ be the children of $x_i$ in $T$ with $i_1 < i_2 < \dots < i_\ell$.
By condition \ref{lem:DFS:intervals}, for each $s \in \nset{\ell}$, $V(T_{x_{i_s}}) = X_{[i_s, i'_s]}$ for some $i'_s \geq i_s$; in fact $i'_s = i_{s+1} - 1$ for $1 \leq s < \ell$, $i_1 = i + 1$, and $V(T_{x_i}) = X_{[i,i'_\ell]}$.
By the induction hypothesis, the vertices of $T_{x_{i_s}}$ may be traversed by the depth\hyp{}first search in the order $x_{i_s}, x_{i_s + 1}, \dots, x_{i'_s}$;
consequently, the vertices of $T_{x_i}$ may be traversed in the order $x_i, x_{i_1}, x_{i_1 + 1}, \dots, x_{i'_1}, x_{i_2}, x_{i_2 + 1}, \dots, x_{i'_2}, \dots, x_{i_\ell}, x_{i_\ell + 1}, \dots, x_{i'_\ell}$, that is, in the order $x_i, x_{i+1}, \dots, x_{i'_\ell}$.

\ref{lem:DFS:order}~$\Rightarrow$~\ref{lem:DFS:intervals}:
Assume that $T$ satisfies condition \ref{lem:DFS:order}.
For any $x_i \in X_n$, $x_i$ is the first vertex in $T_{x_i}$ visited by the depth\hyp{}first search, all vertices of $T_{x_i}$ are traversed before the depth\hyp{}first search continues with vertices not belonging to $T_{x_i}$, and once the depth\hyp{}first search leaves the subtree $T_{x_i}$ it will never return to it.
Consequently, condition \ref{lem:DFS:intervals} clearly holds.
\end{proof}

Bracketings of size $n$ are in a one\hyp{}to\hyp{}one correspondence with DFS trees of size $n$.

\begin{lemma}[{cf.\ Kiss~\cite[Lemma~2]{Kiss}}]
Let $n \in \IN$.
\begin{enumerate}[label={\upshape{(\alph*)}}]
\item\label{br-tr} For any bracketing $t \in B_n$, the graph $G(t)$ is a DFS tree of size $n$.
\item\label{tr-br} Conversely, for every DFS tree $T$ of size $n$, there is a unique bracketing $t \in B_n$ such that $G(t) = T$.
\end{enumerate}
\end{lemma}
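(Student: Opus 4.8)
The plan is to handle both parts together by exploiting the recursive binary\hyp{}tree structure of bracketings. The one technical nuisance is that the right factor of a bracketing is built from a \emph{shifted} block of variables, so I would first generalize the statement to arbitrary contiguous blocks: for every $a \le b$ and every term $s$ obtained by inserting parentheses into the string $x_a x_{a+1} \cdots x_b$, the graph $G(s)$ is a rooted directed tree on $X_{[a,b]}$ with root $x_a$ such that each induced subtree $(G(s))_{x_i}$ has vertex set $X_{[i,i']}$ for some $i' \in [i,b]$; and conversely every such tree arises from a unique such $s$. Taking $a = 1$ and $b = n$ recovers the lemma.

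For the forward direction \ref{br-tr} I would induct on $b - a$. The base case $s = x_a$ is immediate. For the step, write $s = (s_1 \circ s_2)$, where $s_1$ spells out $x_a \cdots x_c$ and $s_2$ spells out $x_{c+1} \cdots x_b$ for a unique $c$. The first key step is the \emph{edge decomposition}: the subterms of $s$ are $s$ itself together with the (pairwise disjoint) subterms of $s_1$ and of $s_2$, so the edge set of $G(s)$ is the disjoint union of the edge sets of $G(s_1)$ and $G(s_2)$ and the single new edge $x_a \to x_{c+1}$ (namely $L(s_1) \to L(s_2)$) coming from the outermost product. By the induction hypothesis $G(s_1)$ and $G(s_2)$ are rooted trees on $X_{[a,c]}$ and $X_{[c+1,b]}$ with roots $x_a$ and $x_{c+1}$, and adjoining the edge $x_a \to x_{c+1}$ hangs the second tree below the root of the first; as all edges then point away from $x_a$, the result $G(s)$ is again a rooted directed tree with root $x_a$. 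The interval property is checked case by case: the subtree at $x_a$ becomes all of $X_{[a,b]}$; a subtree at $x_i$ with $a < i \le c$ is unchanged, since no new directed path can reach $x_a$, the only tail of the new edge; and a subtree at $x_i$ with $i > c$ coincides with its counterpart in $G(s_2)$.

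For the converse \ref{tr-br} I would prove existence and uniqueness simultaneously by induction on $b - a$, the crux being that the outermost split is \emph{forced} by the tree. Given a tree $T$ on $X_{[a,b]}$ as above, let $x_m$ be the largest\hyp{}indexed (i.e.\ rightmost) child of the root $x_a$; by the interval property $T_{x_m} = X_{[m,b]}$, and deleting this subtree leaves an induced tree $T'$ on $X_{[a,m-1]}$ that again satisfies all the hypotheses. I would then set $t := (t' \circ t'')$, where $t'$ and $t''$ are the terms reconstructed from $T'$ and $T_{x_m}$ by induction; the edge decomposition shows $G(t) = T$, giving existence. For uniqueness, suppose $G(t) = T$ and write $t = (t_1 \circ t_2)$ with $t_1$ on $X_{[a,k]}$ and $t_2$ on $X_{[k+1,b]}$. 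The analysis of \ref{br-tr} identifies $x_{k+1}$ as exactly the rightmost child of $x_a$ in $T$, whence $k+1 = m$ is determined by $T$; consequently $G(t_1) = T'$ and $G(t_2) = T_{x_m}$, and the induction hypothesis forces $t_1 = t'$ and $t_2 = t''$.

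The main obstacle, and the point deserving care, is precisely this canonicity of the outermost split: everything rests on the observation that in $G((s_1 \circ s_2))$ the root's rightmost child is $L(s_2)$, which is what lets one read the split point off the tree and drive the uniqueness induction. The remaining work — the index bookkeeping for shifted blocks, the verification that removing the rightmost subtree preserves the interval property, and the distinctness of the three families of edges in the decomposition — is routine once the recursive framework and the edge decomposition are in place.
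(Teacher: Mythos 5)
Your proposal is correct and takes essentially the same route as the paper's proof: both generalize to bracketings and DFS trees on interval vertex sets $X_{[a,b]}$, prove \ref{br-tr} inductively via the decomposition of $G((s_1 \circ s_2))$ as the disjoint union of $G(s_1)$ and $G(s_2)$ together with the single edge $L(s_1) \rightarrow L(s_2)$, and prove \ref{tr-br} by induction on $b-a$ by splitting off the subtree rooted at the rightmost child of the root, with uniqueness resting on exactly your key observation that this child must be $L$ of the right factor of the outermost product. The only cosmetic differences are that the paper proves \ref{br-tr} by structural induction on subterms rather than on interval length, and phrases the uniqueness step in terms of subterm occurrences in a competing bracketing rather than directly via the edge decomposition.
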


\begin{proof}
\ref{br-tr}
Let $t \in B_n$.
Then $G(t)$ is a graph on $X_n$ by definition.
We will prove by induction on the structure of terms that for every subterm $t'$ of $t$, the graph $G(t')$ is a directed tree on $\var(t')$ with root $L(t')$ such that for every $x_i \in \var(t')$, the subtree of $G(t')$ rooted at $x_i$ has vertex set of the form $X_{[i,i']}$ for some $i' \in \nset{n}$ with $i' \geq i$.
The claim obviously holds for any subterm of the form $t' = x_i \in X_n$.
Let now $t' = (t_1 \circ t_2)$, and assume that the claim holds for the subterms $t_1$ and $t_2$.
By the induction hypothesis, for $\ell \in \{1,2\}$, $G(t_\ell)$ is a directed tree on $\var(t_\ell)$ with root $L(t_\ell)$; moreover, $\var(t_\ell) = X_{[p_\ell, q_\ell]}$, where $p_\ell = L(t_\ell)$ and $q_\ell \geq p_\ell$.
In fact, $q_1 = p_2 - 1$.
Since $\var(t_1) \cap \var(t_2) = \emptyset$, $G(t')$ is obtained by adding the edge $L(t_1) \rightarrow L(t_2)$ to the disjoint union of $G(t_1)$ and $G(t_2)$; the resulting graph is a directed tree on $\var(t') = X_{[p_1, q_2]}$.
Moreover, given a vertex $x_i \in \var(t')$, we have that the subtree of $G(t')$ induced by $x_i$ is identical to the one induced by $x_i$ in $G(t_1)$ if $x_i \in \var(t_1) \setminus \{x_{p_1}\}$ and identical to the one induced by $x_i$ in $G(t_2)$ if $x_i \in \var(t_2)$; by the induction hypothesis, the subtree has the desired form.

\ref{tr-br}
For the purpose of this proof, we relax the notions of bracketing and DFS tree so as to allow variable or vertex sets of the form $X_{[a,b]}$.
Let $a, b \in \IN$ with $a \leq b$, and let $n := b - a + 1$.
A term $t$ with $\var(t) = X_{[a,b]}$ is an \emph{$[a,b]$\hyp{}bracketing} if $t$ can be obtained from some $t' \in B_n$ by replacing each variable $x_i$ by $x_{a+i-1}$, $1 \leq i \leq n$.
Similarly, a rooted directed tree $T$ on $X_{[a,b]}$ is an \emph{$[a,b]$\hyp{}DFS tree} if there is a DFS tree $T'$ of size $n$ such that the map $x_i \mapsto x_{a+i-1}$ is an isomorphism $T' \to T$.

We show that for any $a, b \in \IN$ with $a \leq b$, it holds that for every $[a,b]$\hyp{}DFS tree $T$, there exists a unique $[a,b]$\hyp{}bracketing $t$ such that $G(t) = T$.
We proceed by induction on the length $b-a$ of the interval $[a,b]$.
The claim is obvious for $b - a = 0$, i.e., $a = b$.
Assume that the claim holds whenever $b - a \leq k$.
Let now $a$ and $b$ be such that $b - a = k + 1$, and let $T$ be an $[a,b]$\hyp{}DFS tree.
Let $x_{i_1}, x_{i_2}, \dots, x_{i_\ell}$ be the children of the root vertex $x_a$, and assume that $i_1 < i_2 < \dots < i_\ell$.
Then $T - T_{x_{i_\ell}}$ is an $[a, i_\ell - 1]$\hyp{}DFS tree and $T_{x_{i_\ell}}$ is an $[i_\ell, b]$\hyp{}DFS tree, so by the induction hypothesis there exist a unique $[a, i_\ell - 1]$\hyp{}bracketing $r$ such that $T - T_{x_{i_\ell}} = G(r)$ and a unique $[i_\ell, b]$\hyp{}bracketing $s$ such that $T_{x_{i_\ell}} = G(s)$.
Then $t := (r \circ s)$ is an $[a,b]$\hyp{}bracketing and it is easy to see that $G(t) = T$ because $L(r) = x_a$ and $L(s) = x_{i_\ell}$.
This proved existence.
As for uniqueness, assume $t'$ is another $[a,b]$\hyp{}bracketing such that $G(t') = T$.
Since $x_a \rightarrow x_{i_\ell}$ is an edge in $T$, $t'$ must contain a subterm of the form $(r' \circ s')$ where $L(r') = x_a$, $L(s') = x_{i_\ell}$.
Then $\var(r') = X_{[a, i_\ell - 1]}$, so $G(r')$ is the subtree of $T$ with vertex set $X_{[a, i_\ell - 1]}$, that is $G(r') = T - T_{x_{i_\ell}} = G(r)$.
Observe that $t'$ contains no subterm of the form $(r' \circ s') \circ s''$ (otherwise $L(s'') =: x_p$ would be a child of $x_a$ with $p > i_\ell$, contradicting the choice of $i_\ell$).
Consequently $\var(s') = X_{[i_\ell, b]}$, so $G(s') = T_{x_{i_\ell}} = G(s)$.
By the induction hypothesis $r = r'$ and $s = s'$, so $t = (r \circ s) = (r' \circ s') = t'$.
\end{proof}

\begin{proposition}
\label{prop:unique-depth-seq}
DFS trees are uniquely determined by their depth sequences:
if $T$ and $T'$ are DFS trees of size $n$ such that $d_T(x_i) = d_{T'}(x_i)$ for all $i \in \{1, \dots, n\}$, then $T = T'$.
\end{proposition}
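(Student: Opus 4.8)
The plan is to show that the depth sequence of a DFS tree already determines its parent map, and hence the whole tree, since a rooted tree on a fixed vertex set is completely specified by which vertex is the parent of each non\hyp{}root vertex. Concretely, I would establish the following reconstruction rule: for every $i \in \{2, \dots, n\}$, the parent of $x_i$ in a DFS tree $T$ is $x_j$, where
\[
j = \max\{\, j' \in [1, i-1] : d_T(x_{j'}) = d_T(x_i) - 1 \,\}.
\]
Since this formula refers only to the values $d_T(x_1), \dots, d_T(x_n)$, two DFS trees with identical depth sequences must have identical parent maps, and therefore the same edge set, so $T = T'$.

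The heart of the argument is verifying this rule, and this is where the interval property from Definition~\ref{def:DFS} does the work. First I would check that the indicated set is nonempty and the maximum well\hyp{}defined: writing $x_m$ for the parent of $x_i$, we have $d_T(x_m) = d_T(x_i) - 1$, and since $x_i$ is a descendant of $x_m$, the interval property gives $V(T_{x_m}) = X_{[m, m']}$ with $m \leq i \leq m'$; as $x_m \neq x_i$ this forces $m < i$, so $m$ lies in the index set and hence $m \leq j$.

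For the reverse inequality I would argue by contradiction, supposing $j > m$. Then $x_j$ has depth $d_T(x_m)$ and its index satisfies $m < j < i \leq m'$, so $j \in [m, m']$ and thus $x_j \in V(T_{x_m})$, i.e., $x_j$ is a descendant of $x_m$. But every proper descendant of $x_m$ has depth strictly greater than $d_T(x_m)$, whereas $d_T(x_j) = d_T(x_m)$ and $j \neq m$ --- a contradiction. Hence $j = m$, which proves the rule.

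I expect the only delicate point to be combining the contiguity of $V(T_{x_m})$ with the fact that depth strictly increases along any root\hyp{}to\hyp{}descendant path; the role of the DFS structure is precisely to guarantee that, within the initial segment $x_1, \dots, x_{i-1}$, the last vertex at depth $d_T(x_i) - 1$ is forced to be an ancestor of $x_i$, hence its parent. Once the reconstruction rule is in hand, the equality $T = T'$ is immediate, since both trees have root $x_1$ and, by hypothesis, the same depth sequence.
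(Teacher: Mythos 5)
Your proof is correct, and it takes a somewhat different route from the paper's, although it runs on exactly the same two ingredients: the interval property of Definition~\ref{def:DFS} and the fact that a proper descendant of a vertex has strictly larger depth. The paper argues by contradiction directly on the pair of trees: it picks a vertex $x_d$ whose parents $x_p$ in $T$ and $x_q$ in $T'$ differ, with $p<q$, observes that both parents have depth $d_T(x_d)-1$ in both trees, and then plays the interval property off twice --- $x_d \in V(T_{x_p})$ but $x_q \notin V(T_{x_p})$ (equal depths, so $x_q$ cannot be a proper descendant of $x_p$) forces $p<d<q$, while $x_d \in V(T'_{x_q})$ forces $q<d$ --- so it never needs to identify which vertex \emph{is} the parent, only that two candidate parents cannot coexist. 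You instead isolate a single-tree reconstruction lemma: the parent of $x_i$ is the last vertex among $x_1,\dots,x_{i-1}$ of depth $d_T(x_i)-1$. Your verification of this rule is sound: nonemptiness of the index set follows since the true parent $x_m$ satisfies $m<i$ by the interval property, and maximality follows since any later index $j$ with $m<j<i\leq m'$ lies in $[m,m']$, hence $x_j$ would be a proper descendant of $x_m$ of the same depth as $x_m$, which is impossible; uniqueness of the tree is then immediate from the hypothesis that the depth sequences agree. What your version buys is that it is constructive: it exhibits an explicit inverse to the depth-sequence map, so combined with Proposition~\ref{prop:depth-zag-seq} it yields an explicit bijection between DFS trees of size $n$ and zag sequences of length $n$, and it also makes rigorous the tree-building step used informally in the proof of Lemma~\ref{lem:H2} (adding $x_j$ as a child of the appropriate earlier vertex). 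The paper's proof is marginally shorter; yours proves slightly more.
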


\begin{proof}
Suppose, to the contrary, that DFS trees $T$ and $T'$ satisfy $d_T(x_i) = d_{T'}(x_i)$ for all $i \in \{1, \dots, n\}$ but $T \neq T'$.
Then there exists a vertex $x_d \in X_n$ such that its parent $x_p$ in $T$ is distinct from its parent $x_q$ in $T'$.
Assume without loss of generality that $p < q$.
Since $d_T(x_d) = d_{T'}(x_d)$, we also have $d_{T'}(x_p) = d_T(x_p) = d_T(x_d) - 1 = d_{T'}(x_d) - 1 = d_{T'}(x_q) = d_T(x_q)$.
It follows from this that $x_d \in T_{x_p}$ and $x_q \notin T_{x_p}$; therefore $p < d < q$ by Definition~\ref{def:DFS}.
On the other hand, $x_d \in T'_{x_q}$; therefore $q < d$.
We have reached a contradiction.
\end{proof}

A sequence $(d_1,\dots,d_n)$ of nonnegative integers is called a
\emph{zag sequence}\footnote{Another, perhaps more telling name 
suggested by B\'{e}la Cs\'{a}k\'{a}ny
is \emph{Sisyphus sequence}: zag sequences can increase only gradually,
in steps of $1$, but they can decrease arbitrarily.} if
\begin{equation}
\label{eq:zag}
d_1=0,\ d_2=1,\text{ and }1 \leq d_{i+1} \leq d_i + 1\text{ for all } i\in\{1,\dots,n-1\}.    
\end{equation}
This notion was introduced in \cite{CsaWal-2000}, where bracketings were
represented by binary trees instead of DFS trees. 
(See also Exercise~19(u) in \cite{Stanley-Enum2}.)
The depth of a vertex in
a DFS tree is the same as the so-called ``right depth'' of the corresponding
vertex in the binary tree representing the same bracketing.
Therefore, 2.8 of \cite{CsaWal-2000} implies that depth sequences of
DFS trees are in a one\hyp{}to\hyp{}one correspondence with zag sequences.
We include the easy proof of this fact for the sake of self-containedness.

\begin{proposition}
\label{prop:depth-zag-seq}
A sequence $(d_1,\dots,d_n)$ of nonnegative integers is the depth sequence of
a DFS tree of size $n$ if and only if it is a zag sequence.
\end{proposition}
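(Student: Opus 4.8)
The plan is to prove the two implications separately, using the interval characterization of DFS trees from Definition~\ref{def:DFS} throughout, and to invoke Proposition~\ref{prop:unique-depth-seq} only at the very end to note that the realizing tree, once shown to exist, is unique. Throughout I work directly with parents, depths, and the interval property rather than with the DFS traversal, so that the argument stays purely combinatorial.

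For the forward direction (every depth sequence of a DFS tree is a zag sequence), I would fix a DFS tree $T$ of size $n$ and check the three defining conditions. The value $d_1 = 0$ holds because $x_1$ is the root, and $d_{i+1} \geq 1$ holds because $x_{i+1}$ is never the root; in particular $d_2 = 1$ will follow once the upper bound is in hand. The crux is the inequality $d_{i+1} \leq d_i + 1$. Here I would let $x_p$ be the parent of $x_{i+1}$, so that $p \leq i$ and $d_{i+1} = d_p + 1$, and then use Definition~\ref{def:DFS} to write $V(T_{x_p}) = X_{[p,p']}$ with $p \leq i < i+1 \leq p'$. This forces $x_i \in V(T_{x_p})$, i.e.\ $x_p$ is an ancestor of $x_i$ or equals $x_i$, whence $d_p \leq d_i$ and therefore $d_{i+1} = d_p + 1 \leq d_i + 1$, as required.

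For the converse I would argue by induction on $n$, the cases $n \leq 2$ being immediate. Given a zag sequence $(d_1,\dots,d_n)$, its truncation $(d_1,\dots,d_{n-1})$ is again a zag sequence, so the induction hypothesis yields a DFS tree $T'$ of size $n-1$ with that depth sequence. I would build $T$ by attaching $x_n$ as a new leaf whose parent is $x_j$, where $j$ is the \emph{largest} index below $n$ with $d_j = d_n - 1$. Such a $j$ exists by the ``Sisyphus'' property of zag sequences: since the prefix $d_1,\dots,d_{n-1}$ starts at $0$ and never jumps up by more than $1$, it attains every integer value between $0$ and $d_{n-1}$, and in particular the value $d_n - 1$, which lies in $[0, d_{n-1}]$ because $1 \leq d_n \leq d_{n-1}+1$.

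The main work -- and the step I expect to be the real obstacle -- is verifying that $T$ is again a DFS tree. The key observation is that $x_j$ is exactly the vertex at depth $d_n - 1$ on the path from the root to $x_{n-1}$: this vertex is an ancestor of (or equal to) $x_{n-1}$, so its $T'$-subtree is an interval $X_{[j,n-1]}$ ending at the maximal index $n-1$, and no strictly larger index $\leq n-1$ can share its depth, since a proper descendant has strictly greater depth; hence it really is the largest index with depth $d_n - 1$. Consequently the ancestors of $x_n$ in $T$ are precisely $x_j$ together with its ancestors, all of which lie on the root-to-$x_{n-1}$ path and thus have $T'$-subtrees ending at index $n-1$. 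Adjoining the leaf $x_n$ turns each such interval $X_{[i,n-1]}$ into $X_{[i,n]}$, while every subtree not containing $x_n$ is left untouched; so every subtree of $T$ is again an interval and $T$ is a DFS tree. Since $x_n$ is a leaf, the earlier depths are unchanged and $d_T(x_n) = d_j + 1 = d_n$, so $T$ realizes the given sequence. Uniqueness of the realizing tree then follows at once from Proposition~\ref{prop:unique-depth-seq}.
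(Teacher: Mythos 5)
Your proof is correct, and while your necessity direction matches the paper's almost word for word (the paper simply notes that $x_{i+1}$ is a child either of $x_i$ or of an ancestor of $x_i$; your interval-based derivation of $d_p \leq d_i$ formalizes the same fact), your sufficiency direction takes a genuinely different route. The paper inducts by splitting the zag sequence at the \emph{last occurrence of the value $1$}, say $d_k = 1$: it applies the induction hypothesis to $(d_1,\dots,d_{k-1})$ and to the shifted sequence $(d_k-1,\dots,d_n-1)$, obtains DFS trees $T_1$ and $T_2$, renames the vertices of $T_2$, and adds an edge from the root of $T_1$ to the root of $T_2$. This split mirrors the top-level product $t = (r \circ s)$ of a bracketing and makes the verification that the glued tree is again a DFS tree essentially immediate. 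You instead peel off only the last entry $d_n$ and graft $x_n$ as a leaf onto the largest-indexed vertex of depth $d_n - 1$, which you correctly identify as the vertex at depth $d_n-1$ on the root-to-$x_{n-1}$ path; your check that the interval property survives (the subtrees $X_{[i,n-1]}$ along that path become $X_{[i,n]}$, all other subtrees are untouched, and $x_n$ contributes the trivial interval $X_{[n,n]}$) is sound, though it is where all the work sits in your version. Your approach buys an explicitly incremental, one-vertex-at-a-time construction that follows the depth-first traversal — in spirit the same device the paper uses in the proof of Lemma~\ref{lem:H2} to realize mod-$2$ depth sequences — at the price of a longer correctness verification; the paper's decomposition buys brevity and a structural correspondence with the recursive shape of bracketings. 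Two small points that deserve the one-line justifications you only gesture at: the parent index bound $p \leq i$ (immediate from $x_{i+1} \in V(T_{x_p}) = X_{[p,p']}$ and $p \neq i+1$) and the discrete intermediate-value argument producing $j$; neither is a gap.
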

\begin{proof}
Necessity is clear: if $x_{i+1}$ is a child of $x_i$ in a DFS tree $T$, 
then $d_T(x_{i+1})=d_T(x_i)+1$; otherwise $x_{i+1}$ is a child of one of the
ancestors of $x_i$, hence $d_T(x_{i+1}) \leq d_T(x_i)$.
We prove sufficiency by induction on $n$. The case $n=1$ is trivial, so 
let $n\geq 2$, and assume that every zag sequence of length less than $n$
is the depth sequence of a DFS tree (which is unique, by Proposition~\ref{prop:unique-depth-seq}).
Let $(d_1,\dots,d_n)$ be a zag sequence, and let $d_k=1$ be the last occurrence
of $1$ in the sequence (possibly $k=2$). Then $d_1,\dots,d_{k-1}$ and
$d_k-1,\dots,d_n-1$ are zag sequences of length less than $n$, hence, by
our induction hypothesis, they are depth sequences of DFS trees 
$T_1$ (of size $k-1$) and $T_2$ (of size $n-k+1$), respectively.
Let us form the disjoint union of $T_1$ and $T_2$ after applying the
renaming $x_i \mapsto x_{i+k-1}$ to the vertices of $T_2$.
Now if we add an edge from $x_1$ (the root of $T_1$) to 
$x_k$ (the new root of $T_2$), then we obtain a DFS tree of size $n$
with depth sequence $(d_1,\dots,d_n)$.
\end{proof}

\begin{remark}
\label{rem:lattice paths}
Zag sequences of length $n$ can be visualized as lattice paths from the origin to
to the line $x=n-1$ using steps $(1,1),(1,0),(1,-1),(1,-2),\dots$.
Another family of lattice paths is also closely related to bracketings and
DFS trees. A \emph{Dyck path} of semilength $n$ is a lattice path
from $(0,0)$ to $(2n,0)$ consisting of $n$ up-steps $U=(1,1)$ and $n$ down-steps $D=(1,-1)$
in such a way that the path never goes below the $x$ axis.
To construct the Dyck path corresponding to a DFS tree $T$, let us draw
$T$ in such a way that all edges point upwards, and the children of every
vertex are drawn in increasing order (of their subscripts) from left to right.
(All DFS trees in this paper are drawn using this convention.)
Let us follow the depth\hyp{}first search on $T$, including the
backtracking steps, returning to the root in the end 
(see the dotted line in Figure~\ref{fig:DFSpath}). 
For each step, we add an up-step $U$ or a down-step $D$ to our lattice path starting 
at the origin according to whether we are moving upwards or downwards in the tree.
(See Figure~\ref{fig:Dyck} for the Dyck path corresponding to 
the DFS tree of Figure~\ref{fig:DFSpath}. The first occurrence
of each vertex during the depth\hyp{}first search  is labelled on the diagram.)
This way we obtain a bijection from the set of DFS trees of size $n$ to the set of Dyck paths of semilength $n-1$. 
A \emph{wormderful} explanation of this bijection
is presented in \cite[p.~10]{Stanley-Catalan},
where this process is actually used
to define the depth\hyp{}first order.
\end{remark}

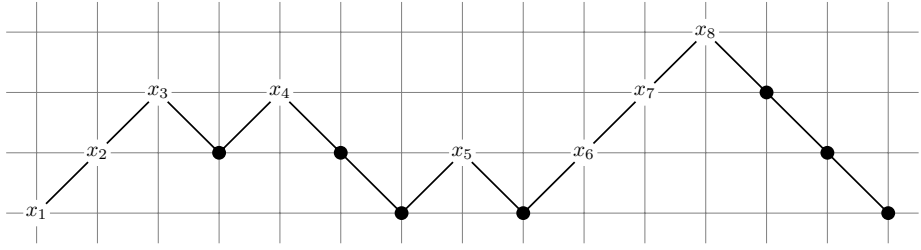
\begin{figure}
\tikzset{
    every node/.style={circle,inner sep=0.1mm,fill=white},
    dyckline/.style={-,thick,color=black,fill=black},
    dyckdot/.style={radius=0.1cm},
    dyckdot0/.style={radius=0.0cm},
    }
\scalebox{0.8}{
\begin{tikzpicture}[scale=1]
\draw [help lines] (-0.5,-0.5) grid (14.5,3.5);
\filldraw[dyckline] 
      ++(0,0) circle [dyckdot0]
    --++(1,1) circle [dyckdot0]
    --++(1,1) circle [dyckdot0]
    --++(1,-1) circle [dyckdot]
    --++(1,1) circle [dyckdot0]
    --++(1,-1) circle [dyckdot]
    --++(1,-1) circle [dyckdot]
    --++(1,1) circle [dyckdot0]
    --++(1,-1) circle [dyckdot]
    --++(1,1) circle [dyckdot0]
    --++(1,1) circle [dyckdot0]
    --++(1,1) circle [dyckdot0]
    --++(1,-1) circle [dyckdot]
    --++(1,-1) circle [dyckdot]
    --++(1,-1) circle [dyckdot];
\node (x1) at (0,0) {$x_1$};
\node (x2) at (1,1) {$x_2$};
\node (x3) at (2,2) {$x_3$};
\node (x4) at (4,2) {$x_4$};
\node (x5) at (7,1) {$x_5$};
\node (x6) at (9,1) {$x_6$};
\node (x7) at (10,2) {$x_7$};
\node (x8) at (11,3) {$x_8$};
\end{tikzpicture}
}
\caption{The Dyck path of the DFS tree of Figure~\ref{fig:DFSpath}.}
\label{fig:Dyck}
\end{figure}

\subsection{Collapsing maps and a few lemmas}

\begin{definition}
Let $T$ be a DFS tree of size $n$, and let $G$ be a digraph.
If $h = h(T)$ and $W \colon v_0 \rightarrow v_1 \rightarrow \dots \rightarrow v_h$ is a walk in $G$, then the mapping $\varphi \colon X_n \to V(G)$, $x_i \mapsto v_{d_T(x_i)}$ is clearly a homomorphism of $T$ into $G$.
Similarly, if $C \colon u_0 \rightarrow u_1 \rightarrow \dots \rightarrow u_{\ell - 1} \rightarrow u_0$ is a closed walk in $G$ with $\ell \geq 1$, then the mapping $\psi \colon X_n \to V(G)$, $x_i \mapsto v_{d_T(x_i) \bmod \ell}$ is a homomorphism of $T$ into $G$.
Such homomorphisms $\varphi$ and $\psi$ are referred to as \emph{collapsing maps} of $T$ on $W$ and $C$, respectively, and we say that the DFS tree $T$ is \emph{collapsed} on the walk $W$ (on the closed walk $C$) by $\varphi$ (by $\psi$).

We will often specify homomorphisms of DFS trees by giving a piecewise definition in which each piece is a collapsing map of a subgraph.
In particular, if $T$ is a DFS tree of size $n$, $x_d \in X_n$, $s = d_T(x_d) > 0$, $h = h(T)$, $h' = h(T_{x_d})$, $W \colon v_0 \rightarrow v_1 \rightarrow \dots \rightarrow v_s \rightarrow \dots \rightarrow v_h$ is a walk in $G$, $v_{s-1} \rightarrow u_0$ is an edge, and $W'$ is either a walk $u_0 \rightarrow u_1 \rightarrow \dots \rightarrow u_{h'}$ or a closed walk $u_0 \rightarrow u_1 \rightarrow \dots \rightarrow u_\ell \rightarrow u_0$, then the mapping $\varphi \colon X_n \to V(G)$ that collapses $T \setminus T_{x_d}$ on $W$ and $T_{x_d}$ on $W'$
is a homomorphism of $T$ into $G$, and we will refer to $\varphi$ as the \emph{collapsing map} of $(T,x_d)$ on $(W,W')$, and we say that $(T,x_d)$ is \emph{collapsed} on $(W,W')$ by $\varphi$.
\end{definition}

With the help of collapsing maps and Proposition~\ref{prop:phi}, we can derive conditions for the edges of a digraph satisfying a bracketing identity.
Let us illustrate this with a few examples that will serve as helpful tools later.

\begin{lemma}
\label{lem:TT'edges}
Let $t, t' \in B_n$, $t \neq t'$, $T := G(t)$, $T' := G(t')$, $h := h(T)$, and
let $G$ be a digraph such that $\GA{G}$ satisfies the identity $t \approx t'$.
If $W \colon v_0 \rightarrow v_1 \rightarrow \dots \rightarrow v_h$ is a walk in $G$, then $(v_{d_T(a)}, v_{d_T(b)}) \in E(G)$ for every $(a,b) \in E(T')$.
\end{lemma}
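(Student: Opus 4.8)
The plan is to feed a single, well-chosen homomorphism into Proposition~\ref{prop:phi}. First I check that the hypotheses of that proposition apply: since $t$ and $t'$ are both bracketings of $x_1 x_2 \cdots x_n$, they are nontrivial (they contain no occurrence of $\infty$), they share the variable set $\var(t) = \var(t') = X_n$, and they have the same leftmost variable $L(t) = L(t') = x_1$. Hence, using that $\GA{G}$ satisfies $t \approx t'$, Proposition~\ref{prop:phi} tells us that a map $\varphi \colon X_n \to V(G)$ is a homomorphism of $T = G(t)$ into $G$ if and only if it is a homomorphism of $T' = G(t')$ into $G$.

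Next I take as $\varphi$ the collapsing map of $T$ on the walk $W$, i.e.\ $\varphi \colon x_i \mapsto v_{d_T(x_i)}$. This is well defined precisely because the walk $W$ has length $h = h(T)$, so that every index $d_T(x_i) \in \{0, \dots, h\}$ points to an actual vertex of $W$; this is the one place where taking the height of $T$ (and not of $T'$) matters. By the definition of a collapsing map, $\varphi$ is a homomorphism of $T$ into $G$, since whenever $(x_i, x_j) \in E(T)$ we have $d_T(x_j) = d_T(x_i)+1$ and $(v_{d_T(x_i)}, v_{d_T(x_i)+1}) \in E(G)$ as $W$ is a walk.

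Finally I transport this across the identity: by Proposition~\ref{prop:phi}, $\varphi$ is then also a homomorphism of $T'$ into $G$. Reading off what this means on the edge set of $T'$ gives the conclusion: for every $(a,b) \in E(T')$ we have $(\varphi(a),\varphi(b)) = (v_{d_T(a)}, v_{d_T(b)}) \in E(G)$, as required. There is essentially no obstacle to overcome here; the whole argument is a matter of recognizing that the collapsing map of $T$ on $W$ is a ready-made homomorphism of $T$ whose image data, once pushed through the equivalence of Proposition~\ref{prop:phi}, directly encodes the required edges of $G$. The only point demanding attention is the bookkeeping of indices, namely that the edges of $T'$ are evaluated via the depth function $d_T$ of $T$ rather than $d_{T'}$, which is exactly what the collapsing map of $T$ supplies.
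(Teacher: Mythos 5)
Your proof is correct and follows exactly the paper's own argument: collapse $T$ on the walk $W$ via $x_i \mapsto v_{d_T(x_i)}$, transfer this homomorphism to $T'$ via Proposition~\ref{prop:phi}, and read off the edges of $G$ from the edges of $T'$. The only difference is that you spell out the (routine) verification of the hypotheses of Proposition~\ref{prop:phi} and the well-definedness of the collapsing map, which the paper leaves implicit.
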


\begin{proof}
The collapsing map $\varphi$ of $T$ to $W$ is a homomorphism of $T$ into $G$, so it is also a homomorphism of $T'$ into $G$ by Proposition~\ref{prop:phi}.
Consequently, for every edge $(a,b)$ of $T'$, we have $(v_{d_T(a)}, v_{d_T(b)}) = (\varphi(a), \varphi(b)) \in E(G)$.
\end{proof}

\begin{lemma}
\label{lem:loops}
Let $t, t' \in B_n$, $t \neq t'$, $T := G(t)$, $T' := G(t')$, and
let $G$ be a digraph such that $\GA{G}$ satisfies the identity $t \approx t'$.
Then the following statements hold.
\begin{enumerate}[label={\upshape{(\alph*)}}]
\item\label{lem:loops:refl-symm}
If $u, v \in V(G)$, $\{(u,u), (u,v)\} \subseteq E(G)$, and $G$ contains arbitrarily long walks with initial vertex $v$, then $\{(v,u), (v,v)\} \subseteq E(G)$.
\item\label{lem:loops:trans}
If $u, v, w \in V(G)$ and $\{(u,u), (v,v), (w,w), (v,u), (v,w)\} \subseteq E(G)$, then $\{(u,w), (w,u)\} \subseteq E(G)$.
\item\label{lem:loops:even-symm}
If $d_T(x_i) \equiv d_{T'}(x_i) \pmod{2}$ for all $x_i \in X_n$, $u, v, w \in V(G)$, $\{(u,v),\linebreak[0] (v,u),\linebreak[0] (v,w)\} \subseteq E(G)$ and $G$ contains arbitrarily long walks with initial vertex $w$, then $(w,v) \in E(G)$.
\item\label{lem:loops:even-shortcut}
If $d_T(x_i) \equiv d_{T'}(x_i) \pmod{2}$ for all $x_i \in X_n$, $u, v, u', v' \in V(G)$, $\{(u,v),\linebreak[0] (v,u),\linebreak[0] (u,v'), (v',u), (v,u'), (u',v)\} \subseteq E(G)$, then $\{(u',v'), (v',u')\} \subseteq E(G)$.
\end{enumerate}
\end{lemma}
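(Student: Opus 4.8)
The plan is to prove all four statements with a single template: realize the hypothesized edges as a walk (or closed walk) in $G$, collapse $T$ or $T'$ onto it to obtain a homomorphism, and then invoke Proposition~\ref{prop:phi} to transport this homomorphism to the other tree, reading off the desired edges as the images of its edges (this last step is packaged in Lemma~\ref{lem:TT'edges} for ordinary walks). The only structural input beyond the stated hypotheses is that $t \neq t'$ forces $T \neq T'$, so by Proposition~\ref{prop:unique-depth-seq} their depth sequences differ; I would fix a vertex $x_d$ at which $d_T$ and $d_{T'}$ disagree and route the walk so that the tree edge of the \emph{other} tree incident with $x_d$ is pinned onto the target edge.

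For parts~\ref{lem:loops:refl-symm} and \ref{lem:loops:trans}, where loops are available, I would use the loops to let a walk of the prescribed length $h = h(T)$ ``wait'' at $u$ (or at $w$), and use the hypothesis of arbitrarily long walks to ``fill up'' the remaining length from $v$ (or $w$). Concretely, for \ref{lem:loops:refl-symm} I would collapse $T$ on a walk that stays at $u$ via its loop, crosses the edge $u \to v$ at a chosen depth $s$, and then follows a long walk out of $v$. By Lemma~\ref{lem:TT'edges} every edge $(a,b)$ of $T'$ is then forced to map to an edge of $G$ under this collapsing map, and the point is to choose $s$ (using that some $T'$-edge is ``shifted'' relative to the $T$-depths because $T \neq T'$) so that one such edge maps to $(v,v)$ and another to $(v,u)$. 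Part~\ref{lem:loops:trans} is handled the same way, using the three loops to park walks at $u$ and at $w$ and the edges $v \to u$, $v \to w$ to connect them, forcing the symmetric pair $(u,w),(w,u)$.

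For parts~\ref{lem:loops:even-symm} and \ref{lem:loops:even-shortcut}, where the hypotheses supply $2$-cycles (e.g.\ $u \leftrightarrow v$) rather than loops, I would instead collapse onto the closed walk of length $2$ traversing such a $2$-cycle. The parity assumption $d_T(x_i) \equiv d_{T'}(x_i) \pmod{2}$ is exactly what makes the period-$2$ collapsing map on this closed walk well defined and, more importantly, identical whether it is computed from the $T$-depths or the $T'$-depths, so that the comparison furnished by Proposition~\ref{prop:phi} is meaningful. I would then branch at a suitable vertex $x_d$, using the extra edge ($v \to w$ in \ref{lem:loops:even-symm}) together with the arbitrarily long walks out of its endpoint to collapse the subtree $T_{x_d}$ as the collapsing map of $(T,x_d)$, and read off $(w,v)$ (respectively the pair $(u',v'),(v',u')$ in \ref{lem:loops:even-shortcut}) as the image of a $T'$-edge issuing from $x_d$ into the complementary part $T \setminus T_{x_d}$.

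The main obstacle in every part is the combinatorial bookkeeping of the switch: I must use $T \neq T'$ to guarantee that some edge of the opposite tree actually crosses between the subtree $T_{x_d}$ and its complement and lands precisely on the target edge, while simultaneously checking that the walk has the exact required length $h(T)$ (this is where the ``arbitrarily long walks'' hypotheses are consumed) and, in the parity cases, that the phase of the alternating $2$-cycle walk is chosen so that the branching edge is the prescribed one. Verifying these finitely many depth and parity conditions is routine once the walk is set up, but selecting $x_d$ together with the phase and the switch depth correctly is the crux of the argument.
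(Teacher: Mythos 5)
Your overall template (collapsing maps plus Proposition~\ref{prop:phi}) is the right one, but your structural anchor is off in a way that matters. You pivot on a vertex $x_d$ at which $d_T$ and $d_{T'}$ disagree; such a vertex may have the \emph{same} parent in both trees, and then ``the tree edge of the other tree incident with $x_d$'' is also an edge of $T$, so pinning it onto the target edge yields nothing new. Concretely, let $T$ be the path $x_1\to x_2\to x_3\to x_4$ and let $T'$ have edges $x_1\to x_2$, $x_1\to x_3$, $x_3\to x_4$: the depths of $x_4$ differ ($3$ versus $2$), yet its parent is $x_3$ in both trees. The paper instead anchors on a vertex $x_d$ with \emph{distinct parents} $x_p$ in $T$ and $x_q$ in $T'$ (which exists simply because $T\neq T'$ as trees), normalizes $p<q<d$ by swapping $t$ and $t'$, and then uses the interval property of Definition~\ref{def:DFS} to conclude $x_q\in V(T_{x_p})\setminus V(T_{x_d})$ and $r:=d_T(x_q)\geq s:=d_T(x_d)$. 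These containment and order facts are the crux, not ``routine bookkeeping'': they guarantee a $T'$-edge $(x_q,x_d)$ crossing from $T\setminus T_{x_d}$ into the \emph{root} of $T_{x_d}$ (the only subtree vertex whose image is freely chosen, namely $u_0$), they place $v_{s-1}$ in the controlled initial segment of $W$, and in parts (c), (d) they combine with the mod-$2$ hypothesis to give the phase relation $s-1\equiv r\pmod 2$ --- which is the actual role of the parity assumption (the period-$2$ collapse is well defined without it). Your choice is repairable (the \emph{minimal} index with differing depths must have distinct parents, since otherwise its common parent would already have differing depths), but you neither observe this nor extract the facts the walk constructions consume; relatedly, in (c) you read off a $T'$-edge \emph{issuing from} $x_d$ into the complement, but no such edge is guaranteed --- the guaranteed crossing edge points \emph{into} $x_d$, which forces the opposite assignment (subtree $T_{x_d}$ collapsed on the $2$-cycle $v\to u\to v$, the long walk out of $w$ serving as the tail of the main walk $W$).

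The more serious gap is that you run parts (a) and (b) with a single walk and Lemma~\ref{lem:TT'edges}, reserving the piecewise collapse of $(T,x_d)$ on a pair $(W,W')$ for (c), (d); that cannot work. In (b) the only hypothesized out-edges of $u$ and $w$ are their loops, so any walk you can exhibit in an arbitrary $G$ satisfying the hypotheses is, once it leaves $v$, stuck at $u$ or stuck at $w$; no constructible single walk visits $u$ and later $w$, hence no collapsing map onto one walk can have $(u,w)$ among the images of edges. In (a), your existence claim that ``some $T'$-edge is shifted relative to the $T$-depths'' in the needed direction is false in general: in the example above, every $T'$-edge increases $T$-depth (by $+1$ or $+2$), and every $T$-edge changes $T'$-depth by $+1$ or $0$, so in neither orientation is there an edge $(a,b)$ with the depth of $b$ strictly below that of $a$ which could be pinned onto $(v,u)$. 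The missing idea is that the decoupled second walk $W'$ drives \emph{all four} parts uniformly: the paper takes for (a) the walk $W$ consisting of $r$ occurrences of $u$ followed by a long walk from $v$, with $W'$ the loop cycle $u\to u$ for the edge $(v,u)$ and, separately, a long walk from $v$ for the edge $(v,v)$ (note that two different choices of $W'$ are needed, so a single collapsing map cannot deliver both edges at once), and for (b) the walk $W\colon v,\dots,v,u,\dots,u$ with $W'$ the loop at $w$; in every case the conclusion is the one crossing edge $(\varphi(x_q),\varphi(x_d))=(v_r,u_0)$. Without this device, parts (a) and (b) of your plan do not go through.
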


\begin{proof}
Since $t \neq t'$, there exists a vertex $x_d$ that has distinct parents in $T$ and $T'$, say $x_p$ and $x_q$, respectively.
We have $p < d$ and $q < d$, and, changing the roles of $T$ and $T'$ if necessary, we may assume that $p < q < d$.
By Definition~\ref{def:DFS},
$x_q, x_d \in V(T_{x_p})$ but $x_q \notin V(T_{x_d})$.
Let $h := h(T)$, $s := d_T(x_d)$, $r := d_T(x_q)$. Note that $d_T(x_p) = s-1$ and $r \geq s$.

For each statement, we are going to provide suitable walks $W \colon v_0 \rightarrow v_1 \rightarrow \dots \rightarrow v_h$ and $W' \colon u_0 \rightarrow u_1 \rightarrow \cdots$ in $G$, with $v_{s-1} \rightarrow u_0$ being an edge, and we consider the collapsing map $\varphi$ of $(T,x_d)$ on $(W,W')$, which is a homomorphism of $T$ into $G$.
By Proposition~\ref{prop:phi}, $\varphi$ is also a homomorphism of $T'$ into $G$.
Since $(x_q, x_d) \in E(T')$, we obtain the desired edge $(\varphi(x_q), \varphi(x_d)) = (v_r, u_0) \in E(G)$.

\ref{lem:loops:refl-symm}
We obtain the edge $v \rightarrow u$ by letting
$W$ be the walk starting with $r$ occurrences of $u$, followed by a walk of length $h-r$ starting at $v$,
and letting $W'$ be the cycle $u \rightarrow u$.
We obtain the edge $v \rightarrow v$ by letting
$W$ be as above
and letting $W'$ be a sufficiently long walk starting at $v$.

\ref{lem:loops:trans}
We obtain the edge $u \rightarrow w$ by letting
$W \colon v \rightarrow \dots \rightarrow v \rightarrow u \rightarrow \dots \rightarrow u$ with $r$ occurrences of $v$ and $h-r+1$
occurrences of $u$
and $W' \colon w \rightarrow w$.
By swapping $u$ with $w$ in the above, we obtain also the edge $w \rightarrow u$.

\ref{lem:loops:even-symm}
We obtain the edge $w \rightarrow v$ by letting
$W \colon v_0 \rightarrow v_1 \rightarrow \dots \rightarrow v_h$ be the walk in which $v_0, \dots, v_{r-1}$ alternate between vertices $u$ and $v$ such that $v_{r-1} = v$, followed by the vertices of a walk of length $h-r$ starting at $w$,
and letting $W'$ be the cycle $v \rightarrow u \rightarrow v$.
Note that $s-1 = d_T(x_p) = d_T(x_d) - 1 \equiv d_{T'}(x_d) - 1 = d_{T'}(x_q) \equiv d_T(x_q) = r \pmod{2}$, so $v_{s-1} = u$ and $v_{s-1} \rightarrow u_0$ is indeed an edge in $G$.

\ref{lem:loops:even-shortcut}
We obtain the edge $u' \rightarrow v'$ by letting
$W \colon v_0 \rightarrow v_1 \rightarrow \dots \rightarrow v_h$ be the walk with
$v_i := u$ for $i \equiv r \pmod{2}$, $i \neq r$, 
$v_r := u'$, and
$v_i := v$ for $i \not\equiv r \pmod{2}$,
and letting $W'$ be the cycle $v' \rightarrow u \rightarrow v'$.
Note that we have $s-1 \equiv r \pmod{2}$ as above, so $v_{s-1} = u$ and $v_{s-1} \rightarrow u_0$ is indeed an edge in $G$.
\end{proof}

\begin{remark}
There exist arbitrarily long walks with initial vertex $v$ if, for example, $v$ lies on a cycle, or there is a path from $v$ to a vertex $v'$ that lies on a cycle.
\end{remark}

\section{Associative spectra of graph algebras of undirected graphs}
\label{sect undirected}

It is relatively easy to determine the associative spectra of graph algebras of undirected graphs.
It turns out that there are only three distinct possibilities: the sequences of all $1$'s, powers of $2$, and Catalan numbers.
Undirected graphs are classified into these three types in Theorem~\ref{thm:main-undirected}.

As we will see, a key criterion for the classification of pairs of distinct DFS trees of size $n$ is whether
the depths of each vertex in the two trees are congruent modulo $2$.

\begin{lemma}
\label{lem:H2}
Let $\sim$ be the equivalence relation on $B_n$ that relates $t$ and $t'$ if and only if
$d_{G(t)}(x_i) \equiv d_{G(t')}(x_i) \pmod{2}$ for all $x_i \in X_n$.
Then $\card{B_n / {\sim}} = 2^{n-2}$ for $n \geq 2$.
\end{lemma}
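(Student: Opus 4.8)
The plan is to count equivalence classes of $\sim$ by reformulating them in terms of the parity of depth sequences. By Proposition~\ref{prop:depth-zag-seq}, bracketings of size $n$ correspond bijectively to zag sequences $(d_1,\dots,d_n)$, and $t \sim t'$ holds if and only if the corresponding zag sequences agree in every coordinate modulo $2$. So the quantity $\card{B_n / {\sim}}$ equals the number of distinct parity patterns $(d_1 \bmod 2, \dots, d_n \bmod 2)$ that arise as the reduction modulo $2$ of some zag sequence. The task thus reduces to counting these attainable parity vectors.

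First I would record the constraints that a zag sequence imposes on its parity vector. Writing $\varepsilon_i := d_i \bmod 2 \in \{0,1\}$, the conditions $d_1 = 0$ and $d_2 = 1$ force $\varepsilon_1 = 0$ and $\varepsilon_2 = 1$. For the remaining coordinates, I claim that $\varepsilon_3, \dots, \varepsilon_n$ can be chosen completely freely, which would immediately give $2^{n-2}$ classes. The key observation is the defining inequality $1 \leq d_{i+1} \leq d_i + 1$: the value $d_{i+1}$ can either increase by exactly $1$ (changing parity) or drop down to any value in $\{1,\dots,d_i\}$ (permitting either parity, provided $d_i$ is large enough to reach a value of the desired parity that is at least $1$).

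The main step is therefore to show that every parity vector $(0,1,\varepsilon_3,\dots,\varepsilon_n)$ with $\varepsilon_i \in \{0,1\}$ is realizable by an actual zag sequence. I would construct such a sequence greedily, building $d_{i+1}$ from $d_i$ so as to match the prescribed parity $\varepsilon_{i+1}$ while keeping the values large. Concretely, one natural choice is to let the sequence climb as high as possible: set $d_{i+1} = d_i + 1$ whenever the parities differ ($\varepsilon_{i+1} \neq \varepsilon_i$), and set $d_{i+1} = d_i - 1$ whenever the parities agree ($\varepsilon_{i+1} = \varepsilon_i$); since the increasing step flips parity and the decreasing step by $2$ preserves it, this produces the correct parity at each stage. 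I would need to verify that this stays within the zag constraints, namely that $d_{i+1} \geq 1$ throughout. The easiest way is to track a lower bound on the partial values; alternatively, one can simply observe that if a value threatens to drop below $1$, the decreasing step can instead land at $1$ or $2$ (whichever has parity $\varepsilon_{i+1}$), since any drop to a value in $\{1,\dots,d_i\}$ is allowed by \eqref{eq:zag}. Either way the constraint $1 \leq d_{i+1} \leq d_i+1$ is maintained.

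The only genuine obstacle is this realizability argument, specifically the bookkeeping ensuring the sequence never drops below $1$ when the prescribed parity is odd and never needs a value below $1$ when it is even (recall that $0$ is even, so $\varepsilon_{i+1}=0$ can always be met by dropping to a small even value if the current height permits, and otherwise by the value $0$ itself once $i \geq 2$). Since the two parities available at each step are controlled by the up-step and the multiple drop options, a short induction confirms that every target parity $\varepsilon_{i+1}$ is attainable from any valid $d_i \geq 1$, giving a bijection between parity vectors of the form $(0,1,*,\dots,*)$ and $\sim$-classes. As there are $2^{n-2}$ such vectors, we conclude $\card{B_n / {\sim}} = 2^{n-2}$ for $n \geq 2$.
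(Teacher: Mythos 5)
Your overall route is the same as the paper's: parametrize the $\sim$\hyp{}classes by parity vectors, note that $\varepsilon_1 = 0$ and $\varepsilon_2 = 1$ are forced, and then realize each of the $2^{n-2}$ remaining patterns. The paper does the realization by building a DFS tree directly (attach $x_j$ as a child of $x_{j-1}$ to flip the parity, or as a child of the \emph{parent} of $x_{j-1}$ to keep it), which via Proposition~\ref{prop:depth-zag-seq} is the same as your zag\hyp{}sequence construction. However, your primary greedy rule is wrong as stated: when $\varepsilon_{i+1} = \varepsilon_i$ you set $d_{i+1} = d_i - 1$, but a step of $-1$ \emph{changes} parity (and also violates $d_{i+1} \geq 1$ when $d_i = 1$); the parity\hyp{}preserving choice is $d_{i+1} = d_i$, which is legal by \eqref{eq:zag} since $d_i \geq 1$ for all $i \geq 2$, and corresponds exactly to the paper's ``child of the parent'' move. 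Likewise your parenthetical fallback ``by the value $0$ itself once $i \geq 2$'' is not available: \eqref{eq:zag} forces $d_{i+1} \geq 1$ for every $i \geq 1$. Neither slip is fatal, because the other observation in your proposal already suffices: from any $d_i \geq 1$ the admissible next values form the interval $[1, d_i + 1]$, which contains $\{1,2\}$ and hence both parities, so every target $\varepsilon_{i+1}$ is attainable (by $d_{i+1} = d_i$ or $d_{i+1} = d_i + 1$, say), and with $d_{i+1} \geq 1$ automatic there is no bookkeeping left to do. With the rule corrected in this way, your argument is complete and essentially identical to the paper's proof.
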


\begin{proof}
The \emph{depth sequence modulo $2$} of a bracketing $t \in B_n$ is the tuple
$d_{t,2} := (d_1, d_2, \dots, d_n)$, where $d_i := d_{G(t)}(x_i) \bmod{2}$.
We clearly have $d_{t,2} \in \{0\} \times \{1\} \times \{0,1\}^{n-2}$, because $x_1$ and $x_2$ always have depths $0$ and $1$, respectively.
On the other hand, every tuple $(d_1, d_2, \dots, d_n) \in \{0\} \times \{1\} \times \{0,1\}^{n-2}$ is the depth sequence modulo $2$ of some bracketing $t \in B_n$, which we can build as follows.
The vertices $x_1$ and $x_2$ must have depths $0$ and $1$, respectively.
For $j = 2, \dots, n$,
if $d_j \neq d_{j-1}$, then we add $x_j$ as a child of $x_{j-1}$;
if $d_j = d_{j-1}$, then we add $x_j$ as a child of the unique parent of $x_{j-1}$.
It is now obvious that $\card{B_n / {\sim}} = \card{\{0\} \times \{1\} \times \{0,1\}^{n-2}} = 2^{n-2}$.
\end{proof}

\begin{lemma}
\label{lem:complete-bipartite}
Let $K$ be an undirected connected graph with no loops.
Assume that for all vertices $a$, $b$, $c$, $d$ of $K$ it holds that if $a \rightarrow b \rightarrow c \rightarrow d$ is a walk in $K$, then $a \rightarrow d$ is an edge.
Then $K$ is complete bipartite.
\end{lemma}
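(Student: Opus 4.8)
The plan is to establish two things: that $K$ is bipartite, and that its bipartition is complete. Both follow from a single observation, which I will call the \emph{odd\hyp{}walk shortcut}. The hypothesis says exactly that any walk $a \rightarrow b \rightarrow c \rightarrow d$ of length $3$ can be replaced by the single edge $a \rightarrow d$. Iterating this, I claim that whenever there is a walk of odd length from a vertex $u$ to a vertex $v$, the edge $u \rightarrow v$ is present. This is proved by induction on the odd length $\ell$: the case $\ell = 1$ is immediate, and for $\ell \geq 3$ I replace the first three edges $w_0 \rightarrow w_1 \rightarrow w_2 \rightarrow w_3$ of the walk by the edge $w_0 \rightarrow w_3$ guaranteed by the hypothesis, obtaining a walk of odd length $\ell - 2$ from $u$ to $v$, to which the induction hypothesis applies.

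First I would show that $K$ is bipartite, i.e.\ has no odd cycle. Suppose it did, and take an odd cycle $v_0 \rightarrow v_1 \rightarrow \dots \rightarrow v_{L-1} \rightarrow v_0$ of minimum length $L$ (so $L$ is odd and $L \geq 3$, since loops are forbidden). Consider the length\hyp{}$3$ walk $v_0 \rightarrow v_1 \rightarrow v_2 \rightarrow v_3$, with indices read cyclically. If $L = 3$, then $v_3 = v_0$ and the hypothesis produces the loop $v_0 \rightarrow v_0$, contradicting the absence of loops. If $L \geq 5$, then $v_0 \rightarrow v_3$ is a genuine chord, and $v_0 \rightarrow v_3 \rightarrow v_4 \rightarrow \dots \rightarrow v_{L-1} \rightarrow v_0$ is a closed walk of odd length $L - 2 \geq 3$, hence contains an odd cycle shorter than $L$, contradicting minimality. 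Thus $K$ has no odd cycle and, being connected, admits a bipartition $V(K) = X \cup Y$ with all edges running between $X$ and $Y$.

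It then remains to show that every $x \in X$ and $y \in Y$ are adjacent. Since $K$ is connected there is a path from $x$ to $y$, and since $x$ and $y$ lie in different classes of the bipartition this path has odd length. By the odd\hyp{}walk shortcut established above, $x \rightarrow y$ is an edge. As there are also no edges inside $X$ or inside $Y$, the graph $K$ is exactly the complete bipartite graph on the parts $X$ and $Y$. (The degenerate case in which one part is empty forces $K$ to be a single vertex, which is vacuously complete bipartite.)

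The main point to get right is the bipartiteness step: the shortcut cannot be applied blindly to a shortest odd cycle, because on a triangle it yields a loop rather than a shorter cycle. Treating the triangle separately (where it contradicts the loop\hyp{}free hypothesis directly) and checking that for $L \geq 5$ the chord replacement strictly decreases the length while preserving oddness are the only delicate points; once bipartiteness is in hand, the completeness step is a routine application of the shortcut induction.
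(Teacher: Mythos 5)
Your proof is correct and follows essentially the same approach as the paper: both establish bipartiteness by shortening an odd cycle via the length\hyp{}$3$ shortcut (with the triangle case yielding a forbidden loop) and then obtain completeness by shortening an odd path between the two parts. Your packaging of the shortcut as an induction on odd walk length, and your use of a minimal odd cycle rather than the paper's ``repeat until length $1$'' phrasing, are only cosmetic differences.
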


\begin{proof}
Suppose, to the contrary, that $K$ is not bipartite.
Then $K$ has a cycle of odd length $m \geq 3$, say $v_1 \rightarrow v_2 \rightarrow \dots \rightarrow v_m \rightarrow v_1$.
By applying our assumption to the walk $v_{m-2} \rightarrow v_{m-1} \rightarrow v_m \rightarrow v_1$, we get the edge $v_{m-2} \rightarrow v_1$;
hence $v_1 \rightarrow v_2 \rightarrow \dots \rightarrow v_{m-2} \rightarrow v_1$ is a cycle of length $m-2$ in $K$.
Repeating this argument, we eventually arrive at a cycle of length $1$.
This contradicts the fact that $K$ has no loops.

We have established that $K$ must be bipartite.
It remains to show that $K$ is complete bipartite.
Let $B_1$, $B_2$ be a bipartition of $K$, and let $x \in B_1$, $y \in B_2$.
We want to show that $x \rightarrow y$ is an edge in $K$.
Since $K$ is connected, there exists a path $x = v_0 \rightarrow v_1 \rightarrow \dots \rightarrow v_n = y$ in $K$, with $n$ odd.
If $n \geq 3$, then our assumption implies that $v_0 \rightarrow v_1 \rightarrow \dots \rightarrow v_{n-3} \rightarrow v_n$ is a path of length $n-2$ from $x$ to $y$.
Repeating this argument, we eventually get a path of length $1$ from $x$ to $y$, i.e., an edge $x \rightarrow y$.
\end{proof}

\begin{theorem}
\label{thm:main-undirected}
Let $G$ be an undirected graph.
\begin{enumerate}[label={\upshape{(\roman*)}}]
\item
If every connected component of $G$ is either trivial or a complete graph \textup{(}with loops\textup{)}, then $\GA{G}$ satisfies every bracketing identity.
In this case, $s_n(\GA{G}) = 1$ for all $n \in \IN_+$.

\item
If every connected component is either trivial, a complete graph \textup{(}with loops\textup{)}, or a complete bipartite graph, and the last case occurs at least once, then $\GA{G}$ satisfies a bracketing identity $t \approx t'$ if and only if $d_{G(t)}(x_i) \equiv d_{G(t')}(x_i) \pmod{2}$ for all $x_i \in X_n$.
In this case, $s_n(\GA{G}) = 2^{n-2}$ for all $n \geq 2$.

\item
Otherwise $\GA{G}$ satisfies no nontrivial bracketing identity.
In this case, $s_n(\GA{G}) = C_{n-1}$ for all $n \in \IN_+$.
\end{enumerate}
\end{theorem}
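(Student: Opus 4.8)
The three cases are mutually exclusive and exhaustive as conditions on $G$, so it suffices to prove the satisfaction claim stated in each; the spectrum values then follow. In case (i) every bracketing identity holds, so all bracketings induce the same operation and $s_n = 1$. In case (ii) the relation $\sigma_n(\GA{G})$ will coincide with the relation $\sim$ of Lemma~\ref{lem:H2}, whence $s_n = 2^{n-2}$. In case (iii) no two distinct bracketings are equivalent, so $s_n = \card{B_n} = C_{n-1}$. The engine throughout is Proposition~\ref{prop:phi}, which converts satisfaction of $t \approx t'$ (note $G(t)$ and $G(t')$ share variable set $X_n$ and root $x_1$) into the assertion that a map $\varphi \colon X_n \to V(G)$ is a homomorphism of $G(t)$ into $G$ exactly when it is a homomorphism of $G(t')$. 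I will call the identity $t \approx t'$ \emph{even} if $d_{G(t)}(x_i) \equiv d_{G(t')}(x_i) \pmod 2$ for all $i$, and \emph{odd} otherwise, and I will organize the proof around two structural lemmas about a single satisfied identity plus one converse lemma.

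Case (i) and the converse half of (ii) are the ``homomorphism-bookkeeping'' parts. If every component is trivial or complete with loops, then for $n \geq 2$ a map $\varphi$ is a homomorphism of any DFS tree into $G$ if and only if its image lies in a single complete-with-loops component: a tree with edges cannot map into a trivial component, and every map into a complete-with-loops component is a homomorphism. This condition does not mention $t$, so Proposition~\ref{prop:phi} yields case (i). For the converse half of (ii), I will show that when every component is trivial, complete with loops, or complete bipartite, every even identity holds. Fixing a homomorphism $\varphi$ of $G(t)$, its image lies in one component $K$; the trivial and complete-with-loops cases are as above, and if $K$ is complete bipartite then, since a tree is bipartite with colour classes exactly its even- and odd-depth vertices, $\varphi$ is a homomorphism precisely when it sends these two classes into the two sides of $K$. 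Evenness makes those two classes common to $G(t)$ and $G(t')$, so $\varphi$ is a homomorphism of both trees simultaneously, and Proposition~\ref{prop:phi} applies.

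The heart of the argument is the forward direction: the structural consequences of satisfying one nontrivial identity. First, if a non-trivial component $K$ contains a loop, then Lemma~\ref{lem:loops}\ref{lem:loops:refl-symm} propagates the loop along edges, so by connectedness every vertex of $K$ carries a loop, and then Lemma~\ref{lem:loops}\ref{lem:loops:trans} together with a shortest-path induction forces $K$ to be complete; hence \emph{any} looped non-trivial component is complete with loops, for any satisfied identity. Second, if the identity is even and $K$ is loopless, then for every walk $a \rightarrow b \rightarrow c \rightarrow d$ in $K$ I apply Lemma~\ref{lem:loops}\ref{lem:loops:even-shortcut} with $(u,v,u',v') = (c,b,a,d)$ to obtain the shortcut edge $a \rightarrow d$, whereupon Lemma~\ref{lem:complete-bipartite} shows $K$ is complete bipartite. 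Together these prove that an even satisfied identity forces every non-trivial component to be complete with loops or complete bipartite.

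The remaining and most delicate point is the odd case: I claim that if $\GA{G}$ satisfies an odd identity then every non-trivial component is complete with loops. The trick is to manufacture a loop. Reading $x_i \mapsto d_{G(t)}(x_i) \bmod 2$ as the unique proper two-colouring of the tree $G(t)$, the identity is even exactly when this colouring is also proper for $G(t')$; hence an odd identity supplies an edge $(x_i,x_j)$ of $G(t')$ whose endpoints have equal $G(t)$-depth parity. Given any edge $a \rightarrow b$ of a non-trivial component $K$, let $W$ be the walk of length $h(G(t))$ that oscillates $a, b, a, b, \dots$; by Lemma~\ref{lem:TT'edges}, $(v_{d_{G(t)}(x_i)}, v_{d_{G(t)}(x_j)})$ is an edge of $G$, but the two indices have equal parity, so $v_{d_{G(t)}(x_i)} = v_{d_{G(t)}(x_j)}$ and this ``edge'' is a loop at $a$ or at $b$; the first structural observation then upgrades $K$ to complete with loops. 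To assemble: a type-(i) graph is handled directly; for a type-(ii) graph the converse lemma handles even identities, while an odd identity would force every non-trivial component to be complete with loops, contradicting the presence of a (loopless) complete bipartite component, so $\sigma_n(\GA{G}) = {\sim}$ and $s_n = 2^{n-2}$; for a type-(iii) graph any satisfied nontrivial identity, even or odd, would make its offending component complete with loops or complete bipartite by the structural lemmas, a contradiction, so no nontrivial identity holds and $s_n = C_{n-1}$. I expect the odd case to be the main obstacle—specifically, the parity bookkeeping that identifies ``odd'' with the existence of a $G(t')$-edge that is parity-horizontal in $G(t)$, and the choice of the oscillating walk so that Lemma~\ref{lem:TT'edges} returns a genuine loop; the remaining steps are careful but routine applications of the preliminary lemmas.
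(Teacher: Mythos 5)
Your proposal is correct and takes essentially the same approach as the paper's proof: the same decomposition (looped components forced complete with loops via Lemma~\ref{lem:loops}\ref{lem:loops:refl-symm} and \ref{lem:loops:trans}; even identities forcing loopless nontrivial components to be complete bipartite via Lemma~\ref{lem:loops}\ref{lem:loops:even-shortcut} and Lemma~\ref{lem:complete-bipartite}; the converse direction via the parity $2$\hyp{}colouring of the DFS trees and Proposition~\ref{prop:phi}; the count $2^{n-2}$ via Lemma~\ref{lem:H2}). Your only deviation is the odd case, where you manufacture a loop by applying Lemma~\ref{lem:TT'edges} to the walk oscillating along an edge $(a,b)$ --- but this is exactly the paper's Claim~\ref{clm:bipartite} read contrapositively, since the collapsing map onto that oscillating walk is precisely the parity map the paper uses to deduce evenness from the absence of loops.
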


\begin{proof}
Let $t, t' \in B_n$, $t \neq t'$.
Denote $T := G(t)$, $T' := G(t')$.
Assume that $G = (V,E)$ satisfies $t \approx t'$.

\begin{clm}
\label{clm:complete}
Every connected component of $G$ containing a loop is a complete graph (with loops).
\end{clm}

\begin{proof}
Let $K$ be a connected component of $G$ containing a loop.
We will show that the edge relation $E(K)$ is reflexive, symmetric, and transitive.
From this we can conclude that $K$ is a complete graph with loops.
The claim is obvious if $K$ has only one vertex, so we may assume that $K$ has at least two vertices.
The edge relation is symmetric because $G$ is undirected.

For reflexivity, let $u$ be a vertex in $K$ with a loop, and let $v$ be a vertex adjacent to $u$.
It follows from Lemma~\ref{lem:loops}\ref{lem:loops:refl-symm} that $(v,v) \in E(K)$ (note that $v$ belongs to the cycle $v \rightarrow u \rightarrow v$).
From this we can conclude that every vertex in $K$ has a loop, that is, the edge relation $E(K)$ is reflexive.

For transitivity, assume that $(u,v)$ and $(v,w)$ are edges in $K$.
By reflexivity we have loops at vertices $u$, $v$ and $w$, and by symmetry we have also the edges $(v,u)$ and $(w,v)$.
Now Lemma~\ref{lem:loops}\ref{lem:loops:trans} implies that $(u,w) \in E(K)$.
\clmqed{clm:complete}
\end{proof}

\begin{clm}
\label{clm:bipartite}
Every nontrivial connected component of $G$ without loops is a complete bipartite graph.
Such a component exists only if
$d_T(x_i) \equiv d_{T'}(x_i) \pmod{2}$ for all $x_i \in X_n$.
\end{clm}

\begin{proof}
Let $K$ be a nontrivial connected component of $G$ without loops.
Then $K$ contains an edge $(u,v)$.
Then the map $\varphi \colon X_n \to V$,
\[
\varphi(x) =
\begin{cases}
u, & \text{if $d_T(x) \equiv 0 \pmod{2}$,} \\
v, & \text{if $d_T(x) \equiv 1 \pmod{2}$,}
\end{cases}
\]
is clearly a homomorphism of $T$ into $G$.
By Proposition~\ref{prop:phi}, $\varphi$ is a homomorphism of $T'$ into $G$, so $d_T(x_i) \equiv d_{T'}(x_i) \pmod{2}$ for all $x_i \in X_n$.

In order to conclude that $K$ is complete bipartite, it suffices, by Lemma~\ref{lem:complete-bipartite}, to show that if $a \rightarrow b \rightarrow c \rightarrow d$ is a walk in $K$, then $(a,d)$ is an edge.
Since the edge relation is symmetric, this holds by Lemma~\ref{lem:loops}\ref{lem:loops:even-shortcut}.
\clmqed{clm:bipartite}
\end{proof}

\textit{Proof of Theorem~\ref{thm:main-undirected} continued.}
Claims \ref{clm:complete} and \ref{clm:bipartite} show that if $G$ satisfies $t \approx t'$, then the connected components of $G$ are trivial, complete graphs (with loops), or complete bipartite graphs, and if the last case occurs, then $d_T(x_i) \equiv d_{T'}(x_i) \pmod{2}$ for all $x_i \in X_n$.

Assume now that the connected components of $G$ are trivial, complete graphs (with loops), or complete bipartite graphs, and if one of the components is a complete bipartite graph, then $d_T(x_i) \equiv d_{T'}(x_i) \pmod{2}$ for all $x_i \in X_n$.
In order to prove that $G$ satisfies $t \approx t'$, we apply Proposition~\ref{prop:phi}.
Let $\varphi \colon X_n \to V$ be a homomorphism of $T$ into $G$.
Since $T$ is connected and contains an edge, the image of $\varphi$ lies in a single nontrivial connected component $K$ of $G$.
If $K$ is a complete graph, then $\varphi$ is obviously a homomorphism of $T'$ into $G$.

Consider then the case where $K$ is a complete bipartite graph with bipartition $B_1, B_2$.
It is easy to see that for all $x_i, x_j \in X_n$, $\varphi(x_i)$ and $\varphi(x_j)$ lie in the same part ($B_1$ or $B_2$) if and only if $d_T(x_i) \equiv d_T(x_j) \pmod{2}$.
By our assumption, we have $d_T(x_i) \equiv d_{T'}(x_i) \pmod{2}$ for all $x_i \in X_n$,
which implies that $\varphi$ is also a homomorphism of $T'$ into $G$.

A similar argument shows that every homomorphism of $T'$ into $G$ is also a homomorphism of $T$ into $G$.
By Proposition~\ref{prop:phi}, $G$ satisfies $t \approx t'$.

We have shown that $G$ satisfies a nontrivial bracketing identity $t \approx t'$ if and only if 
the connected components of $G$ are trivial, complete graphs (with loops), or complete bipartite graphs,
and if the last case occurs, then $d_T(x_i) \equiv d_{T'}(x_i) \pmod{2}$ for all $x_i \in X_n$.
This gives us the three possible associative spectra.
If $\GA{G}$ satisfies all bracketing identities, then $\GA{G}$ is associative and $s_n(\GA{G}) = 1$ for all $n \in \IN_+$.
If $\GA{G}$ satisfies no nontrivial bracketing identity, then $s_n(\GA{G}) = C_{n-1}$ for all $n \in \IN_+$.
In the last possible case, $\sigma_n(\GA{G})$ relates $t$ and $t'$ if and only if $d_T(x_i) \equiv d_{T'}(x_i) \pmod{2}$ for all $x_i \in X_n$; by Lemma~\ref{lem:H2} we have $s_n(\GA{G}) = 2^{n-2}$.
\end{proof}

\section{Associative and antiassociative digraphs}
\label{sect directed}

A digraph $G$ is \emph{associative} if $\GA{G}$ satisfies the associative identity 
$x_1 (x_2 x_3) \approx (x_1 x_2) x_3$, i.e., if the associative spectrum of 
$\GA{G}$ is constant $1$.
Associative digraphs were characterized by T.~Poomsa\hyp{}ard~\cite{Poo-2000};
the equivalence of conditions \ref{prop:associative:associative} and \ref{prop:associative:edges} in the following can be verified by applying Proposition~\ref{prop:phi} to the DFS trees associated with the two bracketings appearing in the associative law (see Figure~\ref{fig:associative}).

\begin{figure}
\tikzset{every node/.style={circle,draw,inner sep=1.5,fill=black}, every path/.style={->,>=stealth}}
\begin{tikzpicture}[scale=1, baseline=(x1)]
\node[label=below:{$x_1$}] (x1) at (0,0) {};
\node[label=left:{$x_2$}] (x2) [above of=x1] {};
\node[label=left:{$x_3$},label=right:{\phantom{$x_3$}}] (x3) [above of=x2] {};
\node[rectangle,draw=none,fill=none] () [below of=x1] {$G(t)$};
\path (x1) edge (x2) edge (x3);
\end{tikzpicture}
\qquad
\begin{tikzpicture}[scale=1, baseline=(x1)]
\node[label=below:{$x_1$}] (x1) at (0,0) {};
\node[label=left:{$x_2$}] (x2) [above left of=x1] {};
\node[label=right:{$x_3$}] (x3) [above right of=x1] {};
\node[rectangle,draw=none,fill=none] () [below of=x1] {$G(t')$};
\path (x1) edge (x2) edge (x3);
\end{tikzpicture}
\caption{Graphs associated with the terms of the associative identity $t \approx t'$ with $t := x_1 (x_2 x_3)$, $t' := (x_1 x_2) x_3$.}
\label{fig:associative}
\end{figure}
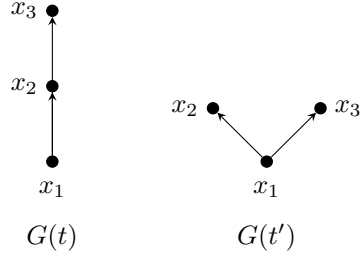

\begin{proposition}[{Poomsa\hyp{}ard~\cite[Proposition~2.2]{Poo-2000}}]
\label{prop:associative}
For any digraph $G = (V,E)$, the following statements are equivalent.
\begin{enumerate}[label={\upshape{(\roman*)}}]
\item\label{prop:associative:associative} $G$ is associative.
\item\label{prop:associative:edges} For any edge $(u,v) \in E$ and and any vertex $w \in V$, $(u,w) \in E$ if and only if $(v,w) \in E$.
\item The edge relation $E$ is transitive and for every $v \in V$, the subgraph induced by $\Nout[G]{v}$ is a complete graph.
\end{enumerate}
\end{proposition}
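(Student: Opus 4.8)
The plan is to prove the two equivalences \ref{prop:associative:associative}~$\Leftrightarrow$~\ref{prop:associative:edges} and \ref{prop:associative:edges}~$\Leftrightarrow$~(iii) separately. The first is where Proposition~\ref{prop:phi} does all the work, while the second is a purely relational computation that needs no homomorphisms at all.

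For \ref{prop:associative:associative}~$\Leftrightarrow$~\ref{prop:associative:edges}, I would first compute the two DFS trees associated with $t := x_1(x_2 x_3)$ and $t' := (x_1 x_2)x_3$. In $t$ the products occurring as subterms are $(x_2 x_3)$ and $x_1(x_2 x_3)$, so $G(t)$ has edges $(x_1,x_2)$ and $(x_2,x_3)$, i.e.\ the directed path $x_1 \to x_2 \to x_3$; in $t'$ the products are $(x_1 x_2)$ and $(x_1 x_2)x_3$, so $G(t')$ has edges $(x_1,x_2)$ and $(x_1,x_3)$, the star centred at $x_1$. Both trees satisfy $\var(t)=\var(t')=\{x_1,x_2,x_3\}$ and $L(t)=x_1=L(t')$, so Proposition~\ref{prop:phi} applies: $\GA{G}$ satisfies $t\approx t'$ iff for every $\varphi$ with $\varphi(x_1)=u$, $\varphi(x_2)=v$, $\varphi(x_3)=w$, one has $\varphi$ a homomorphism of $G(t)$ iff a homomorphism of $G(t')$. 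Spelling this out, the condition becomes: for all $u,v,w \in V$, $\bigl[(u,v)\in E$ and $(v,w)\in E\bigr]$ iff $\bigl[(u,v)\in E$ and $(u,w)\in E\bigr]$. When $(u,v)\notin E$ both sides are false, so this reduces exactly to ``for every edge $(u,v)$ and every $w$, $(v,w)\in E \Leftrightarrow (u,w)\in E$'', which is \ref{prop:associative:edges}.

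For \ref{prop:associative:edges}~$\Leftrightarrow$~(iii), I would first record that \ref{prop:associative:edges} is just the statement $\Nout[G]{u}=\Nout[G]{v}$ for every edge $(u,v)\in E$. Assuming \ref{prop:associative:edges}: transitivity follows because $(a,b),(b,c)\in E$ gives $c\in\Nout[G]{b}=\Nout[G]{a}$, hence $(a,c)\in E$; and each $\Nout[G]{v}$ induces a complete graph because for $a,b\in\Nout[G]{v}$ we get $\Nout[G]{v}=\Nout[G]{a}=\Nout[G]{b}$, so $b\in\Nout[G]{a}$ yields $(a,b)\in E$, and symmetrically $(b,a)\in E$, together with the loops $(a,a),(b,b)\in E$ coming from $a\in\Nout[G]{v}=\Nout[G]{a}$. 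Conversely, assuming (iii) and taking an edge $(u,v)$ with any $w$: if $(u,w)\in E$ then $v,w\in\Nout[G]{u}$ and completeness gives $(v,w)\in E$, while if $(v,w)\in E$ then transitivity gives $(u,w)\in E$, establishing the biconditional \ref{prop:associative:edges}.

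I do not expect a genuine obstacle here, as both halves are short; the result is essentially Poomsa\hyp{}ard's, and the only real content is the translation performed by Proposition~\ref{prop:phi}. The two points that need care are the \emph{vacuous case} $(u,v)\notin E$ when reducing the output of Proposition~\ref{prop:phi} to \ref{prop:associative:edges} (so that the biconditional is not accidentally strengthened), and the convention that ``complete graph'' in (iii) is understood \emph{with loops}, which is why I would explicitly extract the loops $(a,a),(b,b)$ in the \ref{prop:associative:edges}~$\Rightarrow$~(iii) direction.
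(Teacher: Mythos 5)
Your proof is correct and follows exactly the route the paper indicates: the paper gives no proof of its own beyond citing Poomsa\hyp{}ard and remarking that \ref{prop:associative:associative}~$\Leftrightarrow$~\ref{prop:associative:edges} can be verified by applying Proposition~\ref{prop:phi} to the DFS trees of Figure~\ref{fig:associative}, which is precisely your first half, and your elementary relational argument for \ref{prop:associative:edges}~$\Leftrightarrow$~(iii) fills in what the paper leaves entirely to the citation. You were also right to flag that ``complete graph'' in (iii) must be read \emph{with loops} (consistent with the paper's usage elsewhere, e.g.\ $1$\hyp{}whirls): without that convention the single edge $u \rightarrow v$ would satisfy (iii) while failing \ref{prop:associative:edges} at $w = v$, so your explicit extraction of the loops is exactly the needed care.
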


On the other extreme, we have the \emph{antiassociative} digraphs whose graph algebras
satisfy no nontrivial bracketing identities, i.e., the associative spectrum of 
$\GA{G}$ consists of the Catalan numbers.
The goal of this section is to characterize antiassociative digraphs.
To this end, we introduce some numerical parameters of bracketing identities
in terms of the corresponding DFS trees, and we prove several necessary conditions
for a graph algebra to satisfy a given bracketing identity. 

\begin{definition}
\label{def:H}\label{def:M}\label{def:L}
Let $t, t' \in B_n$, $t \neq t'$, and let $T := G(t)$, $T' := G(t')$.
\begin{enumerate}[label={\upshape{(\roman*)}}]
\item Let $\Htt := \min \{h(T), h(T')\}$.
\item Let $\Mtt$ be the largest integer $m$ such that $d_T(x_i) \equiv d_{T'}(x_i) \pmod{m}$ for all $x_i \in X_n$.
In other words, the depth sequences of $T$ and $T'$ are congruent modulo $\Mtt$.
\item Let $\Ltt$ be the largest integer $m$ such that for all $x_i \in X_n$,
\[
\bigl( d_T(x_i) \leq m \vee d_{T'}(x_i) \leq m \bigr) \implies d_T(x_i) = d_{T'}(x_i).
\]
In other words, the DFS trees $T$ and $T'$ are identical up to level $\Ltt$.
\end{enumerate}
Note that $0 \leq \Htt < n$ (with $\Htt = 0$ if and only if $n = 1$), $0 \leq \Ltt < \Htt$ and $1 \leq \Mtt \leq \Htt$.
\end{definition}

\begin{example}
\label{ex:HML}
Figure~\ref{fig:ex:HML} shows two DFS trees corresponding to certain terms $t, t' \in B_{20}$.
It is straightforward to verify that $\Htt = 6$, $\Mtt = 3$, and $\Ltt = 2$.
\end{example}
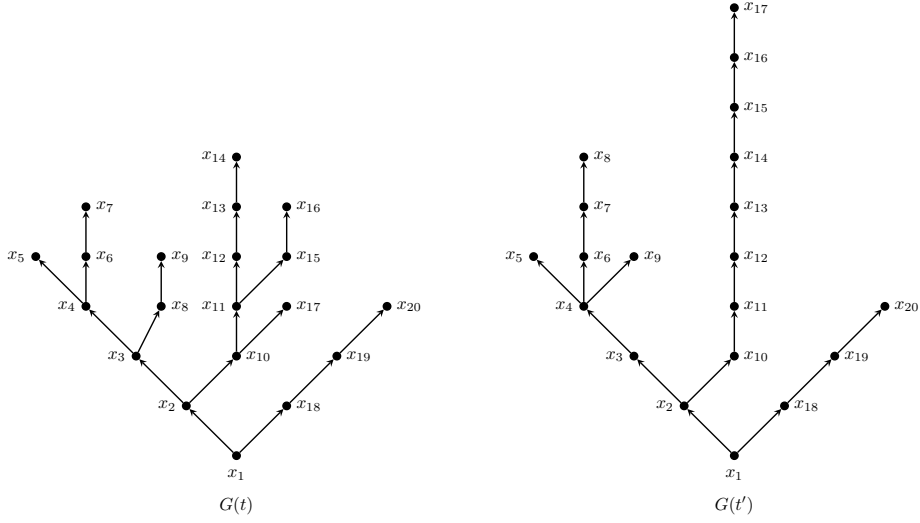
\begin{figure}
\tikzset{every node/.style={circle,draw,inner sep=1.5,fill=black}, every path/.style={->,>=stealth,thick}}
\scalebox{0.66}{
\begin{tikzpicture}[scale=1, baseline=(x1)]
\node[label=below:{$x_1$}] (x1) at (0,0) {};
\node[label=left:{$x_2$}] (x2) at (-1,1) {};
\node[label=left:{$x_3$}] (x3) at (-2,2) {};
\node[label=left:{$x_4$}] (x4) at (-3,3) {};
\node[label=left:{$x_5$}] (x5) at (-4,4) {};
\node[label=right:{$x_6$}] (x6) at (-3,4) {};
\node[label=right:{$x_7$}] (x7) at (-3,5) {};
\node[label=right:{$x_8$}] (x8) at (-1.5,3) {};
\node[label=right:{$x_9$}] (x9) at (-1.5,4) {};
\node[label=right:{$x_{10}$}] (x10) at (0,2) {};
\node[label=left:{$x_{11}$}] (x11) at (0,3) {};
\node[label=left:{$x_{12}$}] (x12) at (0,4) {};
\node[label=left:{$x_{13}$}] (x13) at (0,5) {};
\node[label=left:{$x_{14}$}] (x14) at (0,6) {};
\node[label=right:{$x_{15}$}] (x15) at (1,4) {};
\node[label=right:{$x_{16}$}] (x16) at (1,5) {};
\node[label=right:{$x_{17}$}] (x17) at (1,3) {};
\node[label=right:{$x_{18}$}] (x18) at (1,1) {};
\node[label=right:{$x_{19}$}] (x19) at (2,2) {};
\node[label=right:{$x_{20}$}] (x20) at (3,3) {};
\node[rectangle,draw=none,fill=none] () [below of=x1] {$G(t)$};
\path (x1) edge (x2);
\path (x2) edge (x3);
\path (x3) edge (x4);
\path (x4) edge (x5);
\path (x4) edge (x6);
\path (x6) edge (x7);
\path (x3) edge (x8);
\path (x8) edge (x9);
\path (x2) edge (x10);
\path (x10) edge (x11);
\path (x11) edge (x12);
\path (x12) edge (x13);
\path (x13) edge (x14);
\path (x11) edge (x15);
\path (x15) edge (x16);
\path (x10) edge (x17);
\path (x1) edge (x18);
\path (x18) edge (x19);
\path (x19) edge (x20);
\end{tikzpicture}
}
\qquad
\scalebox{0.66}{
\begin{tikzpicture}[scale=1, baseline=(x1)]
\node[label=below:{$x_1$}] (x1) at (0,0) {};
\node[label=left:{$x_2$}] (x2) at (-1,1) {};
\node[label=left:{$x_3$}] (x3) at (-2,2) {};
\node[label=left:{$x_4$}] (x4) at (-3,3) {};
\node[label=left:{$x_5$}] (x5) at (-4,4) {};
\node[label=right:{$x_6$}] (x6) at (-3,4) {};
\node[label=right:{$x_7$}] (x7) at (-3,5) {};
\node[label=right:{$x_8$}] (x8) at (-3,6) {};
\node[label=right:{$x_9$}] (x9) at (-2,4) {};
\node[label=right:{$x_{10}$}] (x10) at (0,2) {};
\node[label=right:{$x_{11}$}] (x11) at (0,3) {};
\node[label=right:{$x_{12}$}] (x12) at (0,4) {};
\node[label=right:{$x_{13}$}] (x13) at (0,5) {};
\node[label=right:{$x_{14}$}] (x14) at (0,6) {};
\node[label=right:{$x_{15}$}] (x15) at (0,7) {};
\node[label=right:{$x_{16}$}] (x16) at (0,8) {};
\node[label=right:{$x_{17}$}] (x17) at (0,9) {};
\node[label=right:{$x_{18}$}] (x18) at (1,1) {};
\node[label=right:{$x_{19}$}] (x19) at (2,2) {};
\node[label=right:{$x_{20}$}] (x20) at (3,3) {};
\node[rectangle,draw=none,fill=none] () [below of=x1] {$G(t')$};
\path (x1) edge (x2);
\path (x2) edge (x3);
\path (x3) edge (x4);
\path (x4) edge (x5);
\path (x4) edge (x6);
\path (x6) edge (x7);
\path (x7) edge (x8);
\path (x4) edge (x9);
\path (x2) edge (x10);
\path (x10) edge (x11);
\path (x11) edge (x12);
\path (x12) edge (x13);
\path (x13) edge (x14);
\path (x14) edge (x15);
\path (x15) edge (x16);
\path (x16) edge (x17);
\path (x1) edge (x18);
\path (x18) edge (x19);
\path (x19) edge (x20);
\end{tikzpicture}
}
\caption{DFS trees with $\Htt = 6$, $\Mtt = 3$, $\Ltt = 2$.}
\label{fig:ex:HML}
\end{figure}

\begin{lemma}
\label{lem:edge-to-L+1}
Let $t, t' \in B_n$, $t \neq t'$, and
let $G$ be a digraph such that $\GA{G}$ satisfies the identity $t \approx t'$.
Denote $H := \Htt$, $M := \Mtt$, $L := \Ltt$.
Then there exists an integer $r$ with $L + 1 \leq r \leq H$ and $r \equiv L \pmod{M}$ such that the following holds:
if $v_0 \rightarrow v_1 \rightarrow \dots \rightarrow v_H$ is a walk in $G$, then $v_r \rightarrow v_{L+1}$ is an edge in $G$.
In particular, $v_{L+1}$ belongs to a nontrivial strongly connected component.
\end{lemma}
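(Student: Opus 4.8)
The plan is to pin down the shallowest level at which $T$ and $T'$ disagree and then manufacture the edge $v_r\to v_{L+1}$ with a single application of Proposition~\ref{prop:phi}, via a collapsing map. First I would unwind Definition~\ref{def:L}: it says precisely that $L+1=\min\{\min(d_T(x_i),d_{T'}(x_i)) : d_T(x_i)\neq d_{T'}(x_i)\}$, so some vertex $x_d$ has $\min(d_T(x_d),d_{T'}(x_d))=L+1$. Fix such an $x_d$, let $T^{\circ}\in\{T,T'\}$ be a tree in which $d_{T^{\circ}}(x_d)=L+1$, and let $\widehat T$ be the other tree, so that $d_{\widehat T}(x_d)>L+1$. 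Let $x_q$ be the parent of $x_d$ in $\widehat T$; then $(x_q,x_d)\in E(\widehat T)$ and $d_{\widehat T}(x_q)=d_{\widehat T}(x_d)-1\ge L+1$, while the parent of $x_d$ in $T^{\circ}$ sits at level $L$, so $x_q$ is genuinely a ``new'' inneighbour of $x_d$.

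The main case is $h(T^{\circ})=H$, i.e.\ $T^{\circ}$ is a shortest of the two trees. Then the given walk $v_0\to\dots\to v_H$ has exactly the length required to collapse $T^{\circ}$, and Lemma~\ref{lem:TT'edges} (used with $T^{\circ}$ in the role of $T$, which is legitimate since $t\approx t'$ is symmetric) gives $(v_{d_{T^{\circ}}(a)},v_{d_{T^{\circ}}(b)})\in E(G)$ for every edge $(a,b)$ of $\widehat T$. Feeding in the edge $(x_q,x_d)$ yields $(v_r,v_{L+1})\in E(G)$ with $r:=d_{T^{\circ}}(x_q)$. The three numerical conditions are then routine verifications: $r\le h(T^{\circ})=H$ is automatic; $r\ge L+1$ holds because, by the maximality of $L$ in Definition~\ref{def:L}, a vertex with depth $\le L$ in one tree has equal depth in the other, so $d_{\widehat T}(x_q)\ge L+1$ forces $d_{T^{\circ}}(x_q)\ge L+1$; and $r\equiv L\pmod M$ follows from Definition~\ref{def:M} applied twice, $d_{T^{\circ}}(x_q)\equiv d_{\widehat T}(x_q)=d_{\widehat T}(x_d)-1\equiv d_{T^{\circ}}(x_d)-1=L\pmod M$.

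The remaining case $h(T^{\circ})>H$ is where I expect the real work, and is the main obstacle: now the split at level $L+1$ is visible only in the taller tree, and the given walk is too short to collapse $T^{\circ}$ directly. My plan is a two-step bootstrap. Step one produces a closed walk in $G$: collapsing the shorter tree $\widehat T$ on $W$ (Lemma~\ref{lem:TT'edges} again) turns the edges of $T^{\circ}$ into edges of $G$, and a deepest branch of $T^{\circ}$ runs below level $H$, so its vertices have $\widehat T$-depths confined to $\{0,\dots,H\}$; by pigeonhole two of them coincide, and the corresponding chain of derived edges closes into a closed walk among $v_0,\dots,v_H$. Step two collapses $T^{\circ}$ using a collapsing map of $(T^{\circ},x_{d'})$ on a pair $(W,W')$ as in the preliminaries, where $x_{d'}$ is chosen so that its $T^{\circ}$-subtree absorbs the over-deep part of $T^{\circ}$ and is wound around this closed walk $W'$, while $x_q$, $x_d$ and the top of $T^{\circ}$ remain collapsed along the original $v_0\to\dots\to v_H$; Proposition~\ref{prop:phi} then makes this a homomorphism of $\widehat T$ as well, and reading off $(x_q,x_d)$ again produces $v_r\to v_{L+1}$. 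The delicate point — the technical heart of the proof — is to arrange $x_{d'}$ and the attachment of $W'$ so that $x_q$ stays on the genuine walk; this is exactly what forces $r=d_{T^{\circ}}(x_q)\le H$, and proving that such a choice always exists (equivalently, that in this case $x_q$ never drops below level $H$ in $T^{\circ}$) is where the argument must be carried out with care.

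Finally, the ``in particular'' is immediate once $v_r\to v_{L+1}$ is known: since $L+1\le r\le H$, the walk already supplies the segment $v_{L+1}\to v_{L+2}\to\dots\to v_r$, and adjoining the edge $v_r\to v_{L+1}$ closes it into a closed walk of length $r-L\ge 1$ through $v_{L+1}$. Hence $v_{L+1}$ lies in a nontrivial strongly connected component.
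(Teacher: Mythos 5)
Your reduction to two cases and your treatment of the main case $h(T^{\circ})=H$ are correct and essentially identical to the paper's argument (your derivation of $r\geq L+1$ directly from the maximality of $L$ is in fact a bit cleaner than the paper's subtree argument), and your pigeonhole production of a closed walk in the second case is sound --- its length is even automatically divisible by $M$, since the $T^{\circ}$-depths along the deepest branch increase by $1$ at each step while staying congruent to the $\widehat T$-depths modulo $M$. The genuine gap is in your step two. The claim you flag as the technical heart --- that one can keep $x_q$ on the genuine walk, equivalently that $d_{T^{\circ}}(x_q)\leq H$ whenever $h(T^{\circ})>H$ --- is false. Take $n=7$, let $T$ consist of the path $x_1\rightarrow x_2\rightarrow x_3\rightarrow x_4\rightarrow x_5$ together with $x_1\rightarrow x_6\rightarrow x_7$ (depth sequence $(0,1,2,3,4,1,2)$), and let $T'$ consist of $x_1\rightarrow x_2\rightarrow x_3\rightarrow x_4$, $x_2\rightarrow x_5\rightarrow x_6$, and $x_2\rightarrow x_7$ (depth sequence $(0,1,2,3,2,3,2)$). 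Here $H=3$, $M=2$, $L=0$, and among the vertices with differing depths the value $L+1$ is attained only by $x_6$, so $x_d=x_6$ is forced; then $T^{\circ}=T$ has height $4>H$, and $x_q=x_5$ has $d_{T^{\circ}}(x_q)=4>3=H$, so $x_q$ cannot be placed at position $d_{T^{\circ}}(x_q)$ of any walk of length $H$, and no re-choice of $x_d$ is available. (A secondary defect of the same plan: the vertices of $T^{\circ}$ of depth exceeding $H$ need not lie in a single subtree $T^{\circ}_{x_{d'}}$, whereas the collapsing-map gadget of the preliminaries winds only one subtree around an auxiliary closed walk.)

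The repair is exactly where your plan and the paper's proof diverge: the paper does not try to keep $x_q$ on the given walk. It rebuilds a single walk $W'$ of full length $h(T^{\circ})$ --- follow $W$ up to the entry point of the closed walk $C$, then cycle around $C$ indefinitely --- and collapses the \emph{entire} tree $T^{\circ}$ onto $W'$, with no piecewise map at all. Reading off the edge $(x_q,x_d)$ of $\widehat T$ then yields $v_r\rightarrow v_{L+1}$, where $r$ is the \emph{wrapped} index, i.e., $d_{T^{\circ}}(x_q)$ reduced modulo the length $D$ of $C$ into the window of positions occupied by $C$; since $M\divides D$, the congruence $r\equiv d_{T^{\circ}}(x_q)\equiv L\pmod{M}$ survives the reduction, and $r\leq H$ holds because that window lies within $\{L+1,\dots,H\}$. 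In the example above this gives $r=2$, not $4$. This is precisely why the lemma asserts only $r\equiv L\pmod{M}$ rather than $r=d_{T^{\circ}}(x_q)$: in the tall case the conclusion is achievable only for the wrapped position, and your insistence on $r=d_{T^{\circ}}(x_q)$ makes the step-two construction unrealizable.
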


\begin{proof}
By the definition of $L$, there exists a vertex $x_d \in X_n$ such that either $d_T(x_d) = L + 1 < d_{T'}(x_d)$ or $d_{T'}(x_d) = L + 1 < d_T(x_d)$.
By changing the roles of $T$ and $T'$, if necessary, we may assume that $d_T(x_d) = L + 1 < d_{T'}(x_d)$.
Let $x_p$ be the parent of $x_d$ in $T$, and let $x_q$ be the parent of $x_d$ in $T'$.
Then $d_T(x_p) = L$, and it follows from
Definition~\ref{def:DFS}
that $V(T_{x_p}) = V(T'_{x_p})$ because the trees $T$ and $T'$ are identical up to level $L$.
Since $x_d \in V(T_{x_p}) = V(T'_{x_p})$ and $d_{T'}(x_d) > L + 1$, we have $x_q \in V(T'_{x_p}) = V(T_{x_p})$ and $d_{T'}(x_q) = d_{T'}(x_d) - 1 \geq L+1$, so $x_q \neq x_p$; hence $d_T(x_q) \geq L + 1$.
Furthermore, by the definition of $M$, it holds that
$d_T(x_q) \equiv d_{T'}(x_q) = d_{T'}(x_d) - 1 \equiv d_T(x_d) - 1 = L \pmod{M}$.

Write $h := h(T)$, $h' := h(T')$, and consider first the case that $h \leq h'$, so $h = H$.
In this case, the statement holds with $r := d_T(x_q)$, because $L+1 \leq r = d_T(x_q) \leq h = H$ and $r = d_T(x_q) \equiv L \pmod{M}$, and
by Lemma~\ref{lem:TT'edges}, it holds that if $v_0 \rightarrow v_1 \rightarrow \dots \rightarrow v_H$ is a walk in $G$, then
$(v_{d_T(x_q)}, v_{d_T(x_d)}) = (v_r, v_{L+1}) \in E(G)$.

Consider now the case that $h > h'$, so $h' = H$.
Let $u_0 \rightarrow u_1 \rightarrow \dots \rightarrow u_h$ be a longest path in $T$, and write $d_i := d_{T'}(u_i)$ for $i \in \{0, \dots, h\}$.
Now, since $h > h'$, the sequence $d_0, d_1, \dots, d_h$ cannot be strictly increasing, so there exists an index $j$ with $d_j \geq d_{j+1}$.
Note that $d_{j+1} \geq L+1$, because the trees $T$ and $T'$ are identical up to level $L$.

Assume that $W \colon v_0 \rightarrow v_1 \rightarrow \dots \rightarrow v_H$ is a walk in $G$.
By Lemma~\ref{lem:TT'edges}, $(v_{d_j}, v_{d_{j+1}}) \in E(G)$; consequently
$C \colon v_{d_{j+1}} \rightarrow v_{d_{j+1} + 1} \rightarrow \dots \rightarrow v_{d_j} \rightarrow v_{d_{j+1}}$ is a closed walk in $G$.
Now, let $W'$ be the walk in $G$ that starts with $v_0 \rightarrow v_1 \rightarrow \dots \rightarrow v_{d_{j+1}}$, and then it continues around the closed walk $C$ until it reaches length $h$.
More precisely, $W'$ is the walk $v'_0 \rightarrow v'_1 \rightarrow \dots \rightarrow v'_h$
with
$v'_i := v_{i^*}$, where
$i^*$ is the largest integer $m$ such that $m \leq \min(i, d_j)$ and $m \equiv i \pmod{D}$, where $D := d_j - d_{j+1} + 1$.
By Lemma~\ref{lem:TT'edges}, $(v'_{d_T(x_q)}, v'_{d_T(x_d)}) = (v_r, v_{L+1}) \in E(G)$, where
$r := (d_T(x_q))^*$.
By definition, we have $L + 1 \leq r \leq d_j \leq h' = H$
and $r \equiv d_T(x_q) \pmod{D}$.
Furthermore,
\[
D = d_{T'}(u_j) - d_{T'}(u_{j+1}) + 1
\equiv d_T(u_j) - d_T(u_{j+1}) + 1
= 0 \pmod{M},
\]
so $M \divides D$, and it follows that
$r \equiv d_T(x_q) \equiv L \pmod{M}$.

Now we have a closed walk $v_{L+1} \rightarrow \dots \rightarrow v_r \rightarrow v_{L+1}$ in $G$.
This means, in particular, that $v_{L+1}$ belongs to a nontrivial strongly connected component.
\end{proof}

The next lemma generalizes Lemma~\ref{lem:loops}\ref{lem:loops:refl-symm}.

\begin{lemma}
\label{lem:m-cycle-out}
Let $t, t' \in B_n$, $t \neq t'$, and
let $G$ be a digraph such that $\GA{G}$ satisfies the identity $t \approx t'$.
Denote $H := \Htt$, $M := \Mtt$, $L := \Ltt$.
If $m$ is a divisor of $M$,
$U \colon u_0 \rightarrow u_1 \rightarrow \dots \rightarrow u_{m-1} \rightarrow u_0$ is a closed walk in $G$, $u_0 \rightarrow w$ is an edge, and $G$ contains arbitrarily long walks with initial vertex $w$, then $w \rightarrow u_2$ if $m > 2$ and $w \rightarrow u_0$ if $1 \leq m \leq 2$.
\end{lemma}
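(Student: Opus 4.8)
The plan is to mimic the proof of Lemma~\ref{lem:loops}\ref{lem:loops:refl-symm}, using the same vertex $x_d$ with distinct parents $x_p$ in $T$ and $x_q$ in $T'$ where $p<q<d$, and the same mechanism: I construct a suitable walk $W \colon v_0 \rightarrow \dots \rightarrow v_H$ together with a short walk $W'$ out of the edge $v_{s-1} \rightarrow u_0$, form the collapsing map $\varphi$ of $(T,x_d)$ on $(W,W')$, and conclude via Proposition~\ref{prop:phi} that $(\varphi(x_q),\varphi(x_d)) = (v_r, u_0)$ is an edge, where $r := d_T(x_q)$. The congruence information I need is the same as before: $d_T(x_p)=s-1$, and by the definition of $M$ we have $s-1 = d_T(x_d)-1 \equiv d_{T'}(x_d)-1 = d_{T'}(x_q) \equiv d_T(x_q) = r \pmod{M}$, hence $r \equiv s-1 \pmod{M}$, and so also $r \equiv s-1 \pmod m$ since $m \divides M$.

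First I would build the walk $W$ so that its vertices cycle through the closed walk $U$ of length $m$. Specifically, I set $v_i := u_{i \bmod m}$ for $0 \leq i \leq r-1$, then continue from $v_{r-1}$ so that $v_r$ onward traverses a walk starting at $w$ of length $H-r$ (such arbitrarily long walks from $w$ exist by hypothesis). I must check that $W$ is genuinely a walk at the splice: I need $v_{r-1} \rightarrow w$, i.e.\ $u_{(r-1)\bmod m} \rightarrow w$. Because the edge $u_0 \rightarrow w$ is given, I want $(r-1) \equiv 0 \pmod m$, which holds precisely because $r \equiv s-1 \pmod m$ forces $r-1 \equiv s-2 \pmod m$; here I need to be careful about the exact index, so I would instead align the cycle so that the vertex preceding the switch to $w$ is $u_0$. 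The cleanest arrangement is to set things up so the collapsing edge $v_{s-1} \rightarrow u_0$ uses $v_{s-1} = u_0$ (valid since $s-1 \equiv r \equiv 0 \pmod m$ after shifting the cycle's starting offset), and then pick $W'$ to be the short walk $u_0 \rightarrow u_1 \rightarrow u_2$ (a sub-walk of $U$) when $m>2$, or the single edge/vertex giving $u_0$ when $m \leq 2$.

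The key point is then to read off the image of $x_d$. The subtree $T_{x_d}$ is collapsed on $W'$, so $\varphi(x_d) = u_{d_{T}(x_d) - (s-1)} = u_{(s)-(s-1)}=u_1$ at the child level; more importantly the target vertex $u_0$ of the edge $v_{s-1}\rightarrow u_0$ sits at the depth of $x_d$, and the edge we extract, $(\varphi(x_q),\varphi(x_d))=(v_r, \varphi(x_d))$, has $v_r$ equal to the appropriate cycle vertex. Tracking residues, $v_r = u_{r \bmod m}$; since the switch to the $w$-walk happens at the vertex whose $U$-index is $0$, and the edge $(x_q,x_d)\in E(T')$ produces an edge from $v_r$ into the $W'$-part, the endpoint is $w$. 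This yields $w \rightarrow \varphi(x_d)$, and by choosing $W'$ to route along $U$ I arrange $\varphi(x_d)=u_2$ when $m>2$ (using that $T_{x_d}$ has height at least reaching depth two, or collapsing onto the closed walk $U$ itself so that depths wrap) and $\varphi(x_d)=u_0$ when $1\le m\le 2$. The main obstacle, and the part requiring genuine care, is the bookkeeping of indices modulo $m$ at the three junctions (the start of $W$, the splice into $w$, and the landing vertex of the extracted $(x_q,x_d)$-edge); once the residues $r \equiv s-1 \pmod m$ are used consistently, the existence of the required edge is forced by Proposition~\ref{prop:phi} exactly as in Lemma~\ref{lem:loops}. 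The distinction between the cases $m>2$ and $m\le 2$ arises solely because $u_2$ coincides with $u_0$ when $m\le 2$, so the two conclusions are really one statement read modulo the length of the cycle.
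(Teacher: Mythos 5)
Your strategy---bypassing Lemma~\ref{lem:edge-to-L+1} and re-running the collapsing-map mechanism from the proof of Lemma~\ref{lem:loops} directly, with $r := d_T(x_q)$, $s := d_T(x_d)$, and a walk $W$ that wraps around $U$ before switching to a walk out of $w$---is viable, and close in spirit to the paper's proof (the paper instead quotes Lemma~\ref{lem:edge-to-L+1} to obtain an $r$ with $r \equiv L \pmod{M}$ that works for \emph{every} walk of length $H$, and then feeds it the same wrapped walk). But your index bookkeeping, which you yourself identify as the crux, fails at the decisive junction. Having correctly aligned the cycle so that $v_{r-1} = u_0$, i.e.\ $v_i = u_{(i-r+1) \bmod m}$ for $0 \leq i \leq r-1$, the congruence $r \equiv s-1 \pmod{m}$ gives $v_{s-1} = u_{(s-r) \bmod m} = u_1$, \emph{not} $u_0$ as you claim: your justification ``$s-1 \equiv r \equiv 0 \pmod m$'' asserts $r \equiv 0 \pmod{m}$, which is unsupported---indeed, the very reason you had to shift the cycle's offset is that nothing controls $r$ modulo $m$. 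Since $v_{s-1} = u_1$, the walk $W'$ must start at an outneighbour of $u_1$ on the cycle, i.e.\ at $u'_0 = u_2$, whence $\varphi(x_d) = u_2$ and the extracted edge $(\varphi(x_q), \varphi(x_d)) = (v_r, u'_0) = (w, u_2)$ is exactly the claimed one. Your computed values $v_{s-1} = u_0$ and $\varphi(x_d) = u_1$ would instead deliver $w \rightarrow u_1$, a different (and in general false) conclusion; and your line ``$v_r = u_{r \bmod m}$'' contradicts your own construction, in which $v_r = w$.

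Two further repairs are needed. First, $W'$ must be long enough to collapse the whole subtree $T_{x_d}$: your primary choice, the two-step walk $u_0 \rightarrow u_1 \rightarrow u_2$, fails whenever $h(T_{x_d}) > 2$. The correct choice, which you mention only parenthetically and hedge with an unneeded height assumption, is the closed walk $U$ re-based at $u_2$, so that depths in $T_{x_d}$ wrap modulo $m$. Second, the tail of $W$ should have length $h(T) - r$, not $H - r$: the collapsing map of $(T,x_d)$ on $(W,W')$ requires $W$ to have length $h(T)$, which may exceed $H = \min\bigl(h(T), h(T')\bigr)$. This is harmless here because $G$ has arbitrarily long walks from $w$---and it is precisely this extra hypothesis that lets your direct approach evade the wrap-around case analysis inside Lemma~\ref{lem:edge-to-L+1}---but as written your $W$ can be too short to collapse $T \setminus T_{x_d}$ on. With these corrections ($v_{s-1} = u_1 \rightarrow u_2 = u'_0$, $W'$ the closed walk $U$ re-based at $u_2$, tail of length $h(T)-r$), your argument closes and yields $w \rightarrow u_2$, which coincides with $u_0$ when $1 \leq m \leq 2$.
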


\begin{proof}
For $m = 1$ this is Lemma~\ref{lem:loops}\ref{lem:loops:refl-symm}.
Assume that $m \geq 2$.
Let $r$ be the number provided by
Lemma~\ref{lem:edge-to-L+1},
and let $v_0 \rightarrow v_1 \rightarrow \dots \rightarrow v_H$ be a walk that starts by going around the closed walk $U$ so that $v_{r-1} = u_0$
(i.e., $v_i := u_{i - r + 1 \bmod m}$ for $0 \leq i \leq r-1$) and continues with a walk of length $H-r$ with initial vertex $w$.
By
Lemma~\ref{lem:edge-to-L+1},
$v_r \rightarrow v_{L+1}$ is an edge.
We have $v_r = w$ and $v_{L+1} = u_{L + 1 - r + 1 \bmod m}$.
Since $r \equiv L \pmod{M}$ and $m \divides M$, we have $r \equiv L \pmod{m}$;
hence $L + 1 - r + 1 \equiv 2 \mod{m}$.
Therefore $w \rightarrow u_2$ is an edge if $m > 2$ and $w \rightarrow u_0$ is an edge if $m = 2$.
\end{proof}

\begin{lemma}
\label{lem:m-cycle}
Let $t, t' \in B_n$, $t \neq t'$.
Then for every $m \in \IN_+$, the directed $m$\hyp{}cycle $C_m$ satisfies $t \approx t'$ if and only if $m$ is a divisor of $\Mtt$.
\end{lemma}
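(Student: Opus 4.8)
The plan is to apply Proposition~\ref{prop:phi} after determining exactly what the homomorphisms of a DFS tree into $C_m$ look like. The directed $m$-cycle $C_m$ has vertex set $\ZZ_m = \{0, 1, \dots, m-1\}$ with edges $i \rightarrow i+1 \pmod{m}$, so every edge of $C_m$ increases the label by $1$ modulo $m$. First I would observe that a graph homomorphism $\varphi$ of a DFS tree $T$ into $C_m$ is completely rigid: since $T$ is connected with root $x_1$ and each edge $(x_i, x_j) \in E(T)$ joins a vertex to a child one level deeper, edge preservation forces $\varphi(x_j) \equiv \varphi(x_i) + 1 \pmod{m}$, and hence, by induction along the unique root path, $\varphi(x_i) \equiv \varphi(x_1) + d_T(x_i) \pmod{m}$. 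Conversely, for each choice $a := \varphi(x_1) \in \ZZ_m$ the map $\varphi_a^T \colon x_i \mapsto (a + d_T(x_i)) \bmod m$ is readily checked to preserve edges. Thus the homomorphisms $T \to C_m$ are exactly the $m$ maps $\{\varphi_a^T : a \in \ZZ_m\}$, indexed by the image of the root (they are pairwise distinct since they differ at $x_1$).

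Next I would invoke Proposition~\ref{prop:phi}. Since $t, t' \in B_n$ satisfy $\var(t) = \var(t') = X_n$ and $L(t) = L(t') = x_1$, the proposition says that $\GA{C_m}$ satisfies $t \approx t'$ if and only if the maps $X_n \to \ZZ_m$ that are homomorphisms of $T := G(t)$ coincide with those that are homomorphisms of $T' := G(t')$. Using the description from the first step, I would show that these two families coincide exactly when $d_T(x_i) \equiv d_{T'}(x_i) \pmod{2}$ holds modulo $m$, i.e.\ $d_T(x_i) \equiv d_{T'}(x_i) \pmod{m}$ for all $x_i \in X_n$. The forward implication is immediate: if the depths agree modulo $m$, then $\varphi_a^T = \varphi_a^{T'}$ for every $a$, so the two families are literally the same $m$ maps. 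For the converse, I would feed the single homomorphism $\varphi_0^T$ (the case $a = 0$) through the proposition: it must then also be a homomorphism of $T'$, hence equal to $\varphi_b^{T'}$ for some $b$; evaluating at the root $x_1$ gives $0 = d_T(x_1) = b + d_{T'}(x_1) = b$, so $b = 0$ and therefore $d_T(x_i) \equiv d_{T'}(x_i) \pmod{m}$ for all $i$.

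Finally I would translate this depth condition into a divisibility statement about $\Mtt$. The condition $d_T(x_i) \equiv d_{T'}(x_i) \pmod{m}$ for all $i$ is the same as $m$ dividing every difference $d_T(x_i) - d_{T'}(x_i)$, which in turn is equivalent to $m$ dividing their greatest common divisor. Because $t \neq t'$, Proposition~\ref{prop:unique-depth-seq} guarantees that the two depth sequences are not identical, so this gcd is a positive integer, and by the very definition of $\Mtt$ (the largest modulus for which all depths agree) it equals $\Mtt$. Hence $m \divides \Mtt$ if and only if the depths agree modulo $m$ for all $i$, which closes the chain of equivalences.

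I do not expect a serious obstacle: the entire argument is driven by the rigidity of homomorphisms into a directed cycle, and the rest is bookkeeping. The only points needing a little care are the degenerate cases $m = 1$ (a single looped vertex) and $m = 2$ (a symmetric pair), but the uniform formula $\varphi_a^T(x_i) = (a + d_T(x_i)) \bmod m$ covers these without change, and the gcd characterization of $\Mtt$ is exactly its definition, so no separate case analysis is required.
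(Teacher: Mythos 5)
Your proposal is correct and follows essentially the same route as the paper: both arguments rest on the rigidity of homomorphisms into $C_m$ (every homomorphism is a collapsing map $x_i \mapsto (a + d_T(x_i)) \bmod m$), apply Proposition~\ref{prop:phi}, pin down the shift by evaluating at the root $x_1$, and then convert the congruence of depth sequences into $m \divides \Mtt$ (your explicit gcd reading of $\Mtt$, justified via Proposition~\ref{prop:unique-depth-seq}, is just a spelled-out version of the paper's appeal to the definition of $\Mtt$). The lone blemish is the stray ``$\pmod{2}$'' in your second paragraph, which you immediately correct to ``$\pmod{m}$''; it is a typo, not a gap.
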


\begin{proof}
Denote $M := \Mtt$.
For notational simplicity, we suppose that $V(C_m) = \ZZ_m$ and
for all $i, j \in \ZZ_m$, $(i,j) \in E(C_m)$ if and only if $j \equiv i + 1 \pmod{m}$.

Assume first that $m \divides M$.
By the definition of $M$, we have $d_T(x_i) \equiv d_{T'}(x_i) \pmod{M}$ for all $x_i \in X_n$.
Since $m \divides M$, this implies $d_T(x_i) \equiv d_{T'}(x_i) \pmod{m}$ for all $x_i \in X_n$.
Let $\varphi \colon T \to C_m$ be a homomorphism.
Then $\varphi$ is necessarily of the form $x_i \mapsto (d_T(x_i) + k) \bmod{m}$ for some fixed $k \in \ZZ_m$ (that is, $\varphi$ collapses $T$ onto $C_m$).
Then for every edge $(x_i, x_j)$ of $T'$, we have
\begin{multline*}
\varphi(x_j)
\equiv d_T(x_j) + k
\equiv d_{T'}(x_j) + k
= d_{T'}(x_i) + 1 + k
\\
\equiv d_T(x_i) + k + 1
\equiv \varphi(x_i) + 1
\pmod{m},
\end{multline*}
so $(\varphi(x_i), \varphi(x_j))$ is an edge of $C_m$.
Therefore $\varphi$ is a homomorphism of $T'$ into $C_m$.
A similar argument shows that every homomorphism $\varphi \colon T' \to C_m$ is a homomorphism $T \to C_m$.
By Proposition~\ref{prop:phi}, $C_m$ satisfies $t \approx t'$.

Assume now that $C_m$ satisfies $t \approx t'$.
Let $\varphi \colon T \to C_m$ be the collapsing map of $T$ on $C_m$ with $\varphi(x_i) = d_T(x_i) \bmod{m}$.
By Proposition~\ref{prop:phi}, $\varphi$ is a homomorphism $T' \to C_m$.
Since the only homomorphisms of $T'$ to $C_m$ are collapsing maps $x_i \mapsto (d_{T'}(x_i) + k) \bmod m$ for some $k \in \ZZ_m$, and since $\varphi(x_1) = d_T(x_1) = 0 = d_{T'}(x_1)$, it follows that $d_{T'}(x_i) \equiv d_T(x_i) \pmod{m}$ for all $x_i \in X_n$.
From the definition of $M$ it follows that $m \divides M$.
\end{proof}

\begin{definition}
\label{def:MG}
A digraph $G = (V,E)$ is called an \emph{$m$\hyp{}whirl} ($m \in \IN_+$), if there exists a partition $\{B_0, \dots, B_{m-1}\}$ of $V$ such that for all $x, y \in V$, $(x,y) \in E$ if and only if $x \in B_i$ and $y \in B_{i+1}$ for some $i \in \{0, \dots, m-1\}$ (addition modulo $m$).
The sets $B_i$ are referred to as the \emph{blocks} of $G$.
We say that blocks $B_i$ and $B_j$ are \emph{consecutive} if $j \equiv i + 1 \pmod{m}$;
then $B_i$ is called the \emph{predecessor} of $B_j$ and $B_j$ is called the \emph{successor} of $B_i$.
A digraph is called a \emph{whirl} if it is an $m$\hyp{}whirl for some $m \in \IN_+$.

In other words, an $m$\hyp{}whirl $G$ is a strong homomorphic preimage of the directed $m$\hyp{}cycle $C_m$.
By definition, $1$\hyp{}whirls are precisely the complete graphs with loops,
and $2$\hyp{}whirls are precisely the complete bipartite graphs.
(Note the role of $1$\hyp{} and $2$\hyp{}whirls in Theorem~\ref{thm:main-undirected}.)
\end{definition}

\begin{lemma}
\label{lem:M}
Let $t, t' \in B_n$, $t \neq t'$, and
let $G$ be a digraph such that $\GA{G}$ satisfies the identity $t \approx t'$.
Then every strongly connected component of $G$ is either trivial or
an $m$\hyp{}whirl for some divisor $m$ of $\Mtt$.
\end{lemma}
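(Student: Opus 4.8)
The plan is to fix a nontrivial strongly connected component $K$ of $G$ and show that it is an $m$\hyp{}whirl, where $m$ is the period of $K$ (the greatest common divisor of the lengths of all cycles in $K$), and that $m \divides M$, writing $M := \Mtt$. First I would observe that $K$, viewed as an induced subgraph, again satisfies $t \approx t'$: by Proposition~\ref{prop:phi} a map $\varphi \colon X_n \to V(K)$ is a homomorphism of $T := G(t)$ into $K$ if and only if it is one into $G$ (all edges tested lie inside $V(K)$ and $K$ is induced), and likewise for $T' := G(t')$; since $G$ satisfies the identity, so does $K$. As $K$ is a nontrivial strongly connected component, every vertex of $K$ lies on a cycle, so $K$ (hence $G$) contains arbitrarily long walks with any prescribed initial vertex, which is exactly the standing hypothesis needed to invoke Lemmas~\ref{lem:edge-to-L+1} and~\ref{lem:m-cycle-out}. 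Fixing a base vertex $v_0 \in V(K)$ and colouring each $v$ by the common residue $c(v)$ modulo $m$ of the lengths of all walks from $v_0$ to $v$ (well defined precisely because $m$ is the period), I obtain a partition $B_0, \dots, B_{m-1}$ with $B_i := \{v : c(v) = i\}$; by construction every edge of $K$ runs from some $B_i$ to its successor $B_{i+1}$.

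Next I would dispose of the two easy requirements. The inclusion ``edges only go forward'' is immediate from the colouring above. To see that $m \divides M$, I take a cycle of minimum length $\ell$ in $K$; a shortest directed cycle is necessarily chordless, so it spans an induced copy of the directed cycle $C_\ell$ inside $G$. By the induced\hyp{}subgraph argument of the first paragraph this $C_\ell$ itself satisfies $t \approx t'$, whence Lemma~\ref{lem:m-cycle} gives $\ell \divides M$; since the period divides every cycle length it divides $\ell$, and therefore $m \divides M$.

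The substantial step is \emph{completeness}: I must show that for all $i$ and all $u \in B_i$, $w \in B_{i+1}$ one has $(u,w) \in E(K)$, so that the partition realises $K$ as a strong homomorphic preimage of $C_m$ in the sense of Definition~\ref{def:MG}. My plan is to produce these edges by feeding chordless cycles into Lemma~\ref{lem:m-cycle-out}. Any chordless cycle $U \colon u_0 \to \dots \to u_{m'-1} \to u_0$ has length $m' \divides M$ (as above), and for any out\hyp{}neighbour $x$ of $u_0$ the lemma yields the edge $x \to u_2$ (or $x \to u_0$ when $m' \le 2$); note that $x \in B_{c(u_0)+1}$ and $u_2 \in B_{c(u_0)+2}$, so this is exactly an edge between consecutive blocks. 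Varying the cycle $U$ and the out\hyp{}neighbour $x$, and using strong connectivity to route any prescribed pair $(u,w)$ with $c(w) \equiv c(u)+1 \pmod{m}$ into such a configuration, I would manufacture every required consecutive\hyp{}block edge, bootstrapping from the base cycle in the same spirit as the passage from a local three\hyp{}step shortcut to full completeness in Lemma~\ref{lem:complete-bipartite}.

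I expect this last step to be the main obstacle, for two linked reasons. First, Lemma~\ref{lem:m-cycle-out} only delivers edges hung on a \emph{pre\hyp{}existing} closed walk whose length divides $M$; one must argue that enough chordless cycles are available and can be positioned, via strong connectivity, so that their ``two\hyp{}steps\hyp{}ahead'' edges sweep out all of $B_i \times B_{i+1}$. Second, there is a genuine tension between needing the period $m$ and only having cycles of length a (possibly larger) multiple of $m$ on hand: a closed walk of length exactly $m$, namely a transversal of the blocks, is not available a priori and in fact only emerges once completeness is established. Resolving this tension, that is, showing the colouring is consistent enough that the filled\hyp{}in edges close up into a length\hyp{}$m$ transversal so that girth and period coincide, is the crux. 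Once completeness holds, $K$ is by definition an $m$\hyp{}whirl with $m \divides M$, and since every strongly connected component is either trivial or such a $K$, the lemma follows.
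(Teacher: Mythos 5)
Your setup is sound as far as it goes: the restriction of a graph\hyp{}algebra identity to an induced subgraph (which usefully makes explicit a step the paper leaves implicit), the observation that a shortest cycle is chordless and hence an induced copy of a directed cycle whose length divides $\Mtt$ by Lemma~\ref{lem:m-cycle}, and the cyclic colouring giving a partition in which every edge runs from a block to its successor. But the entire content of the lemma is the completeness step -- that \emph{every} pair in $B_i \times B_{i+1}$ is an edge -- and there you offer only a plan together with an honest admission that you cannot resolve its central difficulty. As written, the proof has a genuine gap exactly at its crux. Moreover, the ``tension'' you identify is created by your own choice of parameter: you fix $m$ to be the \emph{period} of $K$, so no closed walk of length $m$ is available to feed into Lemma~\ref{lem:m-cycle-out}; you only have cycles whose lengths are multiples of $m$, and among those only the chordless ones are known to have length dividing $\Mtt$ (which the hypothesis of Lemma~\ref{lem:m-cycle-out} requires), with no mechanism for making their ``two\hyp{}steps\hyp{}ahead'' edges sweep out all consecutive\hyp{}block pairs.

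The paper dissolves this by a different choice of $m$ and a bootstrap your sketch is missing. It takes $m$ to be the \emph{girth} of $K$, the length of a shortest cycle $C$, so that $m \divides \Mtt$ holds at the outset, and it defines the blocks not by a colouring but as outneighbourhoods along $C$: $B_i := \Nout[K]{i-1}$. The key trick is that Lemma~\ref{lem:m-cycle-out} needs only a \emph{closed walk} whose length divides $\Mtt$, not a cycle: once the first edges $v \rightarrow i+1$ (for $v \in B_i$) are extracted from the walk around $C$, each newly derived edge is spliced back into $C$ to form further closed walks of length exactly $m$ (for instance $i-1 \rightarrow i \rightarrow w \rightarrow i+2 \rightarrow \dots \rightarrow i-1$), and repeated applications of Lemma~\ref{lem:m-cycle-out} to these spliced walks yield first $B_{i+1} \subseteq \Nout[K]{v}$ and then $\Nout[K]{v} \subseteq B_{i+1}$ for every $v \in B_i$; disjointness of the blocks then follows from the minimality of $C$ (an overlap would produce a shorter cycle), and covering follows from strong connectivity. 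Note also that the cases $m = 1$ and $m = 2$ require separate treatment via Lemma~\ref{lem:loops} (reflexivity, symmetry and transitivity for $m=1$; symmetry plus Lemma~\ref{lem:complete-bipartite} for $m=2$, where $\Mtt$ being even supplies the parity hypotheses of parts \ref{lem:loops:even-symm} and \ref{lem:loops:even-shortcut}), since the conclusion of Lemma~\ref{lem:m-cycle-out} degenerates there; your uniform chordless\hyp{}cycle scheme does not account for this. With the paper's choice of $m$, girth and period coincide only a posteriori -- which is precisely the fact your period\hyp{}based colouring needed in advance.
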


\begin{proof}
Let $K$ be a nontrivial strongly connected component of $G$.
Every vertex of $K$ lies on a cycle contained in $K$;
let $C$ be a shortest cycle in $K$, and assume that $C$ has length $m$.
Lemma~\ref{lem:m-cycle} implies that $m$ is a divisor of $\Mtt$.
We want to show that $K$ is an $m$\hyp{}whirl.

Assume first that $m = 1$.
Then $K$ contains a vertex $u$ with a loop.
Let $v$ be an arbitrary vertex of $K$.
By strong connectivity, there exists a path from $u$ to $v$.
Since every vertex is contained in a cycle (again by strong connectivity), we can deduce with the help of Lemma~\ref{lem:loops}\ref{lem:loops:refl-symm} that every vertex along the path from $u$ to $v$ has a loop.
We conclude that the edge relation of $K$ is reflexive.
The reflexivity of the edge relation and Lemma~\ref{lem:loops}\ref{lem:loops:refl-symm} imply immediately that the edge relation is symmetric.
Now Lemma~\ref{lem:loops}\ref{lem:loops:trans} implies in turn that the edge relation is transitive.
We conclude that $K$ is a complete graph with loops, i.e., a $1$\hyp{}whirl.

Assume now that $m = 2$.
Then $\Mtt$ is even, $K$ contains no loop, and there is a cycle of length $2$ in $K$, i.e., vertices $u$, $v$ with $(u,v), (v,u) \in E(G)$.
With the help of strong connectivity and Lemma~\ref{lem:loops}\ref{lem:loops:even-symm}, we can deduce that $K$ is undirected.
It now follows from Lemmas~\ref{lem:complete-bipartite} and \ref{lem:loops}\ref{lem:loops:even-shortcut} that $K$ is a complete bipartite graph, i.e., a $2$\hyp{}whirl.

From now on, assume that $m > 2$.
For notational simplicity, suppose that $C = \ZZ_m$
and for all $i, j \in \ZZ_m$, $i \rightarrow j$ is an edge if and only if $j \equiv i + 1 \pmod{m}$.
For each $i \in \ZZ_m$, let
\[
B_i :=
\Nout[K]{i-1}
= \Nout[G]{i-1} \cap V(K)
= \{v \in V(K) \mid (i-1, v) \in E(K)\},
\]
i.e., $B_i$ is the set of all outneighbours of $i-1$ (addition modulo $m$) belonging to the strongly connected component $K$.
Note that $i \in B_i$ by definition.
We show that for all $i \in \ZZ_m$ and for all $v \in B_i$, we have
$\Nout[K]{v} = B_{i+1}$.
Let $i \in \ZZ_m$ and $v \in B_i$.
Considering
the closed walk
$i-1 \rightarrow i \rightarrow \dots \rightarrow m-1 \rightarrow 0 \rightarrow \dots \rightarrow i-1$ of length $m$
and the edge $i-1 \rightarrow v$,
Lemma~\ref{lem:m-cycle-out}
gives the edge $v \rightarrow i+1$
(note that by strong connectivity every vertex of $K$, in particular $v$, belongs to a cycle).
Now let $i \in \ZZ_m$, $v \in B_i$ and $w \in B_{i+1}$.
Considering the closed walk
$i-1 \rightarrow i \rightarrow w \rightarrow i+2 \rightarrow \dots \rightarrow m-1 \rightarrow 0 \rightarrow \dots \rightarrow i-1$ of length $m$
and the edge $i-1 \rightarrow v$,
Lemma~\ref{lem:m-cycle-out}
gives the edge $v \rightarrow w$.
We have shown thus far that for all $i \in \ZZ_m$,
$B_{i+1} \subseteq \Nout[K]{v}$ for all $v \in B_i$.

Now let $i \in \ZZ_m$, $v \in B_i$, and let $w$ be a vertex of $K$ with $v \rightarrow w$.
We have shown above that $v \rightarrow i+1$ is an edge.
Considering the closed walk
$v \rightarrow i+1 \rightarrow \dots \rightarrow m-1 \rightarrow 0 \rightarrow 1 \rightarrow \dots \rightarrow i-1 \rightarrow v$ of length $m$
and the edge $v \rightarrow w$,
Lemma~\ref{lem:m-cycle-out}
gives the edge $w \rightarrow i+2$.
Considering the closed walk
$i-1 \rightarrow v \rightarrow w \rightarrow i+2 \rightarrow \dots \rightarrow m-1 \rightarrow 0 \rightarrow 1 \rightarrow \dots \rightarrow i-1$ of length $m$
and the edge $i-1 \rightarrow i$,
Lemma~\ref{lem:m-cycle-out}
gives the edge $i \rightarrow w$.
Thus $w \in B_{i+1}$.
This shows that for each vertex $v$ of $B_i$, $\Nout[K]{v} \subseteq B_{i+1}$.

It remains to show that the sets $B_0, B_1, \dots, B_{m-1}$ constitute a partition of $V(K)$.
Let us show first that these sets are pairwise disjoint.
Suppose, to the contrary, that $B_i \cap B_j \neq \emptyset$ for some $i \neq j$, and let $v \in B_i \cap B_j$.
Then we have $i-1 \rightarrow v \rightarrow i+1$ and $j-1 \rightarrow v \rightarrow j+1$.
We will find a contradiction by showing that $K$ contains a cycle shorter than $C$.
If $j = i+1$,
then $K$ contains the loop $v \rightarrow v$, a cycle of length $1$.
Otherwise $v \rightarrow i+1 \rightarrow \dots \rightarrow j-1 \rightarrow v$ is a cycle shorter than $C$.

Suppose now, to the contrary, that $\bigcup_{i=0}^{m-1} B_i \neq V(K)$, and let $v \in V(K) \setminus \bigcup_{i=0}^{m-1} B_i$.
Since $K$ is strongly connected, there exists a path $0 = v_0 \rightarrow v_1 \rightarrow \dots \rightarrow v_p = v$ in $K$.
Then there exists an index $q \in \{0, \dots, p-1\}$ such that $v_q \in \bigcup_{i=0}^{m-1} B_i$ and $v_{q+1} \notin \bigcup_{i=0}^{m-1} B_i$, say $v_q \in B_j$.
But we have shown above that 
$\Nout[K]{v_q} = B_{j+1}$.
This gives the desired contradiction, and
we conclude that $K$ is an $m$\hyp{}whirl.
\end{proof}

\begin{lemma}
\label{lem:no-SCC-SCC}
Let $t, t' \in B_n$, $t \neq t'$, and
let $G$ be a digraph such that $\GA{G}$ satisfies the identity $t \approx t'$.
Then there is no path from a nontrivial strongly connected component of $G$ to another.
\end{lemma}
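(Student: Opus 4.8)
The plan is to argue by contradiction, and the engine of the argument will be Lemma~\ref{lem:m-cycle-out}, which lets us ``pull back'' an out-edge of a cycle vertex. Suppose $G$ has a path from a nontrivial strongly connected component $A$ to a \emph{different} nontrivial strongly connected component $B$. Let $a$ be the last vertex of this path lying in $A$, and let $w$ be its successor on the path; then $a \to w$ is an edge with $a \in A$ and $w \notin A$. Since the remainder of the path from $w$ still reaches $B$, and $B$ is nontrivial (hence contains a cycle), the vertex $w$ reaches a cycle, so $G$ contains arbitrarily long walks with initial vertex $w$ (cf.\ the Remark following Lemma~\ref{lem:loops}). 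The goal is to show that these hypotheses force an edge from $w$ back into $A$, contradicting $w \notin A$.

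Next I would produce a closed walk of the correct length through the exit vertex $a$. By Lemma~\ref{lem:M}, the component $A$ is an $m$-whirl for some divisor $m$ of $M := \Mtt$. In an $m$-whirl every vertex lies on a cycle of length $m$ (running once around the blocks), so $a$ lies on a closed walk $U \colon a = u_0 \to u_1 \to \dots \to u_{m-1} \to u_0$ of length $m$ contained entirely in $A$ (for $m = 1$ this is simply the loop at $a$). I would then invoke Lemma~\ref{lem:m-cycle-out} with this divisor $m$ of $M$, the closed walk $U$, the edge $u_0 = a \to w$, and the arbitrarily long walks from $w$. The lemma delivers the edge $w \to u_2$ when $m > 2$ and the edge $w \to u_0$ when $1 \le m \le 2$; in either case the endpoint ($u_2$ or $u_0$) lies on $U$, hence inside $A$.

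It remains to close the loop. We now have an edge $w \to z$ with $z \in A$. Together with the edge $a \to w$ and a walk from $z$ back to $a$ (which exists since $A$ is strongly connected), this yields a closed walk through $w$, so $w$ is strongly connected to $a$ and therefore $w \in A$ --- the desired contradiction.

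The step I expect to be the crux is securing the hypotheses of Lemma~\ref{lem:m-cycle-out} so that the edge it returns genuinely lands inside $A$: one must use a closed walk whose length divides $M$ \emph{and} which passes through the specific exit vertex $a$, which is exactly what the whirl description of $A$ from Lemma~\ref{lem:M} (ultimately Lemma~\ref{lem:m-cycle}) supplies; an arbitrary shortest cycle of $A$ need not pass through $a$. It is worth noting that the more obvious route via Lemma~\ref{lem:edge-to-L+1}---building a long walk that sits in $A$ at depth $L+1$ and in $B$ at depth $r$, so that the backward edge $v_r \to v_{L+1}$ would merge the two components---runs into trouble because $r$ is fixed while the $A$-to-$B$ transition need not fit inside the window from $L+1$ to $r$; channelling the argument through Lemma~\ref{lem:m-cycle-out} sidesteps this positioning difficulty altogether.
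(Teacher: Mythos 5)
Your proof is correct and follows essentially the same route as the paper: both argue by contradiction, use Lemma~\ref{lem:M} to realize the source component as an $m$\hyp{}whirl for some divisor $m$ of $\Mtt$ (so that the exit vertex lies on a closed walk of length $m$ through it), and then apply Lemma~\ref{lem:m-cycle-out} to the edge leaving the component, obtaining an edge from its endpoint back into the component and hence the contradiction. The only cosmetic difference is that the paper takes a shortest path between the two components, so that its initial vertex is automatically the unique vertex in the source component, whereas you take the last vertex of the path lying in that component; your crux remark about needing the closed walk to pass through the exit vertex matches exactly how the paper uses the whirl structure.
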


\begin{proof}
Suppose, to the contrary, that there are distinct nontrivial strongly connected components $K$ and $K'$ and a path $P$ from a vertex $v \in V(K)$ to a vertex $v' \in V(K')$.
Assume that $P$ is the shortest possible among all such paths.
Then $v$ is the only vertex of $P$ lying in $K$.
Let $w$ be the successor of $v$ along this path.

By Lemma~\ref{lem:M}, $K$ and $K'$ are $m$\hyp{} and $m'$\hyp{}whirls, respectively, for some divisors $m$ and $m'$ of $\Mtt$.
Hence $v$ belongs to an $m$\hyp{}cycle $C$ in $K$ and $v'$ belongs to an $m'$\hyp{}cycle $C'$ in $K'$.
Now Lemma~\ref{lem:m-cycle-out} provides an edge from $w$ to a vertex on $C$.
This means that $w$ belongs to the strongly connected component $K$, a contradiction.
\end{proof}

\begin{definition}
\label{def:PG}\label{def:EG}\label{def:OG}
Let $G = (V,E)$ be a digraph.
A walk in $G$ is \emph{pleasant,} if all its vertices belong to trivial strongly connected components.
Every pleasant walk is a path.
\end{definition}

\begin{lemma}
\label{lem:H}
Let $t, t' \in B_n$, $t \neq t'$, and
let $G$ be a digraph such that $\GA{G}$ satisfies the identity $t \approx t'$.
Then $G$ has no pleasant path of length $\Htt$.
\end{lemma}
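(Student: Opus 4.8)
The plan is to argue by contradiction, letting the heavy lifting be done entirely by Lemma~\ref{lem:edge-to-L+1}. Suppose $G$ has a pleasant path $W \colon v_0 \rightarrow v_1 \rightarrow \dots \rightarrow v_H$ of length $H := \Htt$. Write $L := \Ltt$ as usual. The only nontrivial ingredient I need is the observation that a pleasant path is, by Definition~\ref{def:PG}, a walk all of whose vertices lie in trivial strongly connected components; in particular every $v_i$ lies in a trivial strongly connected component.

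Next I would simply feed the walk $W$ into Lemma~\ref{lem:edge-to-L+1}. That lemma applies to any walk of length exactly $H = \Htt$ in a graph whose algebra satisfies $t \approx t'$, and its conclusion has two parts: it produces an integer $r$ with $L+1 \leq r \leq H$ such that $v_r \rightarrow v_{L+1}$ is an edge, and—crucially for us—it asserts that $v_{L+1}$ belongs to a \emph{nontrivial} strongly connected component. (Concretely, the edges $v_{L+1} \rightarrow v_{L+2} \rightarrow \dots \rightarrow v_r$ of $W$ together with the new edge $v_r \rightarrow v_{L+1}$ form a closed walk through $v_{L+1}$, which is why $v_{L+1}$ sits on a cycle.)

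This immediately collides with the previous paragraph: $v_{L+1}$ is a vertex of the pleasant path $W$, hence lies in a trivial strongly connected component, yet Lemma~\ref{lem:edge-to-L+1} places it in a nontrivial one. This contradiction completes the argument, so no pleasant path of length $\Htt$ can exist. The proof is therefore essentially a one-line invocation of Lemma~\ref{lem:edge-to-L+1}, and I do not anticipate any real obstacle; the only point requiring a moment of care is to confirm that the hypothesis of Lemma~\ref{lem:edge-to-L+1}—a walk of length precisely $\Htt$—is met by the pleasant path, which it is by assumption, and that the index $L+1$ falls in the admissible range $L+1 \leq r \leq H$ so that $v_{L+1}$ is genuinely a vertex of $W$.
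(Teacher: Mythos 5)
Your proof is correct and matches the paper's argument exactly: both suppose a pleasant path of length $\Htt$ exists, invoke Lemma~\ref{lem:edge-to-L+1} to obtain the edge $v_r \rightarrow v_{L+1}$ forcing $v_{L+1}$ into a nontrivial strongly connected component, and derive the contradiction with pleasantness. Your side remark that $L+1 \leq H$ (so $v_{L+1}$ really lies on the path, guaranteed since $\Ltt < \Htt$ by Definition~\ref{def:H}) is a fine extra check.
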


\begin{proof}
Denote $H := \Htt$, $L := \Ltt$.
Suppose, to the contrary, that there exists a pleasant path $v_0 \rightarrow v_1 \rightarrow \dots \rightarrow v_H$ in $G$.
By Lemma~\ref{lem:edge-to-L+1}, there is an index $r$ with $L+1 \leq r \leq H$ such that $(v_r, v_{L+1})$ is an edge of $G$.
Consequently, $v_{L+1}$ belongs to a nontrivial strongly connected component, a contradiction.
\end{proof}

The necessary conditions established in the previous lemmas together are sufficient for
the satisfaction of \emph{some} nontrivial bracketing identity.
We prove this in the following theorem, which provides a complete characterization of
(not) antiassociative digraphs. 
Note that this does not constitute a
necessary and sufficient condition for the satisfaction of a \emph{given} nontrivial bracketing identity. Finding such a condition will
be a topic of a forthcoming paper.

\begin{theorem}
\label{thm:antiass}
Let $G$ be a digraph.
Then $\GA{G}$ is not antiassociative if and only if the following conditions hold.
\begin{enumerate}[label={\upshape{(\roman*)}}]
\item\label{prop:antiass:whirl} Every nontrivial strongly connected component of $G$ is a whirl.
\item\label{prop:antiass:path} There is no path from a nontrivial strongly connected component of $G$ to another.
\item\label{prop:antiass:H} There is a finite upper bound on the length of the pleasant paths in $G$.
\item\label{prop:antiass:M} There is a finite upper bound on the numbers $m$ such that $G$ contains an $m$\hyp{}whirl.
\end{enumerate}
\end{theorem}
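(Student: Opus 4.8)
For necessity, suppose $\GA{G}$ is not antiassociative, so that it satisfies some nontrivial bracketing identity $t \approx t'$ with $t \neq t'$, $t,t' \in B_n$, and write $M := \Mtt$. Condition \ref{prop:antiass:whirl} is then immediate from Lemma~\ref{lem:M} (every nontrivial strongly connected component is an $m$\hyp{}whirl with $m \divides M$, hence a whirl), condition \ref{prop:antiass:path} is exactly Lemma~\ref{lem:no-SCC-SCC}, and condition \ref{prop:antiass:H} follows from Lemma~\ref{lem:H}, which forbids pleasant paths of length $\Htt$, so $\Htt-1$ is an upper bound. For condition \ref{prop:antiass:M} I would note that any whirl occurring in $G$ is strongly connected, hence lies inside a single nontrivial strongly connected component $K$; by Lemma~\ref{lem:M}, $K$ is an $m'$\hyp{}whirl with $m' \divides M$, and a short computation comparing block indices via cycle lengths shows that an induced sub\hyp{}whirl of an $m'$\hyp{}whirl is again an $m'$\hyp{}whirl. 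Thus every whirl in $G$ has size dividing $M$, and $M$ is the required bound.

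For sufficiency, assume \ref{prop:antiass:whirl}--\ref{prop:antiass:M}. Let $P$ be a finite upper bound on the lengths of pleasant paths (condition \ref{prop:antiass:H}) and let $M$ be the least common multiple of all $m$ for which $G$ contains an $m$\hyp{}whirl (finite by condition \ref{prop:antiass:M}; put $M:=1$ if there are none). I would then construct two distinct DFS trees $T$ and $T'$ of a sufficiently large size $n$ as follows. A common spine $x_1 \to \dots \to x_j$ descends to a vertex $a_0$ of depth $d \geq P+1$; below $a_0$ we place a path $a_0 \to a_1 \to \dots \to a_M$, and $a_M$ receives two children $c$ and $b$, each carrying a pendant path of length exceeding $P$. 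The tree $T'$ is obtained from $T$ by reparenting $b$ (with its pendant path) from $a_M$ up to $a_0$. Since only $b$ and its descendants move, and they move by exactly $M$ levels, the two depth sequences stay congruent modulo $M$; the vertex intervals remain consecutive, so both are genuine DFS trees, and $T \neq T'$. The unique new edge of $T'$ is $a_0 \to b$, and the unique new edge of $T$ is $a_M \to b$.

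To verify that $\GA{G}$ satisfies $t \approx t'$ I would apply Proposition~\ref{prop:phi}. The key auxiliary fact is that any vertex $v$ with $d_T(v) > P$ and $h(T_v) > P$ is sent by a homomorphism $\varphi$ into a nontrivial strongly connected component: a root\hyp{}to\hyp{}leaf walk through a deep leaf of $T_v$ has length $> P$, hence is not pleasant (pleasant walks are paths, of length $\leq P$), and by conditions \ref{prop:antiass:path} and \ref{prop:antiass:H} its nonpleasant part is a single whirl $K$ occupying a contiguous middle segment that necessarily contains $\varphi(v)$. Because $a_0,\dots,a_M,b$ all lie on a common root\hyp{}to\hyp{}leaf path (or share the deep prefix through $a_0$), their images lie in one and the same whirl $K$, say an $m$\hyp{}whirl with $m \divides M$. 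Tracking block indices along edges, $\varphi(a_0)\in B_{c_0}$ forces $\varphi(a_M)\in B_{c_0+M}=B_{c_0}$ and $\varphi(b)\in B_{c_0+M+1}=B_{c_0+1}$ (here $m \divides M$ is used), so $\varphi(a_0)\to\varphi(b)$ is an edge and $\varphi$ respects the new edge of $T'$. The symmetric computation in $T'$, where both $a_M$ and $b$ still carry pendant paths of length $> P$ and hence land in the common whirl, shows every homomorphism of $T'$ respects the edge $a_M \to b$. If $G$ contains no whirl, then every walk is pleasant and therefore bounded, tall trees admit no homomorphism into $G$, and the identity holds vacuously.

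The main obstacle is exactly this sufficiency construction: one must arrange the pair $(T,T')$ so that \emph{simultaneously} the depths remain congruent modulo $M$ and both endpoints of each reparented edge are deep with tall subtrees in \emph{both} trees. It is precisely the tall\hyp{}subtree condition on both endpoints that pins all the relevant images into a single whirl, and it is the $M$\hyp{}level reparenting that makes the modular bookkeeping inside that whirl close; getting a single gadget to satisfy all of these constraints at once (rather than only in the forward direction) is the delicate point.
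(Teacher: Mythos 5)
Your proposal is correct and takes essentially the same approach as the paper: necessity via Lemmas~\ref{lem:M}, \ref{lem:no-SCC-SCC} and \ref{lem:H} (the paper dispatches condition~\ref{prop:antiass:M} more directly, noting that every nontrivial strongly connected component is an $m$\hyp{}whirl with $m \divides \Mtt$, so your induced\hyp{}sub\hyp{}whirl digression is a harmless optional refinement), and sufficiency via the same gadget of two DFS trees that differ only in one branch, padded on all sides by paths of length exceeding $P$ and reparented by exactly $M$ levels, so that any homomorphism traps all relevant images in a single $m$\hyp{}whirl with $m \divides M$ and the modular block bookkeeping closes. Your pair $(T,T')$ is just a mirror image of the paper's construction, which hangs the branch at depth $P+2$ in $T$ and at depth $P+M+2$ in $T'$ along a spine of length $2P+M+3$.
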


\begin{proof}
Assume that $\GA{G}$ satisfies a nontrivial bracketing identity $t \approx t'$.
By
Lemma~\ref{lem:edge-to-L+1},
the pleasant paths in $G$ have length less than $\Htt$.
By Lemma~\ref{lem:M}, every nontrivial strongly connected component of $G$ is an $m$\hyp{}whirl for some divisor $m$ of $\Mtt$; such numbers $m$ are clearly bounded above by $\Mtt$.
By Lemma~\ref{lem:no-SCC-SCC}, there is no path from a nontrivial strongly connected component of $G$ to another.

Assume now that conditions \ref{prop:antiass:whirl}--\ref{prop:antiass:M} hold.
We will construct a nontrivial bracketing identity $t \approx t'$ that is satisfied by $\GA{G}$.
We will define the terms $t$ and $t'$ in terms of the corresponding DFS trees $T := G(t)$ and $T' := G(t')$.
Let $P$ be an upper bound on the lengths of pleasant paths in $G$, as provided by condition \ref{prop:antiass:H},
and let
\[
M := \lcm \{ m \in \IN_{+} \mid \text{$G$ contains an $m$\hyp{}whirl} \},
\]
which is a finite natural number by condition \ref{prop:antiass:M},
with the convention that $\lcm \emptyset = 1$.
Let $n := 3 P + M + 6$, and let $T$ consist of the paths $x_1 \rightarrow \dots \rightarrow x_{2 P + M + 4}$ and $x_{2 P + M + 5} \rightarrow \dots \rightarrow x_{3 P + M + 6}$ and of the edge $x_{P + 2} \rightarrow x_{2 P + M + 5}$.
The tree $T'$ is constructed in a similar way, but we replace the edge $x_{P + 2} \rightarrow x_{2 P + M + 5}$ by $x_{P + M + 2} \rightarrow x_{2 P + M + 5}$.
If $\varphi \colon X_n \to V(G)$ is a homomorphism of $T$ into $G$, then there is an $i \in [1, P + 2]$ such that $\varphi(x_i)$ belongs to a nontrivial strongly connected component, by the definition of $P$.
Similarly, $\varphi(x_j)$ and $\varphi(x_k)$ belong to a nontrivial strongly connected component for some $j \in [P + M + 3, 2 P + M + 4]$ and for some $k \in [2 P + M + 5, 3 P + M + 6]$.
Condition \ref{prop:antiass:path} implies that $\varphi(x_i)$, $\varphi(x_{i+1})$, \dots, $\varphi(x_j)$ are in the same nontrivial strongly connected component $K$, and this includes the vertices $\varphi(x_{P+2})$, $\varphi(x_{P+3})$, \dots, $\varphi(x_{P + M + 3})$.
Similarly, also the vertices $\varphi(x_{2 P + M + 5})$, \dots, $\varphi(x_k)$ belong to $K$.
By the definition of $M$, the component $K$ is an $m$\hyp{}whirl for some divisor $m$ of $M$.
Therefore the vertices $\varphi(x_{P+2})$ and $\varphi(x_{P+M+2})$ belong to the same block $B$ of $K$,
and the vertices $\varphi(x_{P+3})$, $\varphi(x_{P+M+3})$ and $\varphi(x_{2 P + M + 5})$ belong to the successor block $B'$ of $B$.
This implies that $\varphi(x_{P + M + 2}) \rightarrow \varphi(x_{2 P + M + 5})$ is an edge, which proves that $\varphi$ is also a homomorphism of $T'$ into $G$.
An analogous argument shows that if $\varphi$ is a homomorphism of $T'$ into $G$, then $\varphi$ is also a homomorphism of $T$ into $G$.
Now if we let $t$ and $t'$ be the bracketings corresponding to $T$ and $T'$, respectively, then $\GA{G}$ satisfies $t \approx t'$ by Proposition~\ref{prop:phi}.
\end{proof}

\section{Some examples}
\label{sect examples}

In this section we determine the associative spectrum of a few special directed graphs
that are not covered by the results of the previous sections (i.e., they are 
neither associative nor antiassociative), such as directed paths and cycles, 
all graphs on two vertices, etc. For the spectra of directed paths,
we shall need the number of DFS trees of bounded height, so first we recall some
known facts about these numbers.

Let us denote the number of DFS trees of size $n$ of height at most $h$ by $\thn{h}{n}$.
The generating function $\sum_{n=0}^{\infty} \thn{h}{n} \cdot x^n$ is a rational function
(see \cite{BKR}), hence the sequence $\{ \thn{h}{n} \}_{n=0}^{\infty}$ satisfies a 
linear recurrence relation:
\begin{align*}
\thn{h}{n+1}
&=\binom{h}{1}\thn{h}{n}-\binom{h-1}{2}\thn{h}{n-1}+\binom{h-2}{3}\thn{h}{n-2}- \dots \\
&=\sum_{k=0}^{\left\lfloor\frac{h-1}{2}\right\rfloor} (-1)^k \binom{h-k}{k+1}\thn{h}{n-k}.
\end{align*}
We list these recurrence relations for $h=2,3,4,5$ in Table~\ref{table:thn} together with
explicit formulas for $\thn{h}{n}$ and the corresponding OEIS entries
(for $h=5$ the characteristic polynomial
of the linear recurrence is $x^3-5x^2+6x-1$, and its roots are not ``nice",
so we do not give an explicit formula for this case). Note that we have every second Fibonacci number for
$h=3$ (we use the indexing $F_1=F_2=1$).
For more information on the numbers $\thn{h}{n}$, see \cite{BKR}
(note that in \cite{BKR} the height is defined as the number of vertices of the longest path 
starting at the root, whereas in this paper the number of edges is counted), 
and see also the OEIS entry \href{https://oeis.org/A080934}{A080934}

\begin{table}
\begin{center}
\begin{tabular}{lll}
\toprule
recurrence & explicit formula & OEIS entry \\
\midrule
$\thn{2}{n+1}=2\thn{2}{n}$   &   $\thn{2}{n}=2^{n-2}$ & \href{https://oeis.org/A000079}{A000079} \\
$\thn{3}{n+1}=3\thn{3}{n}-\thn{3}{n-1}$   &   $\thn{3}{n}=F_{2n-3}$ & \href{https://oeis.org/A001519}{A001519} \\
$\thn{4}{n+1}=4\thn{4}{n}-3\thn{4}{n-1}$   &   $\thn{4}{n}=\frac{3^{n-2}+1}{2}$ & \href{https://oeis.org/A007051}{A007051} \\
$\thn{5}{n+1}=5\thn{5}{n}-6\thn{5}{n-1}+\thn{5}{n-2}$ & & \href{https://oeis.org/A080937}{A080937} \\
\bottomrule
\end{tabular}
\end{center}

\medskip
\caption{Number of DFS trees with bounded height.}
\label{table:thn}
\end{table}

\begin{proposition}
\label{prop:spectrum of path}
Let $G$ be a directed path of length $\ell$: 
$v_0 \rightarrow v_1 \rightarrow \cdots \rightarrow v_\ell$. 
The associative spectrum of the corresponding graph algebra is
\[
s_n(\GA{G})=
\begin{cases}
\thn{\ell}{n}, & \text{if $n \leq \ell+1$,} \\
\thn{\ell}{n}+1, & \text{if $n \geq \ell+2$.}
\end{cases}
\]
\end{proposition}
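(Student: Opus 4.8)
The plan is to use Proposition~\ref{prop:phi} to reduce the computation of $s_n(\GA{G})$ to counting equivalence classes of DFS trees of size $n$ under the relation $T \sim T'$ that holds exactly when $\GA{G}$ satisfies the corresponding bracketing identity $t \approx t'$ (with $T = G(t)$, $T' = G(t')$). The whole argument rests on an explicit description of the homomorphisms from a DFS tree into the directed path $G$.

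First I would determine all homomorphisms $\varphi \colon T \to G$ for an arbitrary DFS tree $T$ of size $n$. Writing $V(G) = \{v_0, \dots, v_\ell\}$, the only edges of $G$ are $v_i \to v_{i+1}$, so any homomorphism must increase the $G$-coordinate by exactly $1$ along every edge of $T$; following the unique path from the root $x_1$ to $x_i$ then forces $\varphi(x_i) = v_{j + d_T(x_i)}$, where $v_j = \varphi(x_1)$. Such a map is a genuine homomorphism into $G$ precisely when every index stays in $\{0, \dots, \ell\}$, that is, when $0 \le j$ and $j + h(T) \le \ell$. Hence the homomorphisms $T \to G$ are exactly the $\ell - h(T) + 1$ collapsing maps of $T$ onto the subpaths $v_j \to \dots \to v_{j + h(T)}$ obtained by choosing $j \in \{0, \dots, \ell - h(T)\}$ if $h(T) \le \ell$, and there are none if $h(T) > \ell$.

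The crucial observation is that distinct DFS trees have \emph{disjoint} sets of homomorphisms into $G$. Indeed, if one map $\varphi$ is simultaneously a homomorphism of $T$ and of $T'$, then the formula above gives $d_T(x_i) = d_{T'}(x_i)$ for every $i$ (both equal the index of $\varphi(x_i)$ minus the index of $\varphi(x_1)$), whence $T = T'$ by Proposition~\ref{prop:unique-depth-seq}. Combining this disjointness with Proposition~\ref{prop:phi}, I obtain for $T \neq T'$ that $T \sim T'$ if and only if the two homomorphism sets coincide, which (being disjoint) happens exactly when both are empty, i.e.\ exactly when $h(T) > \ell$ and $h(T') > \ell$.

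This pins down the class structure completely: every DFS tree of height at most $\ell$ forms its own singleton class, while all DFS trees of height greater than $\ell$ (if any exist) merge into a single class. Since $\thn{\ell}{n}$ counts the DFS trees of size $n$ of height at most $\ell$, and a DFS tree of size $n$ of height $> \ell$ exists if and only if the maximal possible height $n-1$ exceeds $\ell$, i.e.\ $n \ge \ell + 2$, the number of classes is $\thn{\ell}{n}$ for $n \le \ell + 1$ and $\thn{\ell}{n} + 1$ for $n \ge \ell + 2$, as claimed. I do not expect a real obstacle; the one step deserving care is the disjointness of the homomorphism sets, which is exactly what makes the classes so transparent and which relies on the uniqueness of DFS trees given their depth sequences (Proposition~\ref{prop:unique-depth-seq}).
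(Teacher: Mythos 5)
Your proposal is correct and takes essentially the same approach as the paper: trees of height greater than $\ell$ admit no homomorphisms into $G$ and hence merge into one class by Proposition~\ref{prop:phi} (a class that is nonempty precisely when $n \geq \ell+2$), while for $h(T) \leq \ell$ a homomorphism forces the depth sequence, so Proposition~\ref{prop:unique-depth-seq} yields singleton classes counted by $\thn{\ell}{n}$. Your classification of all $\ell - h(T) + 1$ homomorphisms and the disjointness-of-hom-sets framing is a mild repackaging of the paper's argument, which gets by with the single collapsing map $\varphi(x_i) = v_{d_T(x_i)}$.
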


\begin{proof}
If $T$ is a DFS tree with $h(T)>\ell$, then there is no homomorphism from $T$ to $G$, 
hence all bracketings corresponding to DFS trees of height at least $\ell+1$ 
belong to the same equivalence class of $\sigma_n(\GA{G})$ by Proposition~\ref{prop:phi}.
(Note that such trees exist only if $n \geq \ell+2$.)
If $h(T)\leq\ell$, then there exist homomorphisms from $T$ to $G$, for instance the
collapsing map $\varphi$ of $T$ on $G$ defined by $\varphi(x_i)=v_{d_T(x_i)}$.
Now if $T'$ is another DFS tree of size $n$, then $\varphi$ is a homomorphism of $T'$ to $G$
if and only if $d_{T'}(x_i)=d_T(x_i)$ for all $i\in\nset{n}$, and this implies $T'=T$ by
Proposition~\ref{prop:unique-depth-seq}. This together with Proposition~\ref{prop:phi}
shows that each bracketing whose DFS tree has height at most $\ell$ forms a singleton class
in $\sigma_n(\GA{G})$. 
There are $\thn{\ell}{n}$ such classes, and if $n \geq \ell+2$, then we
also have the class corresponding to trees of height at least $\ell+1$.
\end{proof}

Next we examine directed paths with some loops. By Theorem~\ref{thm:antiass}
(or just by Lemma~\ref{lem:no-SCC-SCC}), if we have at least two loops on a path,
then the graph is antiassociative, so it suffices to consider the case of only one loop.
We determine the spectrum of directed paths with a loop on the last vertex; the other
cases constitute topic for further research. (However, see Proposition~\ref{prop:spectrum of @-->O}
for the path of length $1$ with a loop on the first vertex.)

\begin{lemma}
\label{lem:level equivalence}
Let $\sim$ be the equivalence relation on $B_n$ that relates 
$t$ and $t'$ if and only if $T:=G(t)$ and $T':=G(t')$ coincide up to 
level $h$, i.e., $\Ltt\geq h$.
Then $\card{B_n/{\sim}}=\thn{h+1}{n}$.
\end{lemma}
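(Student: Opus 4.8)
The plan is to push everything through the depth-sequence encoding of bracketings and to reduce the count to a count of bounded zag sequences. For a bracketing $t \in B_n$ write $\mathbf{d}(t) := (d_{G(t)}(x_1), \dots, d_{G(t)}(x_n))$ for its depth sequence. By the bijection between bracketings of size $n$ and DFS trees of size $n$, together with Propositions~\ref{prop:unique-depth-seq} and~\ref{prop:depth-zag-seq}, the assignment $t \mapsto \mathbf{d}(t)$ is a bijection from $B_n$ onto the set $\mathcal{Z}_n$ of zag sequences of length $n$. The first step is to recast $\sim$ as a capping operation on depth sequences. Define $\phi_h \colon (d_1,\dots,d_n) \mapsto (\min(d_1,h+1),\dots,\min(d_n,h+1))$. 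I claim that $t \sim t'$ if and only if $\phi_h(\mathbf{d}(t)) = \phi_h(\mathbf{d}(t'))$. Indeed, unwinding Definition~\ref{def:L}, the condition $\Ltt \geq h$ asserts that for every $i$, whenever $\min(d_{G(t)}(x_i), d_{G(t')}(x_i)) \leq h$ one has $d_{G(t)}(x_i) = d_{G(t')}(x_i)$; comparing each $d_{G(t)}(x_i)$ and $d_{G(t')}(x_i)$ against the threshold $h$ shows this is exactly the requirement $\min(d_{G(t)}(x_i), h+1) = \min(d_{G(t')}(x_i), h+1)$ for all $i$. This characterization also makes it manifest that $\sim$ is an equivalence relation, being the kernel of the map $t \mapsto \phi_h(\mathbf{d}(t))$, and it identifies $\card{B_n / {\sim}}$ with $\card{\phi_h(\mathcal{Z}_n)}$.

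The second step is to show that $\phi_h(\mathcal{Z}_n)$ is precisely the set of zag sequences of length $n$ all of whose entries are at most $h+1$. For the inclusion $\subseteq$, I would verify that $\phi_h$ maps $\mathcal{Z}_n$ into $\mathcal{Z}_n$: the entries $0,1$ in positions $1,2$ are unchanged (as $h \geq 0$), every capped entry stays at least $1$, and the defining inequality $\min(d_{i+1},h+1) \leq \min(d_i,h+1)+1$ follows by splitting into the cases $d_i \geq h+1$ (the right side is $h+2$) and $d_i \leq h$ (the right side is $d_i+1 \geq d_{i+1} \geq \min(d_{i+1},h+1)$). Since every element of the image is bounded by $h+1$ by construction, this gives the inclusion. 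The reverse inclusion is immediate: $\phi_h$ fixes every zag sequence already bounded by $h+1$, so all such sequences lie in the image. Because the height of a DFS tree equals the maximum of its depths, Proposition~\ref{prop:depth-zag-seq} puts the zag sequences of length $n$ bounded by $h+1$ in bijection with the DFS trees of size $n$ of height at most $h+1$, of which there are $\thn{h+1}{n}$. Combining the two steps yields $\card{B_n / {\sim}} = \card{\phi_h(\mathcal{Z}_n)} = \thn{h+1}{n}$, as desired.

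I expect the only delicate point to be the bookkeeping of the off-by-one between the level $h$ and the cap value $h+1$, and in particular checking that $\phi_h$ respects the zag condition exactly at the threshold, i.e.\ in the boundary case $d_i \leq h$ with $d_{i+1} = h+1$ (which is handled by the zag inequality $d_{i+1} \leq d_i + 1$). Everything else is a routine translation through the depth-sequence bijection.
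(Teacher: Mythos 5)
Your proof is correct and follows essentially the same route as the paper: your capping map $\phi_h$ is exactly the paper's map $\beta$ on zag sequences, with the same kernel argument identifying $\card{B_n/{\sim}}$ with the number of zag sequences bounded by $h+1$, i.e.\ $\thn{h+1}{n}$. The only difference is that you spell out the verification that $\Ltt \geq h$ is equivalent to equality of the capped depth sequences, a step the paper states without detail.
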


\begin{proof}
The equivalence relation $\sim$ on $B_n$ induces naturally an equivalence relation on
the set of zag sequences of length $n$, and we will use the same symbol $\sim$ for this relation.
For any zag sequence $\mathbf{d}=(d_1,\dots,d_n)$, let $\beta(\mathbf{d})=(d'_1,\dots,d'_n)$
be the sequence obtained from $\mathbf{d}$ by replacing each element 
greater than $h$ by $h+1$, i.e., $d'_i=\min(d_i,h+1)$ for $i=1,\dots,n$.%
\footnote{The map $\beta$ can be explained in terms of DFS trees as follows: if $T$ is the DFS tree corresponding to the zag sequence $\mathbf{d}$, then $\beta(\mathbf{d})$ corresponds to the DFS tree obtained from $T$ by turning, for each vertex $v$ at depth $h$, all descendants of $v$ into children of $v$.}
It is straightforward to verify that $\beta(\mathbf{d})$ is also a zag sequence,
and every zag sequence bounded above by $h+1$ is in the image of $\beta$ 
(indeed, if $\mathbf{d}$ is bounded by $h+1$, then $\beta(\mathbf{d})=\mathbf{d}$).
Morover, for all zag sequences $\mathbf{d_1},\mathbf{d_2}$ of size $n$, we have 
$\beta(\mathbf{d_1})=\beta(\mathbf{d_2})$ if and only if $\mathbf{d_1}\sim\mathbf{d_2}$.
Thus $\beta$ is a surjection from the set of all zag sequences of size $n$ to the
set of all zag sequences of size $n$ bounded by $h+1$, and the kernel of $\beta$ is the
equivalence relation $\sim$. This implies that the number of equivalence classes of $\sim$
equals the cardinality of the image of $\beta$, which is clearly $\thn{h+1}{n}$.
\end{proof}

\begin{proposition}
\label{prop:spectrum of path with loop}
Let $G$ be a directed path of length $\ell$ with a loop on the last vertex:
$v_0 \rightarrow v_1 \rightarrow \cdots \rightarrow v_\ell
\ \loopedge$.
The associative spectrum of the corresponding graph algebra is $s_n(\GA{G})=\thn{\ell}{n}$.
\end{proposition}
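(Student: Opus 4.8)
The plan is to determine explicitly all homomorphisms of a DFS tree into $G$ and then feed this into Proposition~\ref{prop:phi}. First I would show that, for any DFS tree $T = G(t)$ of size $n$, a map $\varphi \colon X_n \to V(G)$ is a homomorphism of $T$ into $G$ if and only if it is the collapsing map
\[
\varphi_a \colon x_i \mapsto v_{\min(a + d_T(x_i),\, \ell)}
\]
for some $a \in \{0, \dots, \ell\}$, where $v_a$ is the image of the root $x_1$. Each such $\varphi_a$ collapses $T$ onto the walk $v_a \rightarrow v_{a+1} \rightarrow \dots \rightarrow v_\ell \rightarrow v_\ell \rightarrow \dots$ and is therefore automatically a homomorphism; the content is the converse. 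It holds because the only outneighbour of $v_\ell$ is $v_\ell$ itself (via the loop) and the only outneighbour of $v_k$ for $k < \ell$ is $v_{k+1}$. Hence, once $\varphi(x_1) = v_a$ is fixed, a straightforward induction on depth forces $\varphi(x_i) = v_{\min(a + d_T(x_i),\, \ell)}$ for every vertex. In particular, $T$ has exactly $\ell + 1$ homomorphisms into $G$, pairwise distinct since they take distinct values $v_a$ at the root.

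By Proposition~\ref{prop:phi}, $\GA{G}$ satisfies $t \approx t'$ (with $T = G(t)$, $T' = G(t')$) if and only if $T$ and $T'$ have the same set of homomorphisms into $G$. Since every homomorphism of either tree is determined by, and determines, the index of its root image, the map $\varphi_a$ of $T$ can coincide only with the map $\varphi_a$ of $T'$ carrying the same index $a$. Thus the two homomorphism sets agree if and only if, for every $a \in \{0, \dots, \ell\}$ and every $i$,
\[
\min(a + d_T(x_i),\, \ell) = \min(a + d_{T'}(x_i),\, \ell).
\]

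Next I would read off when this family of equalities holds. The binding constraint is $a = 0$: the equality $\min(d_T(x_i), \ell) = \min(d_{T'}(x_i), \ell)$ says that for each $i$ either $d_T(x_i) = d_{T'}(x_i)$, or both depths are at least $\ell$, which is precisely $\Ltt \geq \ell - 1$, i.e.\ that $T$ and $T'$ coincide up to level $\ell - 1$. Conversely, if $\Ltt \geq \ell - 1$, then for every $i$ the two depths are either equal or both $\geq \ell$; in the latter case $a + d_T(x_i) \geq \ell$ and $a + d_{T'}(x_i) \geq \ell$ for all $a \geq 0$, so both sides collapse to $\ell$, and the displayed equality holds for every $a$. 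Therefore $\GA{G}$ satisfies $t \approx t'$ if and only if $\Ltt \geq \ell - 1$, so $\sigma_n(\GA{G})$ is exactly the relation ``coincide up to level $\ell - 1$''.

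Finally, I would invoke Lemma~\ref{lem:level equivalence} with $h = \ell - 1$, which counts the classes of precisely this equivalence relation, obtaining $s_n(\GA{G}) = \card{B_n / \sigma_n(\GA{G})} = \thn{\ell}{n}$ (since $h + 1 = \ell$). Here I take $\ell \geq 1$; the degenerate case $\ell = 0$ is the single looped vertex, a $1$-whirl, hence associative. I expect the only genuinely delicate step to be the $\min$-analysis of the third paragraph: specifically, recognizing that it suffices to impose the equality at $a = 0$ and that this single instance is equivalent to the clean level-$(\ell-1)$ coincidence condition. Once that is established, the count is immediate from the earlier lemma.
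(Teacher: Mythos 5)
Your proposal is correct and follows essentially the same route as the paper's proof: the same characterization of the homomorphisms of a DFS tree into $G$ as the $\ell+1$ collapsing maps determined by the image of the root, the same reduction via Proposition~\ref{prop:phi} to the condition $\Ltt \geq \ell-1$, and the same appeal to Lemma~\ref{lem:level equivalence} with $h = \ell-1$ to get $\thn{\ell}{n}$. Your explicit $\min$-analysis (observing that the case $a=0$ is the binding one) simply fills in a step the paper leaves implicit, and your restriction to $\ell \geq 1$ is an appropriate reading of the statement.
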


\begin{proof}
Homomorphisms of a DFS tree $T$ into $G$
are uniquely determined by 
the image of $x_1$: a map $\varphi\colon X_n \to V(G)$ with 
$\varphi(x_1)=v_k$ is a homomorphism from $T$ to $G$ if and only if $\varphi(x_i)=v_{d_T(x_i)+k}$ whenever $d_T(x_i)<\ell-k$
and $\varphi(x_i)=v_\ell$ whenever $d_T(x_i)\geq\ell-k$. 
This implies, by Proposition~\ref{prop:phi}, that $\GA{G}$ satisfies 
a bracketing identity $t \approx t'$ if and only if 
$\Ltt\geq\ell-1$. Therefore, Lemma~\ref{lem:level equivalence} gives 
$s_n(\GA{G})=\thn{\ell-1+1}{n}=\thn{\ell}{n}$.
\end{proof}

For the spectrum of the directed cycle $C_m$, we need to
count depth sequences modulo $m$ by Lemma~\ref{lem:m-cycle}
(or zag sequences modulo $m$, according to 
Proposition~\ref{prop:depth-zag-seq}).
The resulting numbers are called \emph{modular Catalan numbers} in 
\cite{HeinHuang-2017}, and they are denoted by $C_{m,n}$.
For us it will be most convenient to define these numbers simply as
$C_{m,n}:=s_{n+1}(\GA{C_m})$, and we refer the reader to  \cite{HeinHuang-2017}
for plenty of information on these numbers (tables of numerical values, 
references to OEIS entries, formulas and various combinatorial interpretations).
We give two combinatorial interpretations in the next proposition.
The second one is stated in \cite{HeinHuang-2017}, but the proof is left to the
reader there, so we include the proof here.

\begin{proposition}
\label{prop:spectrum of C_m}
The associative spectrum $s_n(\GA{C_m})=C_{m,n-1}$ counts the number of
zag sequences satisfying 
\begin{equation}
\label{eq:zag mod m}
d_{i+1}-d_i \in \{ 2-m,3-m,\dots,0,1 \} \text{ for all } i\in\{1,\dots,n-1\}. 
\end{equation}
Furthermore, $s_n(\GA{C_m})=C_{m,n-1}$ equals the number of 
Dyck paths of semilength $n-1$ that do not contain $D \cdots DU=D^mU$.
\end{proposition}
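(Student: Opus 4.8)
The first combinatorial interpretation is essentially a restatement of earlier results. By Lemma~\ref{lem:m-cycle}, the graph algebra $\GA{C_m}$ satisfies a bracketing identity $t \approx t'$ if and only if $m \divides \Mtt$, i.e., if and only if the depth sequences of $G(t)$ and $G(t')$ are congruent modulo $m$. By Proposition~\ref{prop:depth-zag-seq}, depth sequences of DFS trees of size $n$ are exactly the zag sequences of length $n$. Therefore $s_n(\GA{C_m}) = C_{m,n-1}$ counts the congruence classes modulo $m$ of zag sequences of length $n$. To get the claimed interpretation, I would choose a canonical representative in each class: given a zag sequence $(d_1,\dots,d_n)$, I reduce it to the unique sequence in its class satisfying \eqref{eq:zag mod m}. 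The plan is to check that each congruence class modulo $m$ of zag sequences contains exactly one sequence with all steps $d_{i+1}-d_i$ in $\{2-m,\dots,0,1\}$, by observing that the ordinary zag condition allows steps in $\{\dots,-2,-1,0,1\}$ (bounded above by $1$, unbounded below), and that reducing each $d_i$ modulo $m$ to lie in a suitable window produces exactly one representative with small steps; the starting conditions $d_1=0$, $d_2=1$ pin down the class representative. This is the routine direction.

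For the Dyck path interpretation, the natural route is through the bijection of Remark~\ref{rem:lattice paths} between DFS trees of size $n$ (equivalently, zag sequences of length $n$) and Dyck paths of semilength $n-1$. The plan is to translate the congruence-modulo-$m$ equivalence on zag sequences into an explicit equivalence on Dyck paths, and then to show that each equivalence class contains exactly one Dyck path avoiding the factor $D^m U$. Recall that under the bijection, the height of the path at a given horizontal position corresponds to the depth of a vertex in the DFS tree; each up-step $U$ increases depth by $1$ (descending into a child) and each maximal run of down-steps $D^k$ followed by a $U$ corresponds to backtracking $k$ levels before branching into the next child. Thus a factor $D^m U$ in the Dyck path corresponds precisely to a place where we drop by exactly $m$ levels and then go up again, which is the operation that changes the depth of the subsequent vertices by a multiple of $m$ while preserving the congruence class.

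The main obstacle — and the heart of the proof — is to verify that the rewriting rule ``replace a factor $D^m U$ by $U D^m$'' (or its inverse) is a confluent, terminating rewriting system whose equivalence classes coincide exactly with the congruence-mod-$m$ classes of the corresponding zag sequences, with the unique normal form in each class being the $D^m U$-avoiding path. First I would show that swapping an adjacent $D^m U$ to $U D^m$ changes the underlying zag sequence only within its congruence class modulo $m$: the local swap shifts the depths of an intermediate block of vertices by $m$, leaving all residues fixed. Conversely, I would argue that any two zag sequences congruent modulo $m$ are connected by such swaps, by inducting on the total ``excess'' $\sum_i (d_i - d_i^{\min})/m$, where $d_i^{\min}$ is the reduced representative from the first part; each application of the rule strictly decreases this nonnegative quantity, guaranteeing termination, and the $D^m U$-avoiding path is reached precisely when no further reduction is possible. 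Establishing that this normal form is unique (confluence) is the delicate point; I expect this to follow from the one-to-one correspondence already established in the first part, since the number of $D^m U$-avoiding Dyck paths then equals the number of congruence classes, forcing exactly one such path per class.
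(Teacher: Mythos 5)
Your plan for the first interpretation is stated in a way that does not quite work: ``reducing each $d_i$ modulo $m$ to lie in a suitable window'' (a value\hyp{}wise reduction) can destroy the zag property and create forbidden steps --- e.g.\ for $m=3$ the zag sequence $(0,1,2,3,4)$ already satisfies \eqref{eq:zag mod m}, while reducing its values into $\{0,1,2\}$ yields $(0,1,2,0,1)$ with a step $-2 \notin \{-1,0,1\}$. The correct move, as in the paper, is to reduce the \emph{differences}: set $d'_1 = 0$ and $d'_{i+1} = d'_i + (d_{i+1}-d_i)^\ast$, where $(\,\cdot\,)^\ast$ denotes the representative in $\{2-m,\dots,0,1\}$. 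Even then, the one substantive verification you dismiss as routine is that the resulting sequence is still a zag sequence, i.e.\ $d'_i \geq 1$ for $i \geq 2$; the paper gets this from $(d_{i+1}-d_i)^\ast \geq d_{i+1}-d_i$ and an induction showing $d'_i \geq d_i$. Note that this inequality says the canonical representative pointwise \emph{dominates} every member of its class, which reverses the sign of your termination measure: your ``excess'' $\sum_i (d_i - d_i^{\min})/m$ is nonpositive as defined, and your swaps $D^mU \to UD^m$ \emph{increase} depths, so the induction quantity is inconsistent as written (fixable, but wrong as stated).

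The more serious gap is in the Dyck path half. The confluence of your rewriting system --- which you yourself flag as the delicate point --- is never proved; you propose to extract it from the equality between the number of $D^mU$\hyp{}avoiding paths and the number of congruence classes, but that equality is precisely what is to be proven, and the only route you indicate to it is the direct correspondence between the forbidden factor and the depth condition. Once that correspondence is in hand, the entire rewriting apparatus is superfluous: the paper simply observes that under the bijection of Remark~\ref{rem:lattice paths}, the passage from the first visit of $x_i$ to that of $x_{i+1}$ is a factor $D^kU$ with $k = d_T(x_i) - d_T(x_{i+1}) + 1$, so the path avoids $D^mU$ if and only if every step satisfies $d_{i+1}-d_i \geq 2-m$, i.e.\ if and only if \eqref{eq:zag mod m} holds; combined with the first part (exactly one such sequence per congruence class, via Lemma~\ref{lem:m-cycle} and Proposition~\ref{prop:depth-zag-seq}), this finishes the proof with no normal\hyp{}form argument at all. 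Finally, be careful with your translation itself: a factor $D^mU$ corresponds to a drop of \emph{at least} $m$ levels (the run of down\hyp{}steps containing it may be longer), not ``exactly $m$ levels'' as you write; taken literally, your stated correspondence between forbidden factors and forbidden steps is false.
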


\begin{proof}
According to Proposition~\ref{prop:depth-zag-seq} and Lemma~\ref{lem:m-cycle},
the associative spectrum of \GA{C_m} counts the number of zag sequences modulo $m$.
We claim that each zag sequence of length $n$ is congruent modulo $m$ 
to exactly one zag sequence $(d_1,\dots,d_n)$ that satisfies  \eqref{eq:zag mod m}.

Since $\{ 2-m,3-m,\dots,0,1 \}$ is a complete system of residues modulo $m$,
it is clear that if two zag sequences satisfying \eqref{eq:zag mod m} are
congruent modulo $m$, then they are equal. (Note that since all zag sequences
start with $0$, the differences $d_{i+1}-d_i$ uniquely determine the zag sequence.)
To prove that every zag sequence is congruent to a zag sequence that
satisfies \eqref{eq:zag mod m}, let $(d_1,\dots,d_n)$ be an arbitrary 
zag sequence, and define the numbers $0=d'_1,\dots,d'_n$ recursively by 
$d'_{i+1}=d'_i+(d_{i+1}-d_i)^\ast$, where $(d_{i+1}-d_i)^\ast$ is
the unique element of the set $\{ 2-m,3-m,\dots,0,1 \}$ that is congruent to 
$d_{i+1}-d_i$ modulo $m$. 
Obviously, we have $d'_{i}\equiv d_i\pmod{m}$ and $d'_{i+1}-d'_i \leq 1$;
we only need to prove that $d'_i\geq 1$ for $i=1,\dots,n$.
Since $d_{i+1}-d_i \leq 1$ by the definition of a zag sequence,
we have $(d_{i+1}-d_i)^\ast \geq d_{i+1}-d_i$, and then an easy induction
argument proves that $d'_i \geq d_i$ for $i=1,\dots,n$. This shows that
$d'_1,\dots,d'_n$ is indeed a zag sequence, hence our claim is proved.

We have proved so far that $s_n(\GA{C_m})$ equals the number of zag sequences 
that satisfy \eqref{eq:zag mod m}.
Note the difference between \eqref{eq:zag} and \eqref{eq:zag mod m}:
an arbitrary zag sequence can have arbitrarily large decreases, while a
sequence satisfying \eqref{eq:zag mod m} can drop 
at most by $m-2$.\footnote{Good news for Sisyphus!}
To prove the statement about Dyck paths, let us rewrite  \eqref{eq:zag mod m}
in terms of the corresponding DFS tree $T$:
\begin{equation}
\label{eq:depth mod m}
d_T(x_{i+1}) \geq d_T(x_i) - (m-2) \text{ for all } i\in\{1,\dots,n-1\}. 
\end{equation}
If $x_{i+1}$ is a child of $x_i$, then this inequality holds trivially
(in this case we have $d_T(x_{i+1})=d_T(x_i)+1$). 
Otherwise, $x_{i+1}$ is a child of one of the ancestors $x_p$ of $x_i$,
thus the depth\hyp{}first search goes down to $x_p$ (which has been visited before)
after the first visit of $x_i$, and then from $x_p$ it takes one step up to reach $x_{i+1}$ for 
the first time. 
This can be seen in the Dyck path (see Remark~\ref{rem:lattice paths}) as a 
sequence of steps $D \cdots DU=D^kU$ from the point labelled by $x_i$ to the 
point labelled by $x_{i+1}$.
(For example, in Figure~\ref{fig:Dyck} we have the steps $DDU$ from the label 
$x_4$ to the label $x_5$.)
The number of down-steps here is $k=d_T(x_i)-d_T(x_p)=d_T(x_i)-d_T(x_{i+1})+1$.
Thus \eqref{eq:depth mod m} is equivalent to
$k \leq m-1$, hence \eqref{eq:zag mod m} means that any sequence of consecutive steps
of the form $D \cdots DU$ in the Dyck path can have at most $m-1$ down-steps.
\end{proof}

\begin{remark}
Proposition~\ref{prop:spectrum of C_m} implies that $s_n(\GA{C_m})$
is nondecreasing in $m$, hence $s_n(\GA{C_m}) \geq s_n(\GA{C_2}) = 2^{n-2}$ for all $m \geq 2$
(see Lemma~\ref{lem:H2}).
The results of \cite{CsaWal-2000} and \cite{HeinHuang-2017} imply that the associative 
spectrum of $\GA{C_m}$ coincides with that of the operation $x+\varepsilon y$ on complex numbers, 
where $\varepsilon$ is a primitive $m$-th root of unity. In particular, for $m=2$, we have that 
the spectrum of subtraction consists of powers of $2$ (see 3.1 in \cite{CsaWal-2000}).
\end{remark}

Now let us study digraphs on two vertices systematically.
Up to isomorphism, there are ten digraphs on two vertices; they are presented in Table~\ref{tab:2-digraphs}.
The corresponding graph algebras are three\hyp{}element groupoids, and the last column of the table indicates the Siena Catalog numbers of their isomorphism class representatives as listed in \cite{BerBur-1996}, as well as the ones of their opposite groupoids.
Of these ten digraphs, only three
are not covered by 
Proposition~\ref{prop:associative} and Theorem~\ref{thm:antiass} (i.e., that are
neither associative nor antiassociative): the undirected path of length one,
the directed path of length one, and the directed path of length one with a loop on the
first vertex. 
The first two ones are special cases of Theorem~\ref{thm:main-undirected}
and Proposition~\ref{prop:spectrum of path}, respectively. 
We treat the third one in Proposition~\ref{prop:spectrum of @-->O}, and for that we need to
investigate an equivalence relation on DFS trees determined by their leaves.

\begin{table}
\newcommand{\PICbox}{\useasboundingbox (-0.7,-0.4) rectangle (1.7,0.4);}
\tikzset{every node/.style={circle,draw,inner sep=1.5,fill=black}, every path/.style={->,>=stealth}}
\begin{tabular}{cccc}
\toprule
$G$ & $s_n(\GA{G})$ & result & Siena Catalog \\
\midrule
\begin{tikzpicture}[scale=1, baseline=(0.south)]
\PICbox
\node (0) at (0,0) {};
\node (1) at (1,0) {};
\end{tikzpicture}
& $1$ & Thm.\ \ref{thm:main-undirected}, Prop.\ \ref{prop:associative} & 1 \\
\begin{tikzpicture}[scale=1, baseline=(0.south)]
\PICbox
\node (0) at (0,0) {};
\node (1) at (1,0) {};
\path (1) edge[out=35,in=325,looseness=18] (1);
\end{tikzpicture}
& $1$ & Thm.\ \ref{thm:main-undirected}, Prop.\ \ref{prop:associative} & 3 \\
\begin{tikzpicture}[scale=1, baseline=(0.south)]
\PICbox
\node (0) at (0,0) {};
\node (1) at (1,0) {};
\path (0) edge[out=215,in=145,looseness=18] (0);
\path (1) edge[out=35,in=325,looseness=18] (1);
\end{tikzpicture}
& $1$ & Thm.\ \ref{thm:main-undirected}, Prop.\ \ref{prop:associative} & 80 \\
\begin{tikzpicture}[scale=1, baseline=(0.south)]
\PICbox
\node (0) at (0,0) {};
\node (1) at (1,0) {};
\path (0) edge (1);
\end{tikzpicture}
& $2$ & Prop.\ \ref{prop:spectrum of path} & 7, 4 \\
\begin{tikzpicture}[scale=1, baseline=(0.south)]
\PICbox
\node (0) at (0,0) {};
\node (1) at (1,0) {};
\path (0) edge (1);
\path (0) edge[out=215,in=145,looseness=18] (0);
\end{tikzpicture}
& $2^{n-2}$ & Prop.\ \ref{prop:spectrum of @-->O} & 9, 55 \\
\begin{tikzpicture}[scale=1, baseline=(0.south)]
\PICbox
\node (0) at (0,0) {};
\node (1) at (1,0) {};
\path (0) edge (1);
\path (1) edge[out=35,in=325,looseness=18] (1);
\end{tikzpicture}
& $1$ & Prop.\ \ref{prop:associative} & 29, 6 \\
\begin{tikzpicture}[scale=1, baseline=(0.south)]
\PICbox
\node (0) at (0,0) {};
\node (1) at (1,0) {};
\path (0) edge (1);
\path (0) edge[out=215,in=145,looseness=18] (0);
\path (1) edge[out=35,in=325,looseness=18] (1);
\end{tikzpicture}
& $C_{n-1}$ & Thm.\ \ref{thm:antiass} & 84, 82 \\
\begin{tikzpicture}[scale=1, baseline=(0.south)]
\PICbox
\node (0) at (0,0) {};
\node (1) at (1,0) {};
\path (0) edge[<->] (1);
\end{tikzpicture}
& $2^{n-2}$ & Thm.\ \ref{thm:main-undirected} & 33, 56 \\
\begin{tikzpicture}[scale=1, baseline=(0.south)]
\PICbox
\node (0) at (0,0) {};
\node (1) at (1,0) {};
\path (0) edge[<->] (1);
\path (1) edge[out=35,in=325,looseness=18] (1);
\end{tikzpicture}
& $C_{n-1}$ & Thm.\ \ref{thm:main-undirected}, Thm.\ \ref{thm:antiass} & 35, 58 \\
\begin{tikzpicture}[scale=1, baseline=(0.south)]
\PICbox
\node (0) at (0,0) {};
\node (1) at (1,0) {};
\path (0) edge[<->] (1);
\path (0) edge[out=215,in=145,looseness=18] (0);
\path (1) edge[out=35,in=325,looseness=18] (1);
\end{tikzpicture}
& $1$ & Thm.\ \ref{thm:main-undirected}, Prop.\ \ref{prop:associative} & 107, 128 \\
\bottomrule 
\end{tabular}

\bigskip
\caption{Digraphs on two vertices and their associative spectra.}
\label{tab:2-digraphs}
\end{table}

Let $T$ and $T'$ be DFS trees on $n$ vertices.
We say that $T$ and $T'$ are \emph{leaf\hyp{}equivalent} if they have the same set of leaves.

\begin{lemma}
\label{lem:leaf-equivalence}
For $n > 1$, the number of leaf\hyp{}equivalence classes of DFS trees on $n$ vertices is $2^{n-2}$.
\end{lemma}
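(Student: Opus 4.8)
The plan is to reduce the problem to counting depth sequences, using the bijection between DFS trees and zag sequences from Propositions~\ref{prop:unique-depth-seq} and~\ref{prop:depth-zag-seq}, and to read off the leaf set of a tree directly from its depth sequence $(d_1,\dots,d_n)$, where $d_i := d_T(x_i)$.

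First I would establish a purely local characterization of leaves. Since the vertex set of each rooted induced subtree $T_{x_i}$ is an interval $X_{[i,i']}$ by Definition~\ref{def:DFS}, the smallest-indexed child of $x_i$ (if $x_i$ has any child) must be $x_{i+1}$; and $x_{i+1}$ is a child of $x_i$ precisely when $d_{i+1} = d_i + 1$ (otherwise the parent of $x_{i+1}$ would be strictly shallower than $x_i$). Combining these, for every $i \in \{1,\dots,n-1\}$ the vertex $x_i$ is a leaf if and only if $d_{i+1} \leq d_i$, while $x_n$ is always a leaf. In particular, because every zag sequence satisfies $d_1 = 0$ and $d_2 = 1$, the root $x_1$ is never a leaf when $n > 1$, and $x_n$ is always a leaf; hence the leaf set of any DFS tree is a subset of $\{x_2,\dots,x_n\}$ that contains $x_n$.

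Next I would set up the count via an indicator map. To each DFS tree $T$ I associate the vector $(\ell_2,\dots,\ell_{n-1}) \in \{0,1\}^{n-2}$ with $\ell_i = 1$ iff $x_i$ is a leaf. By the previous paragraph the status of $x_1$ and $x_n$ is fixed, so two trees are leaf-equivalent exactly when they yield the same vector, and it remains to show that every vector in $\{0,1\}^{n-2}$ is attained. This surjectivity is the only substantive step. Given a target vector, let $A \subseteq \{1,\dots,n-1\}$ be the set of positions where $x_i$ should be a \emph{non}-leaf, noting that $1 \in A$. I would build a zag sequence by setting $d_1 := 0$ and, for $i = 1,\dots,n-1$, taking $d_{i+1} := d_i + 1$ when $i \in A$ and $d_{i+1} := 1$ when $i \notin A$. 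The point to verify is that this is a legitimate zag sequence realizing the prescribed pattern: the ascents make the corresponding vertices non-leaves, and at every non-ascent position we have $i \geq 2$ (because $1 \in A$), hence $d_i \geq 1$, so $d_{i+1} = 1$ satisfies $1 \leq d_{i+1} \leq d_i$ and makes $x_i$ a leaf. By Proposition~\ref{prop:depth-zag-seq} this zag sequence is the depth sequence of a DFS tree whose indicator vector is exactly the prescribed one.

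Finally I would conclude that $T \mapsto (\ell_2,\dots,\ell_{n-1})$ descends to a bijection between leaf-equivalence classes of DFS trees on $n$ vertices and $\{0,1\}^{n-2}$, giving the count $2^{n-2}$. The main obstacle is really just the leaf characterization in the first paragraph (connecting the tree structure to the depth sequence), together with the small but essential observation in the surjectivity step that $d_i \geq 1$ for all $i \geq 2$, which guarantees that a valid level or downward step is always available at a non-ascent position; the rest is bookkeeping.
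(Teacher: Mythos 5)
Your proof is correct and takes essentially the same approach as the paper: both arguments observe that a leaf set is exactly a subset of $X_n$ containing $x_n$ but not $x_1$, and then realize every such subset, giving the count $2^{n-2}$. Your zag\hyp{}sequence construction (resetting $d_{i+1}:=1$ at each prescribed leaf position and otherwise incrementing) is precisely the depth sequence of the paper's witness tree --- the paths from $x_1$ to each prescribed leaf, disjoint except at the root --- so the only real difference is that you verify the leaf criterion $x_i$ is a leaf iff $d_{i+1}\leq d_i$ explicitly via Propositions~\ref{prop:unique-depth-seq} and~\ref{prop:depth-zag-seq}, where the paper constructs the tree directly and treats its leaf set as evident.
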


\begin{proof}
The set of leaves of a DFS tree on $n$ vertices is a subset of $X_n$ that does not contain the root $x_1$, but it always contains $x_n$.
On the other hand, it is easy to see that for every subset $S = \{x_{i_1}, x_{i_2}, \dots, x_{i_r}\} \subseteq X_n$ with $1 < i_1 < i_2 < \dots < i_r = n$, there exists a DFS tree whose leaves are precisely the elements of $S$.
For example, we can take the tree
comprising just the paths from $x_1$ to each $x_{i_j} \in S$ that are disjoint except for the initial vertex:
$x_1 \rightarrow x_2 \rightarrow \dots \rightarrow x_{i_1}$
and
$x_1 \rightarrow x_{i_{j-1} + 1} \rightarrow \dots \rightarrow x_{i_j}$ for $2 \leq j \leq r$.
The number of subsets of $X_n$ containing $x_n$ but not containing $x_1$ is $2^{n-2}$.
\end{proof}

\begin{proposition}
\label{prop:spectrum of @-->O}
The associative spectrum $s_n$ of the graph algebra corresponding to the graph $G$ given by 
$V(G)=\{v,w\},~E(G)=\{(v,v),(v,w)\}$ is $s_n=2^{n-2}$.
\end{proposition}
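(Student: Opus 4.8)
The plan is to apply Proposition~\ref{prop:phi}, which reduces the question of which bracketing identities $\GA{G}$ satisfies to comparing, for the associated DFS trees, their homomorphisms into $G$. So the first step is to describe, for an arbitrary DFS tree $T$ of size $n \geq 2$, all homomorphisms $\varphi \colon T \to G$. The crucial feature of $G$ is that $v$ is its only vertex with an outgoing edge, and from $v$ one may pass to either $v$ or $w$, while $w$ is a sink. Consequently, any vertex of $T$ that has a child must be mapped to $v$, whereas a leaf of $T$ is never the tail of an edge of $T$ and is thus subject to no such constraint. I would then check the converse: every map sending all internal vertices to $v$ and each leaf arbitrarily to $v$ or $w$ is a homomorphism, since each edge of $T$ runs from an internal vertex (mapped to $v$) to a vertex mapped into $\{v,w\}$, and both $(v,v)$ and $(v,w)$ are edges of $G$. (For $n \geq 2$ the root $x_1$ has children and is therefore internal, so this description accounts for every vertex of $X_n$.)

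Thus the homomorphisms $T \to G$ are exactly the maps $\varphi$ with $\varphi^{-1}(w) \subseteq L(T)$, where $L(T)$ denotes the set of leaves of $T$; equivalently, they are parameterized bijectively by the subsets of $L(T)$ via $\varphi \mapsto \varphi^{-1}(w)$. Since every bracketing of size $n$ has variable set $X_n$ and leftmost variable $x_1$, the hypotheses of Proposition~\ref{prop:phi} are automatically satisfied, so $\GA{G}$ satisfies $t \approx t'$ precisely when $T := G(t)$ and $T' := G(t')$ have the same set of homomorphisms into $G$. By the description above this amounts to the equality $\{S : S \subseteq L(T)\} = \{S : S \subseteq L(T')\}$, which, testing on $S = L(T)$ and on $S = L(T')$, holds if and only if $L(T) = L(T')$; that is, if and only if $T$ and $T'$ are leaf\hyp{}equivalent.

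Hence $\sigma_n(\GA{G})$ is exactly the leaf\hyp{}equivalence relation on $B_n$, and $s_n(\GA{G})$ equals the number of leaf\hyp{}equivalence classes of DFS trees on $n$ vertices, which is $2^{n-2}$ by Lemma~\ref{lem:leaf-equivalence}. The argument presents no genuine obstacle: its entire content lies in correctly identifying the homomorphisms, and because $G$ is so simple (a single source carrying a loop, together with a single sink) this collapses cleanly to tracking the leaf set. The only point demanding a moment's care is verifying that every leaf is truly free to map to either vertex and that the root is always internal for $n \geq 2$, so that the leaf set alone determines the homomorphism set and the answer reduces to the count supplied by Lemma~\ref{lem:leaf-equivalence}.
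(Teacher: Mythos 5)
Your proof is correct and follows the same route as the paper: you characterize the homomorphisms $T \to G$ as exactly the maps sending non-leaves to $v$ (equivalently, $\varphi^{-1}(w)$ contained in the leaf set), deduce via Proposition~\ref{prop:phi} that $\GA{G}$ satisfies $t \approx t'$ if and only if $G(t)$ and $G(t')$ are leaf\hyp{}equivalent, and conclude with Lemma~\ref{lem:leaf-equivalence}. Your added verifications (that each leaf is genuinely free and that the leaf set determines the homomorphism set, tested on $S = L(T)$ and $S = L(T')$) are exactly the details the paper leaves implicit.
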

\begin{proof}
For any DFS tree $T$ of size $n$, a map $\varphi\colon X_n\to \{v,w\}$ is a homomorphism 
of $T$ into $G$ if and only if all vertices that are mapped to $w$ are leaves in $T$.
Therefore, $\GA{G}$ satisfies a bracketing identity $t\approx t'$
if and only if the corresponding trees $T:=G(t)$ and $T':=G(t')$ are leaf\hyp{}equivalent.
Now Lemma~\ref{lem:leaf-equivalence} implies that $s_n=2^{n-2}$.
\end{proof}

Finally, we consider some graphs on three vertices.

\begin{proposition}
\label{prop:spectrum of @<--O-->@}
The associative spectrum of the graph algebra corresponding to the graph $G$ given by 
$V(G)=\{u,v,w\},~E(G)=\{(u,v),(v,v),(u,w),(w,w)\}$ is
$s_n(\GA{G})=2^{n-2}$.
\end{proposition}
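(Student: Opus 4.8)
The plan is to apply Proposition~\ref{prop:phi}: any two bracketings $t, t' \in B_n$ satisfy $\var(t) = \var(t') = X_n$ and $L(t) = L(t') = x_1$, so $\GA{G}$ satisfies $t \approx t'$ if and only if the DFS trees $T := G(t)$ and $T' := G(t')$ admit exactly the same homomorphisms into $G$. I would therefore first describe all homomorphisms $\varphi \colon X_n \to \{u,v,w\}$ of an arbitrary DFS tree $T$ into $G$, then read off the invariant of $T$ that these encode, and finally count its possible values. This mirrors the strategy used in Proposition~\ref{prop:spectrum of @-->O}, but with a different invariant.

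First I would analyse the homomorphisms using the local structure of $G$: the vertex $u$ has outneighbours $v$ and $w$ but no inneighbour, whereas $v$ and $w$ each emit only a loop. Consequently no non\hyp{}root vertex of $T$ can be mapped to $u$ (its parent would have to be an inneighbour of $u$), and as soon as some vertex is mapped to $v$ (resp.\ $w$) its entire subtree is forced to be constant $v$ (resp.\ $w$). Hence precisely three kinds of homomorphism occur: the two constant maps onto $v$ and onto $w$, and the maps sending the root $x_1$ to $u$ and colouring each subtree hanging off the root monochromatically by $v$ or $w$, the colours being chosen independently across subtrees.

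Next I would extract the invariant. The two constant maps lie in the homomorphism set of every $T$, so they carry no information; the remaining homomorphisms are in bijection with the two\hyp{}colourings of the blocks of the partition $\mathcal{P}(T)$ of $X_n \setminus \{x_1\}$ into the vertex sets of the root's child subtrees. Since any partition is recovered from its family of constant\hyp{}on\hyp{}blocks two\hyp{}valued functions (two elements lie in the same block exactly when no such function separates them, and two values already suffice to separate distinct blocks), the homomorphism set of $T$ both determines and is determined by $\mathcal{P}(T)$, equivalently by the set of children of the root, i.e.\ the depth\hyp{}$1$ vertices. Thus $\GA{G}$ satisfies $t \approx t'$ if and only if $T$ and $T'$ have the same set of root\hyp{}children.

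Finally I would count the equivalence classes. In any DFS tree $x_2$ is a child of the root, and by the interval structure in Definition~\ref{def:DFS} the set of root\hyp{}children is an arbitrary subset of $\{x_2, \dots, x_n\}$ containing $x_2$; conversely every such subset is realised, for instance by the tree consisting of disjoint paths hanging off the root, one per interval. Hence the number of classes equals the number of subsets of $\{x_3, \dots, x_n\}$, namely $2^{n-2}$, giving $s_n(\GA{G}) = 2^{n-2}$. The local case analysis and the final count are routine; the one point I would write out most carefully, and which I expect to be the main obstacle, is the bidirectional correspondence of the previous paragraph between homomorphism sets and root\hyp{}subtree partitions, since that is where the equivalence relation on $B_n$ is genuinely pinned down.
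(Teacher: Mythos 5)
Your proposal is correct and takes essentially the same approach as the paper: the paper's proof classifies the homomorphisms identically (the two constant maps plus the maps sending $x_1$ to $u$ and colouring each root subtree monochromatically by $v$ or $w$) and likewise pins the equivalence down to the trees having the same depth\hyp{}one vertices, i.e.\ the same partition of $\{x_2,\dots,x_n\}$ into root\hyp{}subtree intervals. The only difference is the final enumeration: the paper phrases the condition as $\Ltt \geq 1$ and invokes Lemma~\ref{lem:level equivalence} to obtain $s_n(\GA{G}) = \thn{2}{n} = 2^{n-2}$, whereas you count directly the realizable sets of root children (subsets of $\{x_2,\dots,x_n\}$ containing $x_2$), which is an equally valid, self\hyp{}contained way to conclude.
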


\begin{proof}
For any DFS tree $T$ of size $n$, a map $\varphi\colon X_n \to \{u,v,w\}$ is a homomorphism of 
$T$ into $G$ if and only if either $\varphi(X_n)=v$ or $\varphi(X_n)=w$, or $\varphi(x_1)=u$ 
and all other vertices are mapped to $\{v,w\}$ in such a way, that if a vertex of depth one 
is mapped to $v$ (to $w$), then all of its descendants are also mapped to $v$ (to $w$):
\[
\forall p \in X_n\colon d_{T}(p)=1 \implies \varphi(V(T_p))=\{\varphi(p)\} \in\{\{v\},\{w\}\}.
\]
Thus the set of all homomorphisms of $T$ into $G$ is determined by the
partition $\{V(T_p) \mid p\in X_n\text{ and } d_{T}(p)=1\}$ 
of the set $\{x_2,\dots,x_n\}$. This partition is in turn determined uniquely 
by the set of depth-one vertices. Indeed, if the depth-one vertices of $T$ are 
$x_{i_1},\dots,x_{i_{s}}$ with $2=i_1<\dots<i_{s}\leq n$, then 
$V(T_{x_{i_k}})=X_{[i_k,i_{k+1}-1]}$ for $k=1,\dots,s-1$
and $V(T_{x_{i_{s}}})=X_{[i_{s},n]}$. By Proposition~\ref{prop:phi}, this implies that 
$\GA{G}$ satisfies a bracketing identity $t\approx t'$ if and only
if $\Ltt \geq 1$. Therefore, by Lemma~\ref{lem:level equivalence} 
we have $s_n(\GA{G})=\thn{2}{n}=2^{n-2}$.
\end{proof}

\begin{proposition}
\label{prop:spectrum of O-->@<-->@}
The associative spectrum of the graph algebra corresponding to the graph $G$ given by 
$V(G)=\{u,v,w\},~E(G)=\{(u,v),(v,v),(v,w),(w,v),(w,w)\}$ is $s_n(\GA{G})=2^{n-2}$.
\end{proposition}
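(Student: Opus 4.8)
The plan is to follow the same strategy as in the proof of Proposition~\ref{prop:spectrum of @<--O-->@}: first determine, for an arbitrary DFS tree $T$ of size $n$, exactly which maps $\varphi \colon X_n \to V(G)$ are homomorphisms of $T$ into $G$; then read off from this description when two bracketings induce the same term operation; and finally count the resulting equivalence classes via Lemma~\ref{lem:level equivalence}.

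First I would record the structure of $G$. The vertices $v$ and $w$ together with the four edges $(v,v),(v,w),(w,v),(w,w)$ form a complete graph with loops (a $1$\hyp{}whirl), so \emph{every} map of a digraph into the induced subgraph on $\{v,w\}$ is a homomorphism. The vertex $u$, on the other hand, is a source: it has no inneighbour and its only outneighbour is $v$, i.e.\ $\Nout[G]{u} = \{v\}$. Since $u$ has no inneighbour, the only vertex of $T$ that a homomorphism can send to $u$ is the root $x_1$. I would then split according to the value $\varphi(x_1)$. If $\varphi(x_1) \in \{v,w\}$, then since all outneighbours of $v$ and of $w$ lie in $\{v,w\}$, an easy induction on depth shows that the image of $\varphi$ is contained in $\{v,w\}$; conversely every map $X_n \to \{v,w\}$ is a homomorphism because $\{v,w\}$ is complete with loops. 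If $\varphi(x_1) = u$, then each child of $x_1$ must be sent into $\Nout[G]{u} = \{v\}$, so every depth\hyp{}one vertex is mapped to $v$; and once we are inside $\{v,w\}$ there are no further constraints, so every vertex of depth at least $2$ may be mapped freely to $v$ or $w$.

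From this description the set of homomorphisms of $T$ into $G$ splits into two parts: the maps with image in $\{v,w\}$, which are exactly all $2^n$ maps $X_n \to \{v,w\}$ and therefore do not depend on $T$ at all; and the maps with $\varphi(x_1) = u$, which are precisely those sending the depth\hyp{}one vertices to $v$ and the remaining vertices arbitrarily into $\{v,w\}$. Only the second family depends on $T$, and it depends on $T$ solely through the set of depth\hyp{}one vertices. Consequently, two DFS trees $T$ and $T'$ have the same homomorphisms into $G$ if and only if they have the same set of depth\hyp{}one vertices, which is exactly the condition $\Ltt \geq 1$ (the trees coincide up to level $1$). By Proposition~\ref{prop:phi}, $\GA{G}$ therefore satisfies a bracketing identity $t \approx t'$ if and only if $\Ltt \geq 1$.

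Finally I would invoke Lemma~\ref{lem:level equivalence} with $h = 1$, which counts the equivalence classes of the relation ``coincide up to level $1$'' as $\thn{2}{n} = 2^{n-2}$, giving $s_n(\GA{G}) = 2^{n-2}$. The only point requiring genuine care is the case $\varphi(x_1)=u$: one must check that the sole constraint imposed is on the depth\hyp{}one vertices and that no constraint propagates to deeper levels, which is precisely where the completeness (with loops) of the subgraph on $\{v,w\}$ is used. Everything else is bookkeeping that parallels Proposition~\ref{prop:spectrum of @<--O-->@}.
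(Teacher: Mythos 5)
Your proposal is correct and follows essentially the same route as the paper's own proof: you characterize the homomorphisms of a DFS tree $T$ into $G$ (either image in the complete-with-loops subgraph on $\{v,w\}$, or $x_1 \mapsto u$ with depth-one vertices forced to $v$ and deeper vertices free), observe that the homomorphism set depends on $T$ only through its set of depth-one vertices, translate this via Proposition~\ref{prop:phi} into the condition $\Ltt \geq 1$, and count classes with Lemma~\ref{lem:level equivalence} to get $\thn{2}{n} = 2^{n-2}$. Your write-up is in fact slightly more detailed than the paper's (which delegates the final step to the argument of Proposition~\ref{prop:spectrum of @<--O-->@}), e.g.\ in justifying that only the root can map to $u$ and that the homomorphism set conversely determines the depth-one vertex set.
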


\begin{proof}
For any DFS tree $T$ of size $n$, a map $\varphi\colon X_n\to \{u,v,w\}$ is a homomorphism 
of $T$ into $G$ if and only if either $\varphi(X_n)\subseteq\{v,w\}$, or $\varphi(x_1)=u$, 
all depth-one vertices are mapped to $v$, and the other vertices are mapped to $\{v,w\}$ 
in an arbitrary way. Thus the set of all homomorphisms of $T$ into $G$ is determined uniquely
by the set of depth-one vertices. Therefore, just as in the previous proposition, we can conclude 
$s_n(\GA{G})=\thn{2}{n}=2^{n-2}$ with the help of Lemma~\ref{lem:level equivalence}.
\end{proof}

\begin{remark}
The graph algebra of the directed path of length one with loops on both vertices is isomorphic to the three\hyp{}element groupoid with Siena Catalog number 84 and antiisomorphic to the one with number 82 (see \cite{BerBur-1996}).
These groupoids were shown in \cite[statements 2.4, 5.7]{CsaWal-2000} to be antiassociative;
this result also follows immediately from our Lemma~\ref{lem:no-SCC-SCC}.
\end{remark}

\section*{Acknowledgments}

The authors would like to thank Mikl\'{o}s Mar\'{o}ti and Nikolaas Verhulst for helpful discussions.
This work was partially supported by the Funda\c{c}\~ao para a Ci\^encia e a Tecnologia (Portuguese Foundation for Science and Technology) through the project UID/MAT/00297/2019 (Centro de Matem\'atica e Aplica\c{c}\~oes) and the project PTDC/MAT-PUR/31174/2017.
Research partially supported by the Hungarian Research, Development and Innovation Office grant K115518 and by the Ministry of Human Capacities, Hungary grant 20391-3/2018/FEKUSTRAT.

\end{document}